  \tikzstyle{block} = [draw=black, ultra thin, text width=2cm, minimum height=1.3cm, font = {\footnotesize\itshape},align=center]
  \tikzstyle{arrow} = [thick,->,>=stealth]
  \renewenvironment{quote}
  {\list{}{\leftmargin=0.5cm \rightmargin=0.5cm} 
   \item\relax}
  {\endlist}
\newcommand{\ind}[2][]{%
  \mathrel{
    \mathop{
      \vcenter{
        \hbox{\oalign{\noalign{\kern-.3ex}\hfil$\vert$\hfil\cr
              \noalign{\kern-.7ex}
              $\smile$\cr\noalign{\kern-.3ex}}}
      }
    }^{#2}\displaylimits_{#1}
  }
}
\newcommand{\eqdef}{\coloneqq}
\newcommand{\distFO}{\textrm{\upshape dist-FO}\xspace}
\DeclareMathOperator{\VCdim}{VCdim}
\DeclareMathOperator{\fw}{fw}
\DeclareMathOperator{\wcol}{wcol}
\DeclareMathOperator{\wreach}{WReach}
\newlength{\leftbarwidth}
\newlength{\leftbarsep}
\renewenvironment{leftbar}[1][blue]
{%
\MakeFramed{\hsize\hsize\advance\hsize-\width\FrameRestore}%
}
{\endMakeFramed}
\newcommand\myitem[1]{\hyperref[item:#1]{\emph{\ref*{item:#1}.}{}}\xspace}
\newtheorem{theorem}{Theorem}[section]
\newtheorem*{theorem*}{Theorem}
\newtheorem{conjecture}[theorem]{Conjecture}
\newtheorem{corollary}[theorem]{Corollary}
\newtheorem*{corollary*}{Corollary}
\newtheorem{lemma}[theorem]{Lemma}
\newtheorem*{lemma*}{Lemma}
\newtheorem{fact}[theorem]{Fact}
\newtheorem{observation}[theorem]{Observation}
\newtheorem*{proposition*}{Proposition}
\newtheorem{claim}{Claim}[section]
\crefname{claim}{claim}{Claims} 
\Crefname{claim}{Claim}{Claims} 
\newenvironment{claimproof}[1][\proofname]{%
  \begin{proof}[#1]%
}{%
  \end{proof}%
}
\theoremstyle{remark}
\newtheorem{remark}[theorem]{Remark}
\newtheorem{example}[theorem]{Example}
\newtheorem{definition}[theorem]{Definition}
\def\Nesetril{Ne\v{s}et\v{r}il\xspace}
\def\Dvorak{Dvo\v{r}\'{a}k\xspace}
\def\Kral{Kr\'{a}l\xspace}
\newcommand{\str}[1]{{{#1}}}
\newcommand{\from}{\colon}
\newcommand{\set}[1]{\{#1\}}
\newcommand{\setof}[2]{\set{#1\mid#2}}
\def\phi{\varphi}
\def\cal{\mathcal}
\def\N{\mathbb N}
\def\R{\mathbb R}
\def\epsilon{\varepsilon}
\def\eps{\varepsilon}
\renewcommand{\subset}{\subseteq}
\renewcommand{\setminus}{-}
\renewcommand{\le}{\leqslant}
\renewcommand{\ge}{\geqslant}
\newcommand{\dist}{\mathrm{dist}}
\newcommand{\CC}{\cal C}
\newcommand{\DD}{\cal D}
\DeclareMathOperator{\mw}{mw}
\newcommand\reach{{\rm reach}}
\newcommand{\tup}{\bar}
\newcommand{\tp}{\textnormal{tp}}
\newcommand{\ltp}{\textnormal{ltp}}
\newcommand{\stp}{\textnormal{stp}}
\newcommand{\anonym}[2][]{#2}
\newcommand{\ERCagreement}{\xspace ST received funding from the European Research Council (ERC) (grant agreement №948057 -- {\sc bobr} -- and №101126229 -- {\sc buka}).
\begin{tikzpicture}[remember picture, overlay]
  \coordinate (refpoint) at (12,0); 

  \node[anchor=south, yshift=0cm] at (refpoint)
  {\includegraphics[width=40px]{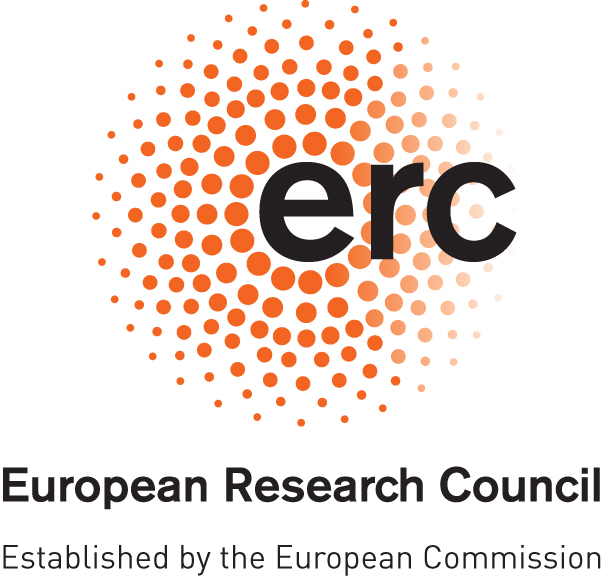}};

  \node[anchor=south, yshift=-2cm] at (refpoint)
  {\includegraphics[width=60px]{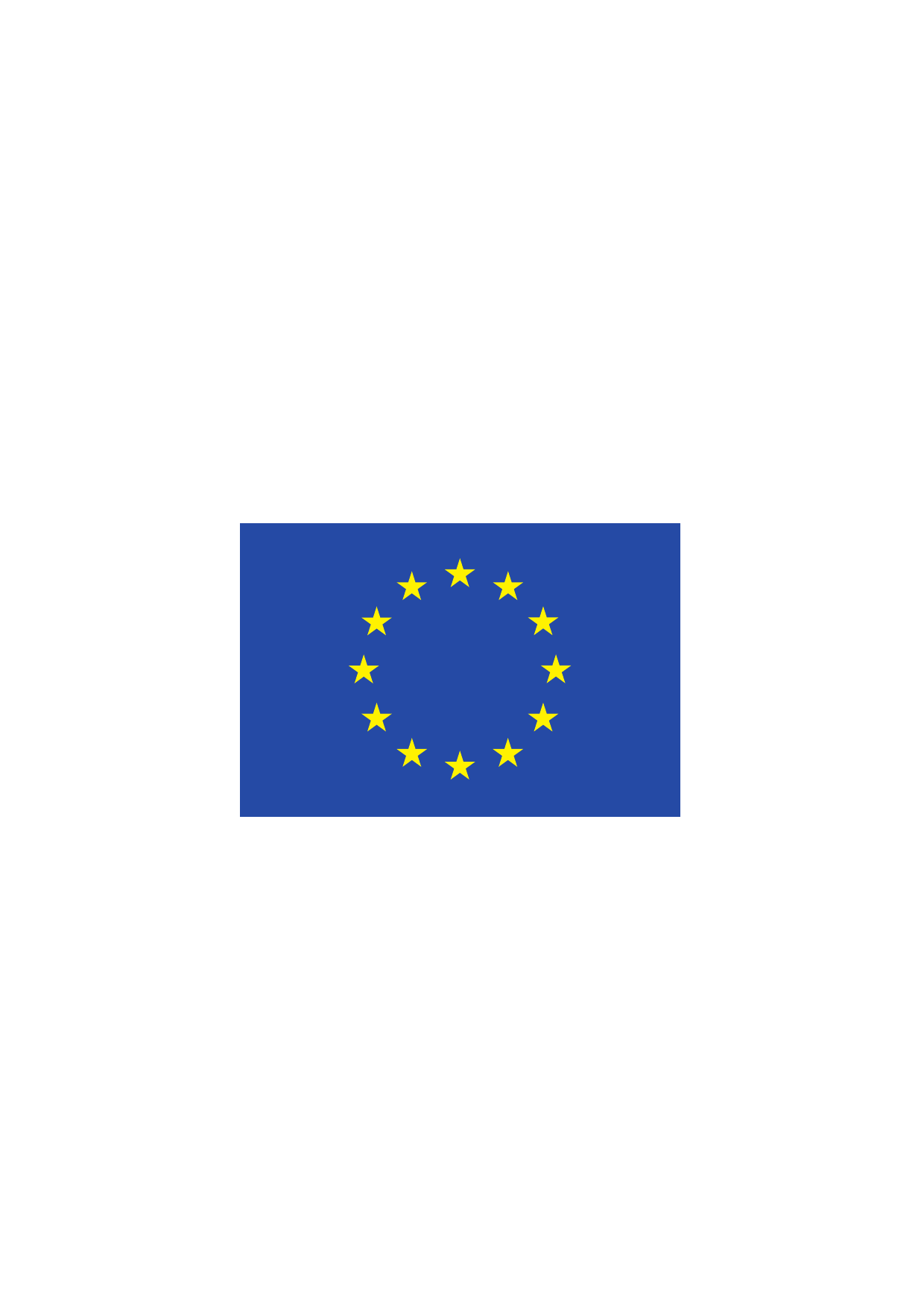}};
\end{tikzpicture}
}
\begin{document}

\title{\vspace{-1.5em}Merge-width and First-Order Model Checking}

\author{{Jan Dreier and Szymon Toru\'nczyk}}

  \date{~}
\maketitle

\vspace{-4em}
\begin{abstract}
We introduce \emph{merge-width}, a family of graph parameters that unifies several structural graph measures, including treewidth, degeneracy, twin-width, clique-width, and generalized coloring numbers.
Our parameters are based on new decompositions called \emph{construction sequences}.
These are sequences of ever coarser partitions of the vertex set, where each pair of parts has a specified default connection, and all  vertex pairs of the graph that differ from the default are marked as \emph{resolved}.
 The \emph{radius-$r$ merge-width} is the maximum number of parts reached from a vertex by following a path of at most $r$ resolved pairs.
 Graph classes of \emph{bounded merge-width}
 -- for which the radius-\(r\) merge-width parameter can be bounded by a constant, for each fixed \(r=1,2,3,\ldots\) --  include all classes of bounded expansion or of bounded twin-width, thus unifying two central notions from the Sparsity and Twin-width frameworks. Furthermore, they are preserved under first-order transductions, which attests to their robustness.
We conjecture that classes of bounded merge-width are equivalent to the previously introduced classes of bounded flip-width.

As our main result, we show that the model checking problem for first-order logic is fixed-parameter tractable on graph classes of bounded merge-width, assuming the input includes a witnessing construction sequence. This unites and extends two previous model checking results: the result of \Dvorak, \Kral, and Thomas for classes of bounded expansion, and the result of Bonnet, Kim, Thomass\'e, and Watrigant for classes of bounded twin-width.
 Finally, we suggest future research directions that could impact the study of structural and algorithmic graph theory, in particular of monadically dependent graph classes, which we conjecture to coincide with classes of \emph{almost bounded merge-width}.
\end{abstract}

\paragraph{Acknowledgements.}
We are grateful to Jakub Gajarsk{\'{y}}, Nikolas M\"ahlmann, Rose McCarty, Jakub No\-wa\-ko\-wski, Pierre Ohlmann,  Michał Pilipczuk, and Wojciech Przybyszewski
for many inspiring discussions. We also thank the anonymous reviewers for numerous useful comments.\ERCagreement

\vspace{-1em}

\section{Introduction}\label{sec:intro}
Several graph parameters in structural graph theory -- in particular, treewidth, degeneracy, and twin-width -- have proven exceptionally effective in both algorithmic and combinatorial contexts.
The study of these parameters has led to a wealth of related measures,
such as generalized coloring numbers (extensions of degeneracy that detect distances up to a fixed radius),
as well as clique-width and rank-width (two closely related extensions of treewidth beyond the realm of sparse graphs).

\medskip
What do all those parameters have in common?
\medskip

They all describe the existence
of graph decompositions of some sort, which can be then
utilized for algorithms and inductive proofs.
However, the decompositions underlying
degeneracy and twin-width, for example,
differ significantly in their structure, utility, and scope.
For instance, there are classes of graphs whose degeneracy is bounded by a constant,
but which have unbounded twin-width, and vice versa.

\smallskip
Ever since the introduction of twin-width, it has been anticipated that twin-width and degeneracy may have some common explanation and a common generalization.
Already with the development
of Sparsity by \Nesetril and Ossona de Mendez,
it has been expected
that its central notions -- of which degeneracy is a prime example -- might have extensions which are suitable beyond the sparse realm, similar to how treewidth is extended by clique-width and rank-width.
Such notions could enable the
analysis of a broader variety of graphs,
and lead to the
unification of Sparsity with Twin-width.
Specifically, two central notions studied in those frameworks are
graph classes of \emph{bounded expansion} -- for which degeneracy, and also each of the generalized coloring numbers is bounded by a constant  --
 and, respectively, graph classes of \emph{bounded twin-width}.
Graph classes of bounded expansion include e.g.\
every  class of bounded maximum degree,
the class of planar graphs,
and every class which excludes some graph as a minor, or even as a topological minor. These graph classes are all sparse: the number of edges is linear in the number of vertices in all graphs from the class.
Graph classes of bounded twin-width
include e.g. every class which excludes some graph as a minor, and also non-sparse graph classes, such as classes of bounded clique-width, proper hereditary classes of permutation graphs, or the class of unit interval graphs. However, some of the simplest classes of bounded expansion -- classes
of bounded maximum degree -- have unbounded twin-width \cite{tww2}. Hence, these two notions are incomparable.
Another central notion studied in Sparsity are \emph{nowhere dense} classes, with coloring numbers bounded by $n^{o(1)}$, for $n$-vertex graphs in the class, rather than $O(1)$. This notion is conceptually similar to, and more general than bounded expansion, and is
discussed at the end of this section.
\Cref{fig:diagram} reviews the different properties of graph classes discussed in this paper.

\paragraph{First-order model checking.}
Classes of bounded twin-width and of bounded expansion share many similarities.
Most notably, similar algorithmic
problems can be efficiently solved on those graph classes, including all problems expressible
in first-order logic.
More precisely, we say that the model checking problem for first-order logic \emph{is fixed-parameter tractable} on a graph class $\CC$ if for every first-order sentence~$\phi$
there is an algorithm which determines whether a given $n$-vertex graph $G\in\CC$ satisfies $\phi$
in time
$c\cdot n^d$, for some constant $c$ which may depend both on $\phi$ and on $\CC$,
and for some exponent $d$ depending on $\CC$ only. Algorithms of this form are known as \emph{algorithmic meta-theorems}, as they establish the fixed-parameter tractability of an entire family of graph problems,  namely those expressible in first-order logic.
This includes the independent set problem, the clique problem, the dominating set problem, and many others.

The model checking problem is fixed-parameter tractable for all graph classes of bounded expansion, by the result of \Dvorak, \Kral, and Thomas \cite{DvorakKT13-journal}.
This also holds for classes of bounded twin-width, by the result of Bonnet, Kim, Thomass\'e, and Watrigant \cite{tww1} -- with the additional requirement that the input graph $G$ is provided together with
a \emph{contraction sequence} witnessing that $G$ has twin-width bounded by a constant.
It remains unknown if there is a polynomial-time algorithm which
computes such a suitable contraction sequence.

\medskip
\paragraph{Flip-width.}
Evidence that the aforementioned graph parameters are
indeed all related
was given by \anonym[Toruńczyk]{the second author} in \cite{flip-width}. It was observed that all parameters discussed so far
can be explained using variants of a pursuit-evasion game, similar to the classic Cops and Robber game that characterizes treewidth \cite{seymour-thomas-cops}.
This led to a
family of graph parameters, called \emph{flip-width}.
Variants of the flip-width parameters recover
 all the graph parameters listed above -- treewidth, twin-width, degeneracy, clique-width, generalized coloring numbers -- up to functional equivalence.
In turn, graph classes of \emph{bounded flip-width} -- in which all the flip-width parameters are bounded --
generalize classes of bounded expansion and classes of bounded twin-width.

Unlike parameters such as treewidth or twin-width,
the flip-width parameters are defined in terms of pursuit-evasion games
and thus do not describe
the existence of a decomposition of the considered graph.
This limits the usefulness of flip-width for algorithmic and combinatorial applications.
In particular, it is not known whether the model checking results
for classes of bounded expansion and for classes of bounded twin-width (with a given contraction sequence) extend
to classes of bounded flip-width.
It is even unknown if in classes of bounded flip-width, the pursuer has a winning strategy
-- the closest equivalent of a decomposition provided by flip-width --
which terminates after a number of rounds which is bounded by a polynomial in the number of vertices, with a fixed degree of the polynomial.

\nocite{robertson-seymour-tw}
\nocite{clique-width}
\nocite{tww2}
\nocite{tww1}
\nocite{grad-and-bounded-expansion-Nesetril}
\nocite{NesetrilM11a}
\nocite{flip-width}
\nocite{Shelah1986}
\begin{figure}
    \centering
    \includegraphics{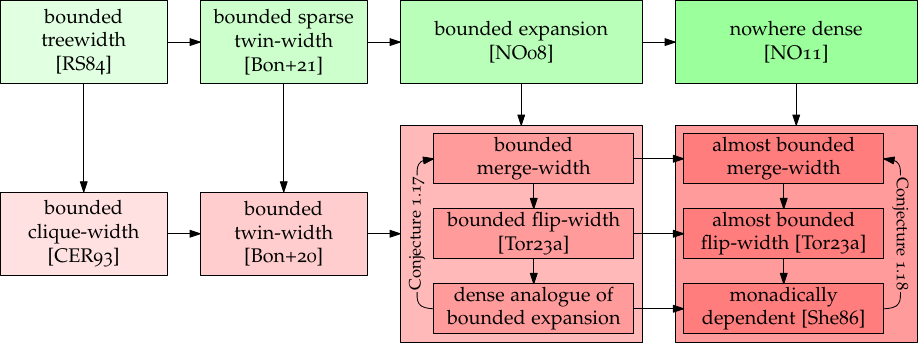}
    \newcommand{\myarrow}[1][]{\mathrel{\tikz{
    \node[fill, minimum width=.5ex, minimum height=.9em, inner sep=0pt, single arrow, single arrow head extend=1.7pt, single arrow tip angle=45]
        (A){\raisebox{3.5pt}[0pt][0pt]{$\,\scriptstyle #1\ $}}; \path([xshift=-.4pt]A.west)--(A.east);}}}
  \caption{Properties of graph classes, and implications ($\myarrow$) among them.
  Each property in the upper row implies the graph class is \emph{weakly-sparse}
  -- excludes some biclique $K_{t,t}$ as a subgraph.
  Each property in the lower row, restricted to weakly sparse graph classes, yields the property  in the upper row directly above it. Also, each property in the lower row, apart
  from almost bounded merge-width/flip-width, is known to be preserved under first-order transductions.
  Within each of the two larger red boxes, we conjecture that also the converse implications hold, and thus that the three notions are equivalent.
  }\label{fig:diagram}
\end{figure}
\subsection*{Contribution}
In this paper, we propose an answer to the question posed in the beginning of this introduction, by directly relating the
\emph{decompositions} underlying the aforementioned parameters. We introduce
\emph{construction sequences}, which are similar to the {contraction sequences} underlying twin-width.
Imposing suitable restrictions on construction sequences allows us to recover contraction sequences underlying twin-width,  tree decompositions underlying treewidth, as well as the decompositions underlying clique-width, degeneracy, and generalized coloring numbers.

As our core conceptual contribution,
we introduce
a new family of graph parameters,
dubbed \emph{\mbox{radius-$r$} merge-width}, for each radius $r\in\N$,
which measure the complexity of graphs in terms of construction sequences.

Our second contribution, and  main result, establishes
that the model checking problem for first-order logic
is fixed-parameter tractable on graph classes of bounded merge-width (for which each merge-width parameter is bounded by a constant),
assuming that the input graph is provided together with a witnessing construction sequence.
These contributions  are detailed further below. We start by defining the main concept of this paper.
\medskip
\begin{tcolorbox}[
  boxsep=2pt,
  left=2pt,
  right=2pt,
  top=2pt,
  ]
\paragraph{Merge-width.} Fix a vertex set $V$.
A \emph{construction sequence} is a sequence of steps, maintaining a partition $\cal P$~of~$V$ and a partition of  ${V\choose 2}$ into three sets:
\emph{edges} $E$, \mbox{\emph{non-edges}~$N$},
and \emph{unresolved} pairs~$U$.
 Initially, $\cal P$ partitions $V$ into singletons, and every pair in $V\choose 2$ is unresolved.
In each step, one of three operations is performed:
\begin{itemize}
  \item  \emph{merge} two parts $A,B\in\cal P$, replacing the two parts by their union $A\cup B$,
  \item \emph{resolve positively} a pair of parts $A,B\in\cal P$
  (possibly $A=B$), declaring
 all the unresolved pairs $\set{a,b}\in U$ with $a\in A,b\in B$ as \emph{edges}, that is, moving them from \(U\) to \(E\), or
 \item \emph{resolve negatively} a pair of parts --
     by declaring the corresponding unresolved vertex pairs as \emph{non-edges}, that is, moving them from \(U\) to \(N\).
\end{itemize}
In the end, we require that $\cal P$ has one part, and that every pair from ${V\choose 2}$ is resolved as either an edge or a non-edge.
We thus say this is a construction sequence of the graph $G=(V,E)$.

\quad The \emph{radius-$r$ width}
of a construction sequence is the least number $k$ such that at every step in the sequence, the following holds:
\begin{quote}
For every vertex $v\in V$, at most $k$ parts
of the current partition $\cal P$ can be reached from $v$ by a path of length  ${\le}r$ in the graph $(V,E\cup N)$ formed by the current edges and non-edges.
\end{quote}
The \emph{radius-$r$ merge-width} of a graph $G$, denoted $\mw_r(G)$, is the least radius-$r$ width of a construction sequence of $G$.
Finally, a graph class $\CC$ has  \emph{bounded merge-width} if $\mw_r(\CC)<\infty$ for all $r\in\N$, where $\mw_r(\CC)\coloneqq\sup_{G\in\CC}\mw_r(G)$.
\end{tcolorbox}
\smallskip

The definition of merge-width in terms of
\emph{construction sequences} is complemented by an equivalent but often more flexible definition via \emph{merge sequences}
 introduced later in \Cref{sec:merge-sequence}.

\begin{remark}\label{remark:maindef}
We start with three observations, regarding each step \((\cal P,E,N,U)\) of a construction sequence of a graph \(G\).

\begin{enumerate}[wide, labelwidth=!, labelindent=0pt]
  \item
        As the edge set of the last step of the construction sequence needs to be \(E(G)\), and
throughout the construction sequence, the sets $E$ and $N$ can never decrease,
        we observe that at each step, \(E \subseteq E(G)\) and \(N \subseteq {V\choose 2} \setminus E(G)\).
        Thus, when the graph \(G\) is clear from the context, at each step, we only need to specify \(\cal P\) and the set \(R\coloneqq E\cup N \subset {V \choose 2}\) of \emph{resolved pairs}.
        The partition of \(V\choose 2\) into \(E, N\), and \(U\) then follows from \(G\), as \(E=R\cap E(G), N = R \setminus E(G)\), and \(U={V\choose 2}-R\).
    \item
        For every pair of parts $A,B\in\cal P$ (including $A=B$), the unresolved pairs $\set{a,b}\in U={V\choose 2}-R$ with $a\in A$ and $b\in B$ are either all adjacent, or are all non-adjacent in~$G$.
        (In other words, the pair $A,B$ is  \emph{homogeneous}, once the vertex pairs in $R$ are ignored.)
        This holds as otherwise, reconstructing the edge set \(E(G)\) in the remainder of the construction sequence would be impossible.
        Thus, for every pair of parts in \(\cal P\), we can declare a \emph{default connection} (either ``adjacent'' or ``non-adjacent'') that
        describes all unresolved vertex pairs between these two parts.
        (If there are no unresolved vertex pairs between them, we may choose an arbitrary default connection.)
    \item
        Assume we want to perform a merge operation between two parts, say \(A,B \in \cal P\).
        Before doing so, for each part \(C\in\cal P\) (including $A$ and  $B$) whose default connection to \(A\) is different than to \(B\), we need to resolve either the pair \(C,A\) or \(C,B\),
        as otherwise the previous item is violated
        (unless one of those pairs is fully resolved).
        All other resolutions can be postponed to after the merge of $A$ and $B$, without increasing the radius-$r$ width of the construction sequence, for each $r\in\N$. 

  \medskip
   See Figures \ref{fig:a}, \ref{fig:b}, and \ref{fig:c} for examples of construction sequences.
\end{enumerate}
\end{remark}
\FloatBarrier
{\definecolor{lightred}{RGB}{255, 158, 159}
\definecolor{darkred}{RGB}{176, 0, 2}
\definecolor{lightergray}{RGB}{234,234,234}
\definecolor{darkergray}{RGB}{126,126,126}
\begin{figure}[p]
\vspace{-0.5cm}
    \centering
    \includegraphics[scale=0.95]{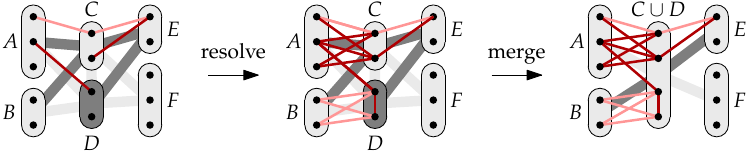}
    \caption{Excerpt of a construction sequence of a graph, with a partition \(\cal P=\{A,B,C,D,E,F\}\).
    The \emph{resolved pairs} are indicated as  \textcolor{darkred}{\rule[0.5ex]{0.3cm}{1.3pt}} (edges) and \textcolor{lightred}{\rule[0.5ex]{0.3cm}{1.3pt}} (non-edges).
    Recall that \emph{unresolved pairs} between any two parts are either all adjacent or all non-adjacent (\Cref{remark:maindef}, item 2).
    This is indicated in an aggregated way as \textcolor{darkergray}{\rule[-0.ex]{0.3cm}{5.3pt}} (edges)
    and \textcolor{lightergray}{\rule[-0.ex]{0.3cm}{5.3pt}} (non-edges) connecting parts,
    or as fill colors within each part.
    Here and below, each arrow labeled ``resolve'' represents a sequence of either positive or negative resolve operations,
    while each arrow labeled ``merge'' usually represents a single merge operation.
    For visual clarity, this figure only shows default connections within or involving parts \(C\)~and~\(D\).
    \smallskip \\
    As discussed in item 3 of \Cref{remark:maindef},
    before we can merge the two parts $C$ and $D$, every part that has a different default connection to $C$ than to $D$ must be fully resolved with either $C$ or~$D$.
    As \(E\) has the same default connection (\textcolor{darkergray}{\rule[-0.ex]{0.3cm}{5.3pt}}) to both \(C\) and \(D\),
    no resolutions involving \(E\) need to be performed before the merge.
    The same holds for~\(F\).
    However, both \(A\) and \(B\) have different default connections to \(C\) and to \(D\).
    Thus, to enable the merge, each of the parts \(A\) and \(B\) need to commit to a default connection to the new part \(C \cup D\).
    The part \(A\) commits to \textcolor{lightergray}{\rule[-0.ex]{0.3cm}{5.3pt}} and thus has to resolve all missing edges to \(C\).
    Similarly, \(B\) commits to \textcolor{darkergray}{\rule[-0.ex]{0.3cm}{5.3pt}} and resolves all missing non-edges to \(D\).
    As \(D\) has a default self-connection
    \textcolor{darkergray}{\rule[-0.ex]{0.3cm}{5.3pt}} before the merge, but \(C \cup D\) commits to  default self-connection \textcolor{lightergray}{\rule[-0.ex]{0.3cm}{5.3pt}},
    the part \(D\) also resolves all missing edges within itself.
    Afterwards, \(C\) and \(D\) can be merged.
}\label{fig:a}
\end{figure}
%
%
\begin{figure}[p]
    \centering
    \includegraphics[scale=0.95]{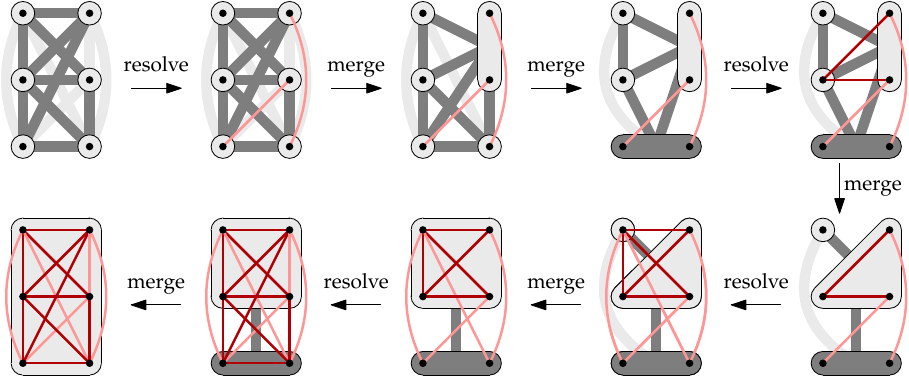}
    \captionsetup{justification=raggedright, singlelinecheck=false}
    \caption{
    A full construction sequence of a graph \(G\) witnessing \(\mw_1(G)\le 3\), using notation from Fig.~\ref{fig:a}.
}\label{fig:b}
\end{figure}
%
\begin{figure}[p]
    \centering
    \includegraphics[scale=0.95]{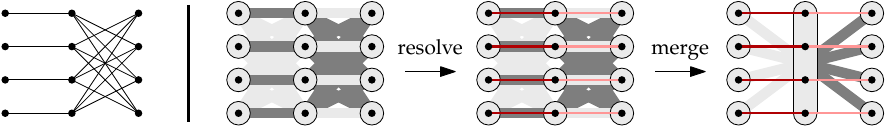}
    \caption{
        \emph{Left:} A graph $G$ consisting of a matching and a complement of a matching.
        \emph{Right:} An excerpt from a construction sequence of $G$.
        First the symmetric difference to the empty default connection (for the matching) and the full default connection (for the complement)
        is resolved, in a sequence of steps. Each vertex reaches three parts by a path of arbitrary length in the resolved graph.
        Then the middle column is merged, in a sequence of steps, into a single part.
        For clarity, default connections are only drawn between neighboring columns.
        Note that in the context of twin-width (see \Cref{ex:tww}), such a contraction sequence would
        result in many parts that are inhomogeneous towards the middle part.
}\label{fig:c}
\end{figure}}
\FloatBarrier

\begin{example}\label{ex:degree}
  As a simple example, fix $r,d\in\N$ and consider a graph $G$ with maximum degree at most $d$. We show that $G$ has radius-$r$ merge-width $O(d^r)$.
  A construction sequence of $G$ proceeds as follows:
  first resolve positively every pair $\set{u},\set{v}$ of singletons such that $u$ and~$v$ are adjacent in $G$. Next, perform a sequence of $|V(G)|-1$ merges, in any order, arriving at a partition with a single part.
  Finally, resolve negatively that part with itself.
  At any moment of the construction sequence (apart from the last step), the set $R$ of resolved vertex pairs is contained in the edge set of $G$; hence, the graph with edge set $R$ has maximum degree $d$.
  It follows that the radius-$r$ width of the construction sequence is at most $1+d+\cdots+d^r\le O(d^r)$ (see \Cref{sec:sparsity} for more details).\qed
\end{example}

\begin{example}\label{ex:tww}
  Again fix $r,d\in\N$ and consider a graph $G$ of \emph{twin-width} at most $d$. We show that $G$ has radius-$r$ merge-width $O(d^r)$.
  Recall that a pair of vertex sets of a graph $G$ is  \emph{homogeneous} if  either all or no edges between these parts are present in $G$.
Further, by definition, $G$ has twin-width at most $d$ if there
is a sequence of steps -- a \emph{contraction sequence} -- maintaining a partition $\cal P$ of $V(G)$, starting with the partition into singletons and ending in the partition with one part, such that in each step some two parts are merged into one, and moreover, each part in $\cal P$ is inhomogeneous towards at most $d$ other parts in $\cal P$. This can be readily converted into a construction sequence, by following the merges of the contraction sequence, and prior to merging two parts $A,B\in \cal P$, resolving the pairs $C,A$, and $C,B$, for each part $C\in\cal P$
such that the pair $C, A\cup B$ is not homogeneous in $G$.
Once $A$ and $B$ are merged, the set $R$ of resolved vertex pairs
consists of those pairs $\set{u,v}$ such that the pair of parts containing $u$ and $v$ respectively is inhomogeneous. It is easy to see
that the radius-$r$ width of the resulting construction sequence is at most $2+d+\cdots+d^r\le O(d^r)$ (see \Cref{sec:tww} for details).\qed
\end{example}

\Cref{ex:tww} proves the following.
\begin{restatable}{theorem}{twwintro}\label{thm:tww}
  Graph classes of bounded twin-width have bounded merge-width.
\end{restatable}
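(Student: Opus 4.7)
The plan is to make Example~\ref{ex:tww} rigorous: take a contraction sequence witnessing that $G$ has twin-width at most $d$, turn it into a construction sequence, and bound its radius-$r$ width by $O(d^r)$.

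First, I recall the standard trigraph view of twin-width. A $d$-contraction sequence of $G$ is a sequence of partitions $\cal P_0,\cal P_1,\ldots,\cal P_n$ of $V(G)$, with $\cal P_0$ the singleton partition, $\cal P_n$ a single part, and each $\cal P_{i+1}$ obtained from $\cal P_i$ by merging two parts; attached to each $\cal P_i$ is the set of \emph{red} pairs $\{A,B\}$ (possibly $A=B$) for which $(A,B)$ is inhomogeneous in $G$, and the twin-width bound says that every part of $\cal P_i$ participates in at most $d$ red pairs. I translate this into a construction sequence step by step: before performing the $i$-th merge of $A,B\in\cal P_i$, I perform all unavoidable resolutions, namely for each part $C\in\cal P_i$ (including $A$ and $B$) whose default connection to $A$ disagrees with its default connection to $B$, I resolve exactly one of the pairs $(C,A)$ or $(C,B)$ as allowed by item~3 of \Cref{remark:maindef}, and then perform the merge.

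Second, I establish the invariant that throughout the construction sequence the set $R$ of resolved pairs is contained in $\bigcup_{\{A,B\}\text{ red}} (A\times B)\cap\binom{V(G)}{2}$, where the red pairs are those of the most recent trigraph. Indeed, a vertex pair is only ever resolved because its two parts were at that moment inhomogeneous, hence red. Now fix any step of the construction sequence and a vertex $v$ in the part $P_0$ of the current partition $\cal P$. Any neighbor of $v$ in the graph $(V,R)$ lies in a part of $\cal P$ that is red-adjacent to $P_0$; by the red-degree bound there are at most $d$ such parts, plus $P_0$ itself if it carries a red self-loop. Iterating, the ball of radius $r$ around $v$ in $(V,R)$ meets at most $1+(d+1)+(d+1)d+\cdots+(d+1)d^{r-1}=O(d^r)$ parts of $\cal P$, which is the definition of radius-$r$ width.

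The main subtlety is handling the intermediate moments during the resolution phase just before a merge: there the partition is still $\cal P_i$ but some pairs involving $A$ or $B$ are already resolved according to the trigraph of $\cal P_{i+1}$. Here I observe that splitting the merged part $A\cup B$ back into $A$ and $B$ can at most double the count of reachable parts, so the bound $O(d^r)$ survives up to a constant factor; this is exactly where the additive~$2$ in the sketched estimate $2+d+\cdots+d^r$ comes from. Beyond this bookkeeping, the argument is purely mechanical, yielding $\mw_r(G)=O(d^r)$ for every $r\in\N$ and hence bounded merge-width for every class of bounded twin-width.
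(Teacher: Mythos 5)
Your proposal is correct and follows essentially the same route as the paper's proof (Lemma~\ref{lem:tww}): resolve exactly the vertex pairs lying between inhomogeneous (red) parts, observe that redness is preserved under the coarsenings of the contraction sequence so the resolved graph's radius-$r$ balls are confined to red-graph balls of size $O(d^r)$, and absorb the intermediate pre-merge moments by noting that un-merging $A\cup B$ changes the count of reachable parts only by a bounded amount. The paper packages this slightly more cleanly as a merge sequence, taking $R_t$ to be all pairs whose parts in $\cal P_t$ are equal or inhomogeneous (which also covers your resolved pairs that later end up inside a single part) and using that $\cal P_{t-1}$ has exactly one more part than $\cal P_t$ to get the bound $2+d+\cdots+d^r$, but these are presentational differences only.
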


While twin-width measures inhomogeneity on a \emph{part-to-part} level,
the central conceptual novelty of merge-width is
to measure inhomogeneity on a \emph{vertex-to-vertex} level (via the set $R$ of resolved vertex pairs).
This more fine-grained perspective is the key behind
merge-width's additional expressive power (see also Fig.~\ref{fig:c}). For instance,
 graphs of maximum degree at most three have unbounded twin-width \cite{tww2}
and have bounded merge-width, by \Cref{ex:degree}.
As it is expected of any notion that extends concepts from Sparsity theory, the number of reachable parts
needs to be bounded separately for each number of steps \(r \in \N\).
Note that in the definition of a merge sequence, it is crucial that parts are not required to be fully resolved:
if we required that at any time, all vertex pairs contained in a single part are resolved, then 
the resulting parameter for radius $2$ would be functionally equivalent with twin-width.

\begin{example}\label{ex:degeneracy}
  We show that $\mw_1(G)\le d+2$ for all $d$-degenerate graphs.
  A $d$-degenerate graph admits a \emph{degeneracy} total ordering of its vertices, such that every vertex has at most $d$ neighbors before it. Suppose $G$ has $n$ vertices, with degeneracy ordering $v_n<\ldots<v_1$.

  We define a construction sequence consisting of $n$ stages.
  At the beginning of stage $t$, for $t=1,\ldots,n$,
  the current partition  is
$$\cal P_t\coloneqq\Big\{\set{v_n},\ldots,\set{v_{t+1}},\set{v_t,\ldots,v_1}\Big\},$$
and the set of resolved edges \(E_t\) consists of all edges of $G$ with at least one endpoint in $\set{v_t,\ldots,v_1}$,
while the set of resolved non-edges \(N_t\) is empty.
See \Cref{fig:degeneracy} for an illustration.
\begin{figure}
    \centering
    \includegraphics{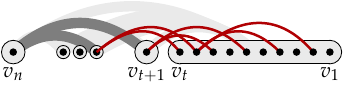}
    \caption{
        A step in a construction sequence of a graph with degeneracy one.
}\label{fig:degeneracy}
\end{figure}
  In stage $t$ with $t<n$, first positively resolve the singleton part $\{v_{t+1}\}$ with all other singleton parts that are adjacent to it in \(G\),
  obtaining the set $E_{t+1}$.
  Afterwards, merge $\set{v_{t+1}}$ with the part $\set{v_t,\ldots,v_1}$,
  obtaining the partition $\cal P_{t+1}$.
  As a last step, in stage $n$, we resolve \(\{v_n,\dots,v_1\}\) negatively with itself,
  guaranteeing that all pairs are resolved.

This is a construction sequence.
  Moreover, at every moment of the construction sequence,
  every vertex $v$ is equal or adjacent in the current resolved graph $(V,E \cup N)$ to vertices in at most $d+2$ parts:
  at most $d+1$ singleton parts (including $\set{v}$),
  and the  part $\set{v_t,\ldots,v_1}$.\qed
\end{example}

Thus, graph classes of bounded degeneracy have bounded radius-1 merge-width
(they may have unbounded radius-2 merge-width, however).
Conversely, it follows from our results (see \Cref{cor:deg}) that
a graph class $\CC$
has bounded degeneracy if and only if it has bounded \mbox{radius-1} merge-width, and
excludes some biclique $K_{t,t}$ as a subgraph.
Thus, radius-1 merge-width may be seen as an extension of degeneracy to graphs which are possibly dense.

On the other extreme, the definition of merge-width also makes sense for the limit radius $r=\infty$.
It is easy to see that $\mw_\infty(G)$ is functionally related to the clique-width of $G$ (see \Cref{thm:cw}).
Thus, in a sense, the merge-width parameters, for finite $r$, are local variants of clique-width.

In this paper, our main focus is the study of graph classes of bounded merge-width.
We already saw in \Cref{thm:tww} that those include all classes of bounded twin-width.
They also include all classes of bounded expansion, a key notion in Sparsity theory.



\begin{restatable}{theorem}{beintro}
  \label{thm:be}
  Graph classes of bounded expansion have bounded merge-width.
\end{restatable}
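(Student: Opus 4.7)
The plan is to leverage the characterization of bounded expansion by generalized coloring numbers: for every $r\in\N$ there is a constant $c=c(\CC,r)$ such that every $G\in\CC$ admits a linear ordering $\sigma$ of $V(G)$ with $\wcol_r(G,\sigma)\le c$ (equivalently, with bounded $\adm_r$).

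Given $G\in\CC$ and a target radius $r$, I would fix such an ordering $\sigma=v_1<v_2<\cdots<v_n$ and mimic the construction sequence of \Cref{ex:degeneracy}: process the vertices in the order $\sigma$, maintaining a single growing processed part $B_t=\{v_1,\dots,v_t\}$, and at each transition positively resolve all unresolved edges of $G$ between $v_{t+1}$ and later singletons before merging $\{v_{t+1}\}$ into $B_t$. The resolved graph at stage $t$ is then exactly the subgraph of $G$ consisting of the edges with at least one endpoint in $B_t$, so in particular no two unprocessed singletons are joined by a resolved edge.

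The bulk of the work is bounding the radius-$r$ width of this sequence. Fix a stage $t$, a vertex $v$, and an unprocessed singleton $\{v_k\}$ reachable from $v$ by a resolved path $P$ of length $\le r$. Such $P$ lifts to a $G$-path of the same length from $v$ to $v_k$; the $\sigma$-smallest vertex $v_m$ on $P$ lies in $B_t$ (since every unprocessed singleton is $\sigma$-larger than every vertex of $B_t$) and satisfies $v_m\in\wreach_r[v_k]\cap\wreach_r[v]$. This parametrises the reachable singletons by pairs $(v_m,v_k)$ with $v_m\in\wreach_r[v]\cap B_t$ and $v_k$ weakly $r$-reaching $v_m$.

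The key obstacle is that $\wcol_r$ controls forward reachability, not its converse: a priori many singletons could share the same $v_m$ in their weak reach. To close the argument I would either pass to admissibility, using a Menger-type argument to bound the number of vertex-disjoint back-paths through each candidate $v_m$, or refine the construction sequence so that when a vertex is processed, all resolutions between it and its future weak $r$-reach are also performed, giving a sharper stepwise accounting. Either refinement yields a bound on the radius-$r$ width depending only on the generalized-coloring constants of $\CC$, and thus $\mw_r(\CC)<\infty$ for every $r$.
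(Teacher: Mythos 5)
Your starting point is the same as the paper's (the $\wcol$ characterization of bounded expansion and a vertex-by-vertex elimination along the ordering), but the construction you propose is essentially the degeneracy construction of \Cref{ex:degeneracy}, and that construction provably fails for radius $r\ge 2$ --- the paper itself warns that it ``does not trivially generalize''. Concretely: for the star $K_{1,n}$ the $\wcol$-optimal ordering puts the center $c$ first, so your sequence processes $c$ first and resolves all $n$ edges; every leaf then reaches every other leaf by a resolved path of length $2$ through $c$, so the radius-$2$ width is $n{+}1$ even though $\wcol_r(K_{1,n})\le 2$. (Processing from the other end does not help: take a spider with center $c$, legs $c$--$\ell_i$--$p_i$, and the order $c<p_1<\cdots<p_n<\ell_1<\cdots<\ell_n$; after the $\ell_i$ are processed, $c$ reaches all $n$ singletons $\{p_i\}$ at radius $2$.) The obstacle you correctly identify --- that many vertices may weakly reach the same $v_m$ --- is thus not a gap in the counting but a fatal defect of the decomposition itself, and neither of your proposed repairs addresses it: admissibility bounds the number of \emph{disjoint outgoing} paths from $v$ to \emph{smaller} vertices, whereas here the problematic endpoints (the leaves, the $p_i$) are \emph{larger} than the hub and need not be few; and resolving additional pairs can only enlarge the resolved graph and hence the width.

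The paper's construction (\Cref{lem:wcol}) differs in two essential ways. First, the resolved set $R_t$ consists only of edges with \emph{both} endpoints in the processed set $L_t$ (the $t$ largest vertices), so every resolved path starting in $L_t$ stays inside $L_t$ and lifts to a $G$-path avoiding the unprocessed set $S_t$; in particular no unprocessed singleton is ever an internal vertex or endpoint of a resolved path. Second, the processed vertices are \emph{not} merged into a single part but are partitioned by their atomic types (neighborhoods) over $S_t$ --- this is what makes the pair $(\cal P_t,R_t)$ homogeneous despite leaving the $L_t$--$S_t$ edges unresolved. The width bound then comes from observing that if $w\in L_t$ is reached from $v$ by a resolved path of length $\le r$, then $N_G(w)\cap S_t\subseteq\wreach_{r+1}(G,\le,v)$, so the reachable parts correspond to at most $2^{\wcol_{r+1}(G)}$ distinct neighborhoods. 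Both modifications are load-bearing, and neither appears in your proposal.
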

\noindent The proof of \Cref{thm:be} converts total orders with small \emph{weak coloring numbers} -- a fundamental parameter in Sparsity -- into construction sequences. A graph class $\CC$ has bounded expansion if and only if
for each $r\in\N$ there is some constant $k_r$ such that each graph in $\CC$ has \emph{$r$-weak coloring} number at most $k_r$.
By definition, a graph $G$ with $r$-weak coloring number~$k$ admits a total order $\le$ on its vertices, so that
for every vertex $v\in V(G)$, there are at most $k$ vertices $w\le v$ that can be reached from $v$ by a path $\pi$ of length ${\le}r$, with $w=\min(V(\pi))$.
We use such an order to prove $\mw_{r-1}(G)\le O(2^k)$, by defining a construction sequence of $G$ of radius-$(r-1)$ width at most $O(2^k)$.
Similarly as in \Cref{ex:degeneracy}, the sequence proceeds in $n$ stages. At the beginning of stage~$t$, the partition $\cal P_t$ partitions the set $S_t$
of $n-t$ smallest elements of $V$ into singletons, and the remaining vertices are partitioned according to their neighborhoods in $S_t$, whereas the set $E_t$ of resolved edges consists of all edges of $G$ whose \emph{both endpoints} (unlike in \Cref{ex:degeneracy}) lie outside of $S_t$. A simple analysis shows that the ball of radius $(r-1)$ around any given vertex in the graph $(V,E_t)$ intersects at most $2^k$ parts of $\cal P_t$.
See \Cref{lem:wcol} for details.


\medskip
On the other hand, classes of bounded merge-width have bounded flip-width.

\begin{theorem}\label{thm:mw-fw}
Every class of bounded merge-width has bounded flip-width.
\end{theorem}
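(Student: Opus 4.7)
The plan is to turn a construction sequence of bounded radius-$r$ width for a graph $G$ into a winning Flipper strategy in the radius-$r$ flip-width game, with width bounded by a function of $k \coloneqq \mw_r(G)$ and $r$ only. The key observation is that every step $(\cal P, R)$ of a construction sequence of $G$ yields a canonical flip: take $\cal P$ as the partition and flip precisely those pairs of parts whose default connection is ``adjacent''. Because unresolved pairs between any two parts are homogeneous (item~2 of \Cref{remark:maindef}), the flipped graph $G'$ satisfies $E(G') \subseteq R$, and the radius-$r$ width condition states exactly that the $r$-ball in $G'$ around any vertex meets at most $k$ parts of $\cal P$ --- precisely the kind of localization the Flipper aims to achieve.

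Based on this, the Flipper's strategy traverses the construction sequence while maintaining the invariant that the Robber's current vertex lies in a subset $T \subseteq V(G)$ which is a union of a bounded number of parts of the current step $(\cal P_t, R_t)$. In each round the Flipper plays (a coarsening of) the flip induced by the current step; after the Robber moves, $T$ shrinks to a union of at most $k$ parts of $\cal P_t$. The Flipper then advances the pointer backwards through the construction sequence to the first step where a merge involving one of these parts is undone, splitting $T$ into strictly more, smaller parts and thus refining the Robber's effective territory. A potential-function argument on the binary merge-tree underlying the construction sequence shows that, within a polynomial number of rounds, $T$ is reduced to a single vertex and the Flipper wins.

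The main obstacle is the mismatch between the number of parts $|\cal P_t|$, which can be as large as $|V(G)|$, and the flip-width's width budget, which should depend only on $k$ and $r$. The Flipper must coarsen $\cal P_t$ into a partition of at most $f(k,r)$ parts that still separates the ``relevant'' parts --- those reachable from the Robber's vertex in $(V, R_t)$ within radius~$r$ --- while collapsing the irrelevant parts together. The subtle point is that default connections from the relevant parts to the irrelevant ones are heterogeneous, so naive coarsening produces spurious edges at the boundary that the Robber could exploit to escape. Overcoming this will probably require either enlarging the flip's width to roughly $k^{O(r)}$ parts --- enough to isolate every vertex reachable from the Robber through up to $r$ ``boundary'' edges --- or an amortized argument tolerating occasional boundary incursions as long as the Flipper's pointer makes net progress along the construction sequence. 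I expect this translation between the combinatorial width of the construction sequence and the game-theoretic width of flip-width to be the main technical challenge.
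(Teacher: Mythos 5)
Your overall architecture matches the paper's: reverse the construction sequence so the partitions refine rather than coarsen, use the homogeneity of unresolved pairs to extract from each step $(\cal P_t,R_t)$ a $\cal P_t$-flip $G_t$ with $E(G_t)\subseteq R_t$ (this is exactly \Cref{lem:flip}), and have the flipper walk along this sequence so that the runner, confined to short paths inside the resolved graph, ends up isolated when the final flip is edgeless. However, you have not actually proved the theorem: you explicitly leave open the step you yourself identify as the crux, namely how to replace the partition $\cal P_t$ (which may have $|V(G)|$ parts) by a flip of width bounded in $k$ and $r$ without creating ``spurious edges at the boundary'' through which the runner escapes. Saying that this ``will probably require'' one of two sketched fixes is not a proof, and neither sketch as stated is the one that works.

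The paper's resolution is worth internalizing because it is cleaner than both of your proposed options. Given the runner's position $v$, let $\cal Q$ be the at most $s$ parts of $\cal P_t$ reachable from $v$ by a path of length at most $2r-1$ in $(V,R_{t-1})$. Define $G_t'$ to agree with $G_t$ on all pairs of parts touching $\cal Q$ and to agree with the \emph{original} graph $G$ everywhere else. To see this is a bounded-width flip, one does not need to separate the irrelevant parts from each other at all: for each $A\in\cal Q$ one only records the single set $U(A)$ of vertices whose pairs with $A$ are flipped, and the common refinement of the at most $2s$ sets in $\cal Q\cup\setof{U(A)}{A\in\cal Q}$ yields a partition with at most $4^s$ cells witnessing that $G_t'$ is a $4^s$-flip of $G$. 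No spurious escape arises because the ball-containment invariant $B^r_{G_t'}(w)\subseteq B^r_{G_t}(w)$ is proved by induction on the radius: every vertex the runner can reach in $\le r$ steps of $G_t'$ from a vertex $w\in B^r_{G_{t-1}}(v)$ already lies inside $\bigcup\cal Q$, where $G_t'$ and $G_t$ coincide, so the unmodified region is never entered. Note also the radius bookkeeping: your sketch works with reach $r$ in $(V,R_t)$, but the runner first traverses up to $r$ edges of $G_{t-1}'\subseteq R_{t-1}$ and the flipper must then control a further $r$-ball, so one needs radius-$(2r-1)$ merge-width to bound radius-$r$ flip-width; this is harmless for the qualitative statement (bounded merge-width bounds all radii) but must be tracked in any quantitative bound. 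Finally, your ``potential function on the binary merge-tree'' is unnecessary: the flipper simply plays the reversed sequence one step per round and wins after at most $|V(G)|$ rounds because the last flip is edgeless.
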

\noindent\Cref{thm:mw-fw}, proved in \Cref{sec:fw}, allows us to derive properties of classes of bounded merge-width from known properties of classes of bounded flip-width. In particular, we get the following corollaries,
characterizing classes of bounded expansion and of bounded twin-width in terms of merge-width.

\begin{restatable}{corollary}{corbe}\label{cor:be}
  A graph class has bounded expansion if and only if it has bounded merge-width, and is weakly sparse (excludes some biclique $K_{t,t}$ as a subgraph).
\end{restatable}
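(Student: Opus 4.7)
The plan is to prove the two directions separately, drawing on \Cref{thm:be} and \Cref{thm:mw-fw} to reduce most of the work to already-established statements.

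For the forward direction, suppose $\CC$ has bounded expansion. Then \Cref{thm:be} directly gives that $\CC$ has bounded merge-width, so only weak sparsity needs to be justified. Bounded expansion implies in particular that the $0$-shallow minor density (the greatest reduced average degree $\nabla_0$) is bounded by some constant $c$ across $\CC$. Since $K_{t,t}$ has average degree $t$, no $K_{t,t}$ with $t>2c$ can appear as a subgraph of any $G\in\CC$, giving weak sparsity. This half is essentially a reminder.

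For the backward direction, assume $\CC$ has bounded merge-width and excludes some $K_{t,t}$ as a subgraph. The plan is to pass through flip-width: by \Cref{thm:mw-fw}, $\CC$ has bounded flip-width, and then invoke the result from \cite{flip-width} asserting that a weakly sparse class of bounded flip-width must have bounded expansion. This is precisely the content of the implication in the right-hand red box of \Cref{fig:diagram}, restricted to the weakly sparse regime, and it is already proven in the flip-width framework. Combining this with our hypothesis that $\CC$ is weakly sparse yields that $\CC$ has bounded expansion.

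I expect the main obstacle to be purely a bookkeeping one: verifying that our notion of flip-width obtained via \Cref{thm:mw-fw} is the same as (or at least bounds) the quantity used in \cite{flip-width} to derive the weakly-sparse characterization of bounded expansion, so that the cited implication can be applied as a black box. There is no serious combinatorial work in the corollary itself: the heavy lifting is done in \Cref{thm:be}, \Cref{thm:mw-fw}, and the existing sparse characterization of bounded flip-width, and the corollary simply assembles them into a clean equivalence.
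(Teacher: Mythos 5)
Your proposal is correct and follows essentially the same route as the paper: the forward direction combines \Cref{thm:be} with the (easy) weak sparsity of bounded-expansion classes, and the backward direction passes through \Cref{thm:mw-fw} (proved as \Cref{thm:fw-cases}) and the weakly-sparse characterization of bounded flip-width from \cite[Thm.~VI.3]{flip-width}. The bookkeeping concern you raise is harmless, since the paper uses the flip-width definition verbatim from \cite{flip-width}.
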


In order to characterize twin-width in terms of merge-width, note that  the definition of merge-width extends easily to structures equipped with one or multiple binary relations, such as \emph{ordered graphs} --  graphs equipped with a total order on the vertex set. (See also \Cref{sec:computing} for merge-width of binary structures.) Similarly, the notion of twin-width also applies to binary structures \cite{tww4}, and \Cref{thm:tww} and \Cref{thm:mw-fw} also hold for classes of binary structures. We get the following corollary, from an analogous result concerning flip-width \cite[Thm. II.8]{flip-width}.
\begin{corollary}\label{cor:tww-mw-ordered}
  A class of ordered graphs has bounded twin-width if and only if it has bounded merge-width.
\end{corollary}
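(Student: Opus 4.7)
The plan is to obtain the equivalence by chaining three results, two of which carry over from the graph setting and one of which is the cited ordered analogue of a flip-width characterization of twin-width.

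First, I would establish the forward implication, that bounded twin-width implies bounded merge-width on ordered graphs. This is exactly the ordered version of \Cref{thm:tww}. The construction from \Cref{ex:tww} applies essentially verbatim once we extend the notion of construction sequence to binary structures: a pair $\{u,v\}$ is resolved only when its status with respect to \emph{each} binary relation (the edge relation and the order) has been committed to, and each pair of parts in $\cal P$ must be homogeneous (up to resolved pairs) with respect to all relations simultaneously. Starting from a contraction sequence of twin-width at most $d$ for the ordered graph -- the inhomogeneity in the definition of twin-width already accounts for both the edges and the order -- the same translation as in \Cref{ex:tww} yields a construction sequence of radius-$r$ width $O(d^r)$.

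Second, I would invoke the ordered analogue of \Cref{thm:mw-fw}: every class of ordered graphs of bounded merge-width has bounded flip-width. Both merge-width and flip-width are defined in terms of local reachability and local homogeneity in graphs derived from the underlying binary relations, so the argument for \Cref{thm:mw-fw} adapts to binary structures by treating the order as an additional relation throughout; no new ideas are needed.

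Finally, I would apply \cite[Thm. II.8]{flip-width}, which asserts that for classes of ordered graphs, bounded flip-width is equivalent to bounded twin-width. Combining, bounded merge-width implies bounded flip-width implies bounded twin-width, while the forward direction was handled above. The main obstacle is essentially bookkeeping: verifying that the proofs of \Cref{thm:tww} and \Cref{thm:mw-fw} go through uniformly for structures with two binary relations, and reconciling the formal definitions of radius-$r$ width and twin-width so that the order-relation is not accidentally double-counted. The substantive, nontrivial input is the cited ordered flip-width characterization of twin-width, which we use as a black box.
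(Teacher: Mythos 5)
Your proposal matches the paper's argument exactly: the forward direction is the ordered-graph version of \Cref{thm:tww} (converting a contraction sequence into a construction sequence as in \Cref{ex:tww}), and the backward direction chains the ordered analogue of \Cref{thm:mw-fw} with the cited flip-width characterization of twin-width for ordered graphs from \cite[Thm. II.8]{flip-width}. The paper likewise treats the extension of both ingredients to binary structures as routine, so there is nothing to add.
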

The following is a consequence for usual (unordered) graphs, as every graph can be equipped with a total order, yielding an ordered graph of the same twin-width \cite{tww4}.

\begin{restatable}{corollary}{cortww}\label{cor:tww-mw}
  A graph class has bounded twin-width if and only if it is obtained
  from some class of {ordered graphs} of bounded merge-width
  by forgetting the order.
\end{restatable}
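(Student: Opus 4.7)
The plan is to deduce the corollary directly from Corollary \ref{cor:tww-mw-ordered} combined with the fact, cited from \cite{tww4}, that every graph admits a total order preserving twin-width up to a constant function.

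For the forward direction, I would argue as follows. Suppose $\CC$ has bounded twin-width. For each $G\in\CC$, apply the result from \cite{tww4} to obtain a total order $\le_G$ on $V(G)$ such that the ordered graph $(G,\le_G)$ has twin-width bounded by a function of $\tww(G)$. Let $\vec\CC\coloneqq\setof{(G,\le_G)}{G\in\CC}$. Then $\vec\CC$ is a class of ordered graphs of bounded twin-width, hence of bounded merge-width by \Cref{cor:tww-mw-ordered}. Since $\CC$ is obtained from $\vec\CC$ by forgetting the order, this direction is established.

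For the backward direction, suppose $\CC$ is obtained from a class $\vec\DD$ of ordered graphs of bounded merge-width by forgetting the order. By \Cref{cor:tww-mw-ordered} (applied to ordered graphs), $\vec\DD$ has bounded twin-width. The key observation is that forgetting a binary relation cannot increase twin-width: any contraction sequence witnessing bounded twin-width of $(G,\le)$ also witnesses bounded twin-width of $G$, because the homogeneity condition is required over fewer relations, so the set of parts inhomogeneous to a given part can only shrink. Hence $\CC$ has bounded twin-width.

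There is no real technical obstacle here, as both implications chain two ingredients already established: \Cref{cor:tww-mw-ordered} on the ordered side, and the order-extension theorem of \cite{tww4} together with the trivial monotonicity of twin-width under taking reducts. The only point meriting a line of justification is this last monotonicity, which is essentially immediate from the definition of twin-width as recalled in \Cref{ex:tww}.
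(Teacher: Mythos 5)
Your proposal is correct and follows essentially the same route as the paper, which derives this corollary from \Cref{cor:tww-mw-ordered} together with the order-extension result of \cite{tww4}; your explicit note that forgetting the order cannot increase twin-width (the red graph of a reduct is a subgraph of the red graph of the full structure) is a correct justification of a step the paper leaves implicit in the backward direction.
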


We conjecture that
the converse to \Cref{thm:mw-fw} also holds: that every class of bounded flip-width has bounded merge-width (see Discussion below).
As we demonstrate in \Cref{sec:fw}, construction sequences can be
viewed as representations of specific winning strategies for the pursuer in the flip-width game. In particular,
in those strategies, the pursuer wins in $n$ rounds on an $n$-vertex graph (see \Cref{rem:duration}).
This is reminiscent of the situation with treewidth, where tree decompositions correspond to \emph{monotone} winning strategies for the pursuer in the Cops and Robber game.
If indeed every class of bounded flip-width
has bounded merge-width, this would imply that whenever
the pursuer has a winning strategy, then
they also have one of a special form, analogously to the treewidth case.

\paragraph{Main result.}
As our second contribution and  main result,
we prove that the model checking problem for first-order logic
is fixed-parameter tractable on graph classes of bounded merge-width,
provided the input graph comes with a construction sequence demonstrating that its merge-width is bounded.
More precisely, we prove:

\begin{restatable}{theorem}{intromain}\label{thm:main}
There is an algorithm which,
given a graph $G$ together with a construction sequence
and a first-order graph sentence $\phi$,
determines whether $G$ satisfies $\phi$
in time
\(O_{q,w}(|V(G)|^3)\),
where:
\begin{itemize}
    \item $q$ is the quantifier rank of $\phi$,
    \item $w$ is the radius-\(r\) width of the construction sequence, where  $r=2^{O(q^2)}$.
\end{itemize}
\end{restatable}
\noindent 
See \Cref{sec:representing} for how construction sequences are represented.

\medskip
\Cref{thm:main} conceptually generalizes two previous results concerning classes of bounded expansion and classes of bounded twin-width:
\begin{enumerate}[wide, labelwidth=!, labelindent=0pt]
  \item We extend the result of \Dvorak, \Kral, and Thomas~\cite{DvorakKT13-journal}, that model checking is fixed-parameter tractable on all graph classes of bounded expansion. This is because for every graph $G$ from such a class~$\CC$, a construction sequence of bounded radius-$r$ width can be computed in  time\footnote{Throughout this paper,
  by $O_{p}(n)$, where $p$ is a list of parameters, we denote a value which is upper-bounded by $c\cdot n+d$, for some constants $c,d$ depending on $p$.} $O_{r,\CC}(|V(G)|)$, for every fixed $r\in\N$
  (see \Cref{thm:be}).

  \item We also extend the result of Bonnet, Kim, Thomass\'e, and Watrigant \cite{tww1}, that model checking is fixed-parameter tractable on all graphs of bounded twin-width, assuming that
  the input graph $G$ is given together with a contraction sequence witnessing that the twin-width is bounded by a constant $d$. This is because such a contraction sequence can be converted in time $O_d(|V(G)|)$ into a construction sequence of radius-$r$ width at most $O(d^r)$, for every $r\in\N$ (see \Cref{ex:tww}).
\end{enumerate}
Note that in these two results, the running time is linear in $|V(G)|$, whereas in \Cref{thm:main}
it is cubic, and thus, our result falls short of fully implying them.
We conjecture that \Cref{thm:main} can be strengthened to obtain a linear dependence on \(|V(G)|\).

Our proof combines tools used in the context of model checking for classes of bounded twin-width (\emph{local types} \cite{tww-types-icalp}),
with tools used in the context of model checking for nowhere dense classes (\emph{rank-preserving Gaifman normal form} \cite{gks}).
See \Cref{sec:overview} for an overview of our proof.

\paragraph{Closure under interpretations and transductions.}
In addition,
we exhibit some further properties of classes of bounded merge-width.
Namely, they are preserved under one-dimensional first-order interpretations (more generally, transductions), a notion from model theory.
This roughly means that redefining the edges of each graph in the class using a fixed first-order formula $\phi(x,y)$
preserves classes of bounded merge-width.
More precisely, for a graph  $G$ and first-order formula  $\phi(x,y)$,
we construct
a graph $\phi(G)$
with vertices $V(G)$ and edges $\set{u,v}\in {V(G)\choose 2}$ such that $G\models\phi(u,v)\lor\phi(v,u)$.
In \Cref{sec:closure} we prove the following:

\begin{restatable}{theorem}{introinterp}\label{thm:interp}
  Let $\phi(x,y)$ be a first-order formula and
   $\CC$ be a graph class of bounded merge-width.
  Then the class $\setof{\phi(G)}{G\in\CC}$ has bounded merge-width.
\end{restatable}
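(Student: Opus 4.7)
The plan is to lift a construction sequence for $G$ to one for $\phi(G)$ by running the same merge operations but resolving additional pairs so that the $\phi$-defaults between remaining parts are well-defined. The starting reduction is to apply Gaifman's locality theorem (in a rank-preserving form, as is likely developed for the proof of \Cref{thm:main}) to rewrite $\phi(x,y)$ as a Boolean combination of basic local formulas, each of which only depends on the $r_0$-neighborhoods of $x$ and $y$ in $G$ for some $r_0=r_0(q)$, together with global sentences. The latter contribute only finitely many bits of global information about $G$, which can be absorbed into a coloring; since adding a bounded number of unary predicates preserves bounded merge-width (these predicates can simply be ignored when measuring the radius-$r$ width), I may therefore assume $\phi(x,y)$ is $r_0$-local.

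Next, fix a construction sequence $\Sigma$ for $G$ of radius-$r$ width at most $w$, where $r$ is chosen sufficiently large as a function of $r_0$ and $w$ (e.g.\ $r\ge 2r_0+1$, potentially iterated to accommodate nested local types). I would construct a construction sequence $\Sigma'$ of $\phi(G)$ on the same vertex set that performs the same merges as $\Sigma$ but with an enriched notion of default connection. Specifically, for each ordered pair of parts $(A,B)$ present in the current partition, I classify vertices $v\in A$ by their $r_0$-local type in the graph $(V,E\cup N)$ of currently resolved pairs, augmented with the partition $\cal P$ viewed as additional unary/binary information. Because $\phi$ is $r_0$-local, the value $\phi^G(a,b)$ for an unresolved pair $(a,b)\in A\times B$ is determined by the local types of $a$ in $A$ and of $b$ in $B$ together with whether the relevant parts of their $r_0$-neighborhoods have been resolved. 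I pick the most frequent such value as the $\phi$-default for the pair $(A,B)$, and eagerly resolve in $\Sigma'$ all pairs that deviate from it; I must also do this before merges, analogously to item~3 of \Cref{remark:maindef}.

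The key quantitative step is to bound the radius-$r_0$ width of $\Sigma'$ in terms of $w$ and $r_0$. Here I would argue that the set of pairs $R'$ resolved in $\Sigma'$ at any moment is contained in the union of $R$ (pairs resolved in $\Sigma$ at the corresponding moment) and a set of \emph{type-witness pairs}: pairs $\{a,a'\}\subseteq A$ whose $r_0$-types in $(V,R\cup\cal P)$ differ and whose difference is witnessed by some resolved pair in $\Sigma$ lying within distance $r_0$ of them. Since each such witness pair has bounded multiplicity of usage, a path of length $r_0$ in $(V,R')$ can be simulated by a walk of bounded length in $(V,R)$ that visits only $f(w,r_0)$ parts of $\cal P$, for some function $f$. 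This yields $\mw_{r_0}(\phi(G))\le f(w,r_0)$, proving that $\{\phi(G):G\in\CC\}$ has bounded merge-width.

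The main obstacle is the rigorous treatment of local types through merges: we need to ensure that the type of a vertex $v$ remains well-defined and stable under the sequence of merges and resolutions, and in particular that merging two parts of the same type does not cause further type refinement elsewhere. This is precisely the kind of bookkeeping required for \Cref{thm:main}, and I would expect the closure result to follow from that type machinery essentially by re-interpreting its type labels as the $\phi$-defaults defining $\Sigma'$. A secondary technical point is that the reduction to local $\phi(x,y)$ via Gaifman normal form must preserve the one-dimensional nature of the interpretation (so that vertex sets do not blow up), which is standard but must be handled carefully in the rank-preserving form.
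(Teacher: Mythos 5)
Your high-level instinct --- away from resolved pairs the value of $\phi$ should be determined by local data, so one can lift the construction sequence of $G$ to one for $\phi(G)$ --- matches the paper's strategy in \Cref{lem:interp}. But the construction as you describe it has a concrete flaw. You keep the parts of the original sequence $\Sigma$ and then, between two parts $A,B$, resolve every pair that deviates from a majority $\phi$-default. Such deviating pairs can join vertices that are arbitrarily far apart in the resolved graph $(V,R)$ of $\Sigma$ (a minority-type vertex of $A$ paired with \emph{every} vertex of $B$, say), so the new resolved graph $(V,R')$ is not locally controlled by $(V,R)$; the claim that a short path in $(V,R')$ is simulated by a bounded walk in $(V,R)$ visiting few parts has no justification and fails in general. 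The paper avoids this entirely by \emph{refining} the partition rather than resolving deviations: the parts of the new merge sequence at time $t$ are the classes of vertices with equal local type $\ltp_{2,q}(\str A_t,\cdot)$ in the partitioned structure $\str A_t$, and the resolved set $R_t$ is exactly the set of pairs at distance $\le\rho(2,q)$ in the current Gaifman graph. With that choice, between any two parts every far pair is automatically $\phi$-homogeneous (\Cref{cl:default}), so nothing ``deviating'' is ever resolved, every resolved pair is short, and the width bound (\Cref{cl:width}) follows from the width of the original sequence.

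The second gap is the one you yourself flag as ``the main obstacle,'' and it is where the real content lies. Knowing that $\phi$ is $r_0$-local \emph{in $G$} does not tell you that $\phi^G(a,b)$ is determined by the local types of $a$ and $b$ in the graph of \emph{currently resolved pairs}: at time $t$ most of $E(G)$ is still unresolved, so the $r_0$-neighborhood of $a$ in the resolved structure is a completely different object from its $r_0$-neighborhood in $G$. The bridge requires two facts that your sketch does not supply: (i) each step of the construction sequence is a Gaifman increasing modification, so by \Cref{lem:modification-tuples} the local type of a tuple at time $t$ determines its local type in the final structure $\str A_m$, where local and global types coincide and hence determine $\phi$; and (ii) for a pair at distance greater than $\rho(2,q)$, the joint local type of the pair is determined by the two individual local types together with the scatter type of the structure (\Cref{lem:far-ltp}, resting on the locality theorem \Cref{thm:localglobal}). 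Classical Gaifman normal form is not an adequate substitute here, precisely because the types must be tracked through every step of the sequence without the quantifier-rank inflation it incurs; this is the rank-preservation problem the paper's dist-FO machinery is designed to solve. So the proposal names the right ingredients, but neither the partition it builds nor the locality argument it invokes would yield a merge sequence of bounded width for $\phi(G)$.
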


For example, with \(\phi(x,y)=\exists z \,E(x,z) \land E(y,z)\),
this shows that the class of squares of graphs from a fixed class of bounded merge-width again forms a class of bounded merge-width, which is already a non-trivial fact.
The theorem moreover implies that classes
of \emph{structurally bounded expansion} \cite{lsd-journal}
-- classes of the form $\phi(\CC)$, for a first-order formula $\phi(x,y)$ and  class $\CC$ of bounded expansion -- also have bounded merge-width.

More generally, a graph class $\CC$ \emph{transduces} a graph class $\DD$,
if there is a formula $\phi(x,y)$ involving the binary edge relation symbol and additional unary relation symbols,
such that every graph $H\in\DD$ can be obtained from some graph $G\in\CC$
in the following steps:
\begin{enumerate}
  \item expand $G$ arbitrarily by unary relations appearing in $\phi$, obtaining a structure~$G'$,
  \item pick an arbitrary subset $W\subset V(G)$,
  \item obtain a graph $H$ with vertices $W$ and edges $\set{u,v}\in {W\choose 2}$ such that $G'\models\phi(u,v)$.
\end{enumerate}
\noindent
The following is essentially a rephrasing of \Cref{thm:interp}, in a slightly more expressive language.
\begin{corollary}\label{cor:transduce}
  Suppose $\CC$ and $\DD$ are graph classes, and that $\CC$ transduces $\DD$.
  If $\CC$ has bounded merge-width, then so does $\DD$.
\end{corollary}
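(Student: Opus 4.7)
I would deduce the corollary from Theorem~\ref{thm:interp} by decomposing a transduction into three elementary operations on graph classes and checking that each preserves bounded merge-width. Following the definition given just above the corollary, the transduction of $\CC$ to $\DD$ factors as (i) a \emph{unary expansion} attaching a fixed tuple of unary predicates to each $G\in\CC$, producing a structure $G'$; (ii) a one-dimensional \emph{interpretation} $G' \mapsto \phi(G')$, defined by the transduction formula $\phi(x,y)$ over the expanded signature; and (iii) an \emph{induced-substructure restriction} to a chosen subset $W \subseteq V(G)$.

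Steps (i) and (iii) are essentially free once one inspects the definition of construction sequences. For (i), merge-width is sensitive only to the binary structure: the partition $\cal P$, the set $R \subseteq {V\choose 2}$ of resolved pairs, and the default connection between any two parts. Unary labels attached to vertices create no new pairs to resolve and do not alter the resolved graph on which radius-$r$ balls are computed. Hence any construction sequence witnessing a bound on $\mw_r(G)$ also witnesses the same bound on $G'$, in the straightforward extension of the definition to structures with unary predicates. For (iii), given a construction sequence $(\cal P_t, R_t)_t$ of a graph $F$ and a subset $W\subseteq V(F)$, I would obtain a construction sequence of $F[W]$ by replacing each part $A\in\cal P_t$ with $A\cap W$ (discarding empty parts) and restricting $R_t$ to ${W\choose 2}$. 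The resulting resolved graph is an induced subgraph of the original, so every radius-$r$ ball in it is contained in the corresponding ball of the original, and the number of parts it meets can only decrease. Thus merge-width cannot increase under induced-substructure restriction.

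For Step (ii) I would invoke Theorem~\ref{thm:interp}. The theorem is stated for formulas in the pure graph signature, but its proof is insensitive to the presence of additional unary predicates: such predicates are absorbed into the atomic type of each vertex and play no role in the combinatorics of merges or resolutions, so the argument extends verbatim to formulas $\phi(x,y)$ over the unary-enriched signature. Composing the three steps yields the corollary. The only real content is Step (ii), which is precisely Theorem~\ref{thm:interp}; Steps (i) and (iii) reduce to direct inspection of the definition of a construction sequence.
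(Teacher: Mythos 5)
Your proposal is correct and follows essentially the same route as the paper: the paper derives both \Cref{thm:interp} and \Cref{cor:transduce} from \Cref{lem:interp}, which is the interpretation theorem stated directly for binary structures (whose signatures already include unary predicates), together with the observation that a unary expansion of a graph has the same merge-width parameters. Your explicit verification that merge-width is monotone under restriction to an induced substructure (step (iii)) is a detail the paper leaves implicit, and your argument for it is sound.
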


\paragraph{Almost bounded merge-width.}
Apart from classes of bounded expansion, another -- arguably even more important --
concept studied in Sparsity, are nowhere dense classes.
A graph class $\CC$ is \emph{nowhere dense} if for every $r\in\N$ there is some $k\in\N$
such that the $r$-subdivision  of the $k$-clique (obtained by replacing each of its edges with a path with $r$ inner vertices) is not a subgraph of any graph in $\CC$.
Those classes strictly extend classes of bounded expansion, and each such class excludes some biclique $K_{t,t}$ as a subgraph. It follows from \Cref{cor:be}
that nowhere dense classes and classes of bounded merge-width are incomparable
(see \Cref{fig:diagram}).

Nowhere dense graph classes can be characterized in terms of generalized coloring numbers:  A~hereditary\footnote{closed under taking induced subgraphs} graph class is nowhere dense if and only if for each fixed $r\in\N$, the $r$-weak coloring number of every $n$-vertex graph in the class is bounded by $n^{o(1)}$ (relaxing the constant bound in classes of bounded expansion).
Motivated by this characterization, we define the following notion.
A graph class has \emph{almost bounded merge-width} if for each fixed ${r\in\N}$,  every $n$-vertex graph in the class has radius-$r$ merge-width at most $n^{o(1)}$.
Trivially, classes of almost bounded merge-width
include all classes of bounded merge-width. Furthermore, they
 include all nowhere dense graph classes (see \Cref{sec:abmw}):

\begin{restatable}{theorem}{thmnwd}\label{thm:nwd}
  Every nowhere dense graph class has almost bounded merge-width.
\end{restatable}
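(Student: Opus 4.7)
The plan is to reuse the construction sequence from the proof of \Cref{thm:be} verbatim, and to replace the crude $2^{\wcol_{r+1}}$ bound on the number of parts hit by a ball by a sharper bound coming from \emph{neighborhood complexity} in nowhere dense classes. I would start from the characterization of nowhere dense hereditary classes by generalized coloring numbers (Grohe--Kreutzer--Siebertz): for every $s\in\N$, every $G\in\CC$ on $n$ vertices satisfies $\wcol_s(G)\le n^{o(1)}$. Fix $r\in\N$, let $G\in\CC$ have $n$ vertices, and let $v_1<\cdots<v_n$ be an ordering realising $k:=\wcol_{r+1}(G)\le n^{o(1)}$. Using this order, I would form the construction sequence described before \Cref{lem:wcol}: at the start of stage $t$, the partition $\cal P_t$ consists of singletons on $S_t:=\set{v_1,\dots,v_{n-t}}$ together with parts of $V\setminus S_t$ grouped by neighborhood in $S_t$, and the resolved edge set is $E_t:=E(G)\cap\binom{V\setminus S_t}{2}$.

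I would then bound the number of parts of $\cal P_t$ intersected by the radius-$r$ ball $B$ around a vertex $v$ in $(V,E_t)$. If $v\in S_t$, then $v$ is isolated in $(V,E_t)$, so $B=\set{v}$ hits only one part. Otherwise $v\in V\setminus S_t$; since $E_t$ lives inside $V\setminus S_t$, also $B\subseteq V\setminus S_t$, and every $u\in B$ is connected to $v$ by a $G[V\setminus S_t]$-walk of length at most $r$. Appending an edge $uw$ with $w\in N(u)\cap S_t$ yields a $G$-walk from $v$ to $w$ of length at most $r+1$ whose minimum in the chosen order is $w$, since $S_t$ is an initial segment and $w$ is the only vertex of the walk inside it. Hence $N(u)\cap S_t\subseteq X$ for $X:=\wreach_{r+1}(v)\cap S_t$, which has $|X|\le k\le n^{o(1)}$, and the parts of $\cal P_t$ touched by $B$ are in bijection with the distinct sets $N_G(u)\cap X$ realized as $u$ ranges over $B$.

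The crux is to count these realized patterns by $n^{o(1)}$ rather than by the trivial $2^{|X|}=2^{n^{o(1)}}$. For this I would invoke the neighborhood-complexity theorem of Eickmeyer, Giannopoulou, Kreutzer, Kwon, Pilipczuk, Rabinovich, and Siebertz: for every nowhere dense class $\CC$ and every $\epsilon>0$ there is a constant $c=c(\CC,\epsilon)$ such that for every $H\in\CC$ and every $X\subseteq V(H)$,
\[
  |\set{N_H(u)\cap X : u\in V(H)}|\le c\cdot|X|^{1+\epsilon}.
\]
Applied to our $X$, this bounds the number of parts by $c\cdot|X|^{1+\epsilon}\le n^{o(1)}$, and combining with the singleton case above gives $\mw_r(G)\le n^{o(1)}$ as required.

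The principal obstacle will be the $n^{o(1)}$ bookkeeping, since the constant $c(\CC,\epsilon)$ in the neighborhood-complexity theorem depends on $\epsilon$; one has to unpack ``$n^{o(1)}$'' as ``bounded by $c_\epsilon\cdot n^\epsilon$ for every $\epsilon>0$'' and verify that composing these estimates still yields a bound of the required form. This is routine, and the combinatorial part of the argument transfers verbatim from the bounded-expansion case, the only substantive new ingredient being the neighborhood-complexity bound.
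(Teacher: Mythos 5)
Your proposal is correct and follows the same skeleton as the paper: the same weak-coloring-number construction sequence from the bounded-expansion case, the same observation that the parts met by a radius-$r$ ball around $v$ are indexed by the traces $N_G(u)\cap X$ on the set $X=\wreach_{r+1}(G,\le,v)$ of size $\le\wcol_{r+1}(G)\le n^{o(1)}$, and the same $n^{o(1)}$ bookkeeping at the end. The only point of divergence is the counting step for these traces. You invoke the neighborhood-complexity theorem of Eickmeyer et al.\ ($c_\epsilon\cdot|X|^{1+\epsilon}$ realized traces in nowhere dense classes), which is a correct but rather heavy import. The paper instead observes that a nowhere dense class excludes the $1$-subdivision of some clique and therefore has VC-dimension bounded by some $d$, so the Sauer--Shelah--Perles lemma already bounds the number of traces by $O(|X|^d)$; this is entirely self-contained and suffices, since $n^{\epsilon d}$ is still $n^{o(1)}$ as $\epsilon$ is arbitrary. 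Your route buys a marginally better exponent at the cost of a much deeper external result; the paper's buys elementarity. Either closes the argument, so there is no gap --- just make sure (as you implicitly do via ``transfers verbatim'') to keep the factor accounting for the passage from $(\cal P_t,R_t)$ to $(\cal P_{t-1},R_t)$, where each part of $\cal P_t$ splits into at most three parts of $\cal P_{t-1}$.
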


Classes of \emph{almost bounded flip-width} are defined \cite{flip-width} analogously
to classes of almost bounded merge-width,
with the merge-width parameters replaced with the flip-width parameters.
The following result parallels \Cref{thm:mw-fw}:
\begin{restatable}{theorem}{thmabmw}\label{thm:abmw}
  Every hereditary class of almost bounded merge-width has almost bounded flip-width.
\end{restatable}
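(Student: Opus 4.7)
The proof should parallel \Cref{thm:mw-fw} but requires that the quantitative dependence there be tame enough to preserve $n^{o(1)}$ bounds. The plan is to extract an explicit polynomial dependence in the conversion from construction sequences to flip-width pursuer strategies, and then plug in the $n^{o(1)}$ merge-width bound supplied by the hypothesis.

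First, I would inspect the proof of \Cref{thm:mw-fw} in \Cref{sec:fw} and extract from it a quantitative statement of the form
\[
   \text{fw}_r(G) \;\le\; p_r\bigl(\mw_{h(r)}(G)\bigr)
\]
for every graph $G$ and every $r \in \N$, where $h\colon\N\to\N$ depends only on $r$ and, for each fixed $r$, $p_r$ is a polynomial. Concretely, the translation described in that section converts a construction sequence of $G$ of radius-$h(r)$ width $w$ into a winning pursuer strategy in the radius-$r$ flip-width game using $p_r(w)$ cops/flips per round; the markers placed by the pursuer correspond to parts of the current partition that are reachable from the robber's location within the relevant radius, and the radius-$r$ width directly upper-bounds the number of such parts.

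Second, fix $r \in \N$ and let $G \in \CC$ be an $n$-vertex graph. By almost bounded merge-width applied to $G$,
\[
   \mw_{h(r)}(G) \le n^{o(1)}.
\]
Substituting into the quantitative bound above yields
\[
   \text{fw}_r(G) \le p_r\bigl(n^{o(1)}\bigr) = n^{o(1)},
\]
because $p_r$ is a polynomial. Since $r$ was arbitrary, $\CC$ has almost bounded flip-width. The hereditary assumption is used because almost bounded flip-width is defined for hereditary classes in \cite{flip-width}, and to ensure that the merge-width bound (and hence the resulting flip-width bound) applies uniformly to every induced subgraph of every $G\in\CC$.

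The main obstacle is verifying that $p_r$ is indeed polynomial (or at least quasi-polynomial) in $\mw$, rather than, say, exponential: a bound of the form $\text{fw}_r(G) \le 2^{\Omega(\mw_{h(r)}(G))}$ would not preserve the $n^{o(1)}$ regime, since $2^{n^{\varepsilon}}$ is far from $n^{o(1)}$. Thus the core technical work is to confirm, or if necessary refine, the polynomial dependence on $w$ in the construction-sequence-to-strategy translation of \Cref{thm:mw-fw}. Once this is established, the remainder of the argument is a direct substitution.
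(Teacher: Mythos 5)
You have correctly located the crux, but you have not resolved it, and the resolution is where all the work lies. The quantitative bound actually delivered by the proof of \Cref{thm:mw-fw} is $\fw_r(G)\le 4^{\mw_{2r-1}(G)}$ (\Cref{lem:fw}): the flip partition is built by classifying vertices according to their membership in a family $\cal F$ of up to $2s$ sets, giving $2^{|\cal F|}\le 4^s$ parts. This is genuinely exponential, not polynomial, and it cannot be "confirmed or refined" to a polynomial bound for arbitrary graphs -- without further structural hypotheses there is no way to bound the number of distinct neighborhood traces on an $s$-element set by a polynomial in $s$. So your plan stalls exactly at the step you flag as "the core technical work," and no argument is supplied there.

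The paper closes this gap with two additional ingredients, both of which are missing from your proposal. First, it shows that a hereditary class in which $\mw_1(G)\le o(|V(G)|)$ has bounded VC-dimension (\Cref{lem:twins-bip}, \Cref{lem:ntn}, \Cref{lem:abmw-vc}); this is the real role of the hereditary assumption, not the definitional convenience you cite. Second, it refines the flip-width strategy so that the flip used in each round is a $\pi_G(s)$-flip rather than a $4^s$-flip, where $\pi_G$ is the atomic (neighborhood) complexity (\Cref{lem:strategy-poly}, \Cref{lem:fw-poly}); the refinement exploits the homogeneity of the parts in $\cal Q_{2r-1}$ towards vertices outside $B_{2r}$ to replace the crude $2^{|\cal F|}$ count by the number of distinct neighborhoods in an $s$-element set. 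Only then does the Sauer--Shelah--Perles lemma convert $\pi_G(s)$ into $O(s^d)$ for the bounded VC-dimension $d$ of the class, after which the substitution $\fw_r(G)\le \pi_G(\mw_{2r}(G))\le O(n^{\eps d})$ goes through as you envisaged. Without the VC-dimension step and the $\pi_G$ refinement, the argument does not close.
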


For a hereditary, weakly sparse graph class (excluding some biclique as a subgraph), nowhere denseness is equivalent to almost bounded flip-width \cite[Thm. X.8]{flip-width}. With \Cref{thm:nwd} and \Cref{thm:abmw}, this implies the following.
\begin{corollary}\label{cor:nd}
  The following conditions are equivalent for a hereditary, weakly sparse graph class $\CC$:
  \begin{enumerate}
    \item $\CC$ is nowhere dense,
    \item $\CC$ has almost bounded merge-width,
    \item $\CC$ has almost bounded flip-width.
  \end{enumerate}
\end{corollary}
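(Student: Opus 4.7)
The proof will be a straightforward assembly of three implications into a cycle, with no new work required beyond invoking the three ingredients already in place. Specifically, I will establish $(1)\Rightarrow(2)\Rightarrow(3)\Rightarrow(1)$.

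For $(1)\Rightarrow(2)$, I apply \Cref{thm:nwd} directly: every nowhere dense class has almost bounded merge-width. Note that this direction does not even use hereditarity or weak sparsity. For $(2)\Rightarrow(3)$, I invoke \Cref{thm:abmw}, which requires that $\CC$ be hereditary -- a hypothesis of the corollary -- and yields that $\CC$ has almost bounded flip-width. For $(3)\Rightarrow(1)$, I cite \cite[Thm.~X.8]{flip-width}, which states precisely that a hereditary, weakly sparse class of almost bounded flip-width is nowhere dense; here both hypotheses of hereditarity and weak sparsity are used.

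Since each arrow is a direct invocation of a previously stated result, there is no substantive mathematical obstacle: the content of the corollary is already split across \Cref{thm:nwd}, \Cref{thm:abmw}, and \cite[Thm.~X.8]{flip-width}. The only thing to be careful about is to check that the hypotheses (hereditarity in particular, and weak sparsity where relevant) are being carried through correctly. In writing the proof, I will simply present the three implications in order, each as a one-line invocation, and then remark that the three implications together give the equivalences.
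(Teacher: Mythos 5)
Your proof is correct and follows exactly the route the paper takes: the paper derives the corollary from \Cref{thm:nwd} (giving $(1)\Rightarrow(2)$), \Cref{thm:abmw} (giving $(2)\Rightarrow(3)$ for hereditary classes), and the cited equivalence of nowhere denseness with almost bounded flip-width for hereditary weakly sparse classes from \cite[Thm. X.8]{flip-width} (closing the cycle). Your attention to where hereditarity and weak sparsity are actually needed matches the paper's (implicit) usage.
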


We conjecture that in general, for hereditary graph classes almost bounded merge-width, almost bounded flip-width, and \emph{monadic dependence}
 coincide
(see \Cref{conj:almostboundedmergewidth} below).

\subsection*{Discussion}
We discuss several possible research directions and open problems.

\paragraph{Combinatorial properties.}
Classes of bounded twin-width and classes of structurally bounded expansion  constitute two prime examples
of classes of bounded merge-width.
Those classes are known to share several combinatorial properties, such as
(polynomial) $\chi$-boundedness, the strong Erd{\H o}s-Hajnal property, or admitting $O(\log(n))$-adjacency labelling sche\-mes \cite{tww2, tww3, lsd-journal}.
We conjecture that classes of  bounded merge-width also enjoy all those properties.

On the other hand, we conjecture that if a hereditary graph class is \emph{small} -- contains at most $n!\cdot 2^{O(n)}$ distinct labeled $n$-vertex graphs -- then it has bounded merge-width. This would extend (by \Cref{cor:be}) a result
\cite{DBLP:conf/innovations/BonnetD0Z25}
 that every hereditary, weakly sparse, small graph class has bounded expansion,
and (by \Cref{cor:tww-mw-ordered}) a result 
\cite{tww4} that every small, hereditary class of ordered graphs has bounded twin-width.
This would also strengthen a result
\cite{braunfeld2023existentialcharacterizationsmonadicnip,flip-breakability} that small, hereditary classes are monadically dependent (see definition below).
If indeed all classes of bounded merge-width admit $O(\log(n))$-adjacency labelling schemes, this would also imply that every hereditary, small graph class admits such a labelling scheme, as conjectured in \cite{bonnet_et_al:LIPIcs.ICALP.2024.31}.

\paragraph{Algorithmic simplicity.}
 Our model-checking algorithm of \Cref{thm:main} relies on technically involved machinery for first-order logic.
 We are curious whether fundamental problems such as \emph{independent set} or \emph{dominating set}
 can be solved on graph classes of bounded merge-width via more direct techniques, and more efficiently --
 in particular, with linear time dependency.

\paragraph{Approximating merge-width.}
A burning issue is the question of obtaining
an fpt approximation algorithm for merge-width.
More precisely, to prove fixed-parameter tractability of the first-order model checking problem over all classes of bounded merge-width,
it would be sufficient to show that for every class $\CC$ of bounded merge-width and $r\in\N$, there is a bound $w$ and an algorithm
which, given a graph $G\in\CC$, outputs
its construction sequence of radius-$r$ width $w$,
 in time $O_{\CC,r}(|V(G)|^c)$, for some universal constant $c$.
 Proving this seems to be challenging.
 Such an approximation algorithm exists
 in two special cases:
 \begin{itemize}
  \item For classes $\CC$ of graphs of bounded merge-width which exclude some biclique $K_{t,t}$ as a subgraph,
  that is, classes of bounded expansion, by  \Cref{cor:be}.
  For those classes, an approximation algorithm for the weak coloring numbers follows from the results of \cite{dvorak-admissibility}. This allows to approximate the merge-width parameters, as follows from the proof of \Cref{thm:be}.
  \item
  For classes $\CC$ of ordered graphs of bounded merge-width
  (more generally, classes with \emph{effectively} bounded twin-width). Those coincide with classes of ordered graphs of bounded twin-width, by \Cref{cor:tww-mw-ordered}.
  For those classes, an approximation algorithm for twin-width exists, by the result of \cite{tww4}. This allows to approximate the merge-width parameters, as follows from the proof of \Cref{thm:tww}.
 \end{itemize}

 In both cases, the algorithms proceed greedily and construct a suitable decomposition, unless  an obstruction to the existence of such a decomposition is encountered. This suggests
 that, in order to approach the problem in general, a suitable understanding of the obstructions to having small merge-width is required. A conjectured, concrete form of such obstructions is discussed below.

 For this reason, an fpt approximation algorithm for merge-width could turn out to be more approachable than an fpt approximation algorithm for twin-width, as even for classes of cubic graphs (in particular, $K_{4,4}$-free graphs),
 a characterization of classes of bounded twin-width in terms of forbidden obstructions seems elusive.

 \paragraph{Merge-width and flip-width.}
As mentioned, we conjecture that classes of bounded merge-width coincide with classes of bounded flip-width. Besides simplifying the picture in \Cref{fig:diagram}, we expect that a proof of this conjecture might lead to an fpt approximation algorithm for merge-width. To approach  both questions, the missing piece might be a duality result, stating the equivalence of the existence of decompositions in the form of construction sequences, and the lack of certain obstructions.
Indeed, both in the sparse setting, and in the ordered setting (see two bullet points in previous paragraph), such a duality result yields both the approximation algorithm as well as the equivalence with flip-width \cite{flip-width}.
Concrete obstructions that might characterize classes of bounded flip-width, and therefore possibly also classes of bounded merge-width, were conjectured in
\cite{flip-width}, in the form of \emph{hideouts},
and -- in a more explicit form -- in \cite[Sec. 4]{flip-breakability}.
Without going into the particulars of those obstructions, we pose the following conjecture.
Following \cite{tww-types-icalp}, we say that a graph class $\CC$ is in the \emph{dense analogue of bounded expansion} if
 for every weakly sparse graph class $\DD$ such that $\CC$ transduces $\DD$,
 the class $\DD$ has bounded expansion (equivalently in this context, by \cite{Dvorak18}, $\DD$ has bounded degeneracy).


\begin{conjecture}\label{conj:bmw}The following conditions are equivalent for a graph class $\CC$:
  \begin{enumerate}
    \item\label{conj1:it1} $\CC$ has bounded merge-width,
    \item\label{conj1:it2} $\CC$ has bounded flip-width,
    \item\label{conj1:it3} $\CC$ is in the dense analogue of bounded expansion.
  \end{enumerate}
\end{conjecture}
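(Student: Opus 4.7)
The two easy directions are $(1) \Rightarrow (3)$ and $(2) \Rightarrow (3)$. For $(1) \Rightarrow (3)$, suppose $\CC$ has bounded merge-width and transduces a weakly sparse class $\DD$; by \Cref{cor:transduce}, $\DD$ also has bounded merge-width, and \Cref{cor:be} then yields that $\DD$ has bounded expansion. The implication $(2) \Rightarrow (3)$ is analogous, using the known closure of bounded flip-width under first-order transductions together with the weakly sparse characterization of flip-width from \cite{flip-width}. Since \Cref{thm:mw-fw} already gives $(1) \Rightarrow (2)$, only $(3) \Rightarrow (1)$ (or equivalently $(3) \Rightarrow (2)$ together with $(2) \Rightarrow (1)$) remains to close the cycle.

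My plan for the hard direction $(3) \Rightarrow (1)$ is to argue the contrapositive via a duality/obstruction theorem. Assuming $\CC$ has unbounded radius-$r$ merge-width for some fixed $r$, the goal is to extract from arbitrarily bad construction-sequence witnesses a canonical combinatorial pattern -- along the lines of the \emph{hideouts} from \cite{flip-width} -- and then transduce it into a weakly sparse class of unbounded degeneracy, for instance the class of subdivided cliques. Concretely, I would proceed in three steps. First, reformulate the failure of bounded merge-width as the non-existence of any construction sequence whose resolved graph has bounded radius-$r$ branching: by contrapositive, in every attempted construction sequence on every graph of $\CC$, some vertex reaches super-constantly many parts within $r$ resolved steps. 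Second, apply a Ramsey-type extraction to these reachability witnesses across arbitrarily large graphs in $\CC$, to obtain a highly uniform substructure invariant in $\CC$. Third, design a fixed first-order formula $\psi(x,y)$ with unary parameters that carves this uniform substructure into a weakly sparse gadget of unbounded degeneracy, thereby witnessing a transduction from $\CC$ that contradicts (3).

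The principal obstacle is the third step: converting an abstract merge-width obstruction into a concrete weakly sparse pattern via a single first-order transduction. Analogous duality theorems are known in the weakly sparse and in the ordered setting (underlying the approximation algorithms referenced in the Discussion), but in full generality the ``forbidden pattern'' characterization of bounded merge-width remains elusive, and even the corresponding question for twin-width is open for cubic graphs. A natural intermediate target is the statement $(3) \Rightarrow (2)$, which seems more tractable because flip-width is defined via pursuit-evasion games for which hideouts are the conjectured dual obstruction. Combined with a separate monotonisation theorem -- showing that any winning pursuer strategy in the flip-width game can be converted into a construction sequence of comparable width, in the spirit of how monotone Cops-and-Robber strategies yield tree decompositions -- this would complete the cycle and would itself be of independent interest.
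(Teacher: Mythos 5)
This statement is a \emph{conjecture}, and the paper does not prove it; it only records which implications are known. Your accounting of the known parts is accurate and matches the paper's: \eqref{conj1:it1}$\rightarrow$\eqref{conj1:it2} is \Cref{thm:mw-fw}, \eqref{conj1:it2}$\rightarrow$\eqref{conj1:it3} is [Thm.~VI.3] of the flip-width paper, and your direct argument for \eqref{conj1:it1}$\rightarrow$\eqref{conj1:it3} via \Cref{cor:transduce} and \Cref{cor:be} is sound (and is essentially the composition of the two known implications). You also correctly identify that the conjecture is settled in the weakly sparse setting (\Cref{cor:be}) and the ordered setting (\Cref{cor:tww-mw-ordered}).

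However, what you present for the hard direction \eqref{conj1:it3}$\rightarrow$\eqref{conj1:it1} is a research programme, not a proof, and you acknowledge as much. The genuine gap is your entire three-step plan: no duality theorem characterizing unbounded merge-width by forbidden obstructions is known, no Ramsey-type extraction of a canonical pattern from failed construction sequences has been carried out, and no transduction from such a pattern to a weakly sparse class of unbounded degeneracy exists in the literature. This is precisely the missing piece the paper itself flags in its Discussion (the conjectured \emph{hideout} obstructions, and the observation that even for cubic graphs the analogous obstruction characterization for twin-width is elusive). Your proposed intermediate targets --- \eqref{conj1:it3}$\rightarrow$\eqref{conj1:it2}, which is [Conj.~XI.7] of the flip-width paper, and a monotonisation theorem converting flipper strategies into construction sequences, i.e.\ the converse of \Cref{thm:mw-fw} --- are both themselves open. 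So the proposal should be read as a correct restatement of the problem's status together with a plausible but unsubstantiated attack plan, not as a proof of the equivalence.
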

The implication \eqref{conj1:it1}$\rightarrow$\eqref{conj1:it2} is given by \Cref{thm:mw-fw} above.
The implication \eqref{conj1:it2}$\rightarrow$\eqref{conj1:it3} is proved in \cite[Thm. VI.3]{flip-width}.
The implication \eqref{conj1:it3}$\rightarrow$\eqref{conj1:it2} is conjectured in \cite[Conj. XI.7.]{flip-width}.
We propose the stronger conjecture, \eqref{conj1:it3}$\rightarrow$\eqref{conj1:it1}.

 \Cref{conj:bmw} holds in two settings:
 \begin{itemize}
  \item for graph classes $\CC$ that exclude some biclique $K_{t,t}$ as a subgraph, due to \Cref{cor:be},
  \item for classes $\CC$ of ordered graphs, due to \Cref{cor:tww-mw-ordered}.
 \end{itemize}

\paragraph{Model checking and monadic dependence.}
The celebrated result of Grohe, Kreutzer, and Siebertz \cite{gks} establishes the fixed-parameter tractability
of the model checking problem
for all nowhere dense classes.
Furthermore, for a weakly sparse graph class $\CC$,
the model checking problem is fixed-parameter tractable on $\CC$ if and only if $\CC$
is nowhere dense (under standard complexity theoretic assumptions).
Similarly, for a hereditary class of \emph{ordered} graphs (see \Cref{cor:tww-mw}),
the model  checking problem is fixed-parameter tractable on $\CC$ if and only if $\CC$ has bounded twin-width \cite{tww4}.
The result of Grohe, Kreutzer, and Siebertz  was subsequently extended to \emph{monadically stable} graph classes \cite{ms-mc1,ms-mc2}.

\emph{Monadic dependence}, which subsumes all those graph classes,
 is a notion which was introduced by Shelah in his momentous classification program in model theory, and is defined as follows:
A graph class $\CC$ is monadically dependent if and only if it does not transduce the class of all graphs.
Monadically dependent classes include all graph classes that
have been discussed so far, including classes of almost bounded merge-width and almost bounded flip-width (see \Cref{fig:diagram}).

\medskip
A central conjecture in algorithmic model theory -- which we call the \emph{model checking conjecture} here -- states that the model checking problem for first-order logic is fixed-parameter tractable for precisely those hereditary graph classes which are
monadically dependent.
Currently,
monadically dependent classes lie beyond the reach of algorithmic methods,
primarily due to the lack of suitable decompositions, similar to the ones
used in the results for nowhere dense, monadically stable, or bounded twin-width graph classes.

We conjecture that monadically dependent classes have almost bounded merge-width.
More precisely, we pose the following.


\begin{conjecture}\label{conj:almostboundedmergewidth}
The following conditions are equivalent for a hereditary graph class $\CC$:
\begin{enumerate}
  \item\label{conj2:it1} $\CC$ has almost bounded merge-width,
  \item\label{conj2:it2} $\CC$ has almost bounded flip-width,
  \item\label{conj2:it3} $\CC$ is monadically dependent.
\end{enumerate}
\end{conjecture}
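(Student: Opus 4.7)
My plan is to establish the cycle \((1)\Rightarrow(2)\Rightarrow(3)\Rightarrow(1)\). The first implication is exactly Theorem \ref{thm:abmw}, already proved in this paper. The implication \((2)\Rightarrow(3)\) follows from \cite{flip-width}, where it is shown that classes of almost bounded flip-width cannot transduce the class of all graphs. Hence the genuinely new content is \((3)\Rightarrow(1)\): from the purely model-theoretic assumption of monadic dependence one must extract, for each fixed \(r\), construction sequences of radius-\(r\) width \(n^{o(1)}\).

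\textbf{Factoring through flip-width.} My plan for \((3)\Rightarrow(1)\) is to route it as \((3)\Rightarrow(2)\Rightarrow(1)\). The first sub-step, \((3)\Rightarrow(2)\), is the ``almost bounded flip-width'' conjecture of \cite{flip-width}. I would attack it using the flip-breakability characterization of monadic dependence from \cite{flipbreakability}: informally, for each radius \(r\) it supplies a bounded-flip oracle that, given a large vertex set \(A\), disentangles the \(r\)-balls around a still-large subset of \(A\). The plan is to feed this oracle into a recursive construction of a pursuer strategy in the flip-width game: at each level, apply the oracle to split the current arena into \(r\)-ball-separated pieces of substantially smaller size, then recurse on each. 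Matching the subpolynomial losses of the oracle against the subpolynomial budget for pursuer flips would close this sub-step.

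\textbf{From flip-width to merge-width.} The second sub-step, \((2)\Rightarrow(1)\), is the \(n^{o(1)}\)-relaxation of the implication \eqref{conj1:it2}\(\Rightarrow\)\eqref{conj1:it1} of Conjecture \ref{conj:bmw}. As explained in Section \ref{sec:fw}, construction sequences correspond to a particularly rigid form of pursuer strategies in the flip-width game. I would aim to show that any efficient winning strategy can be rearranged into this rigid form, losing at most a subpolynomial factor in width. Morally, this is a monotonization theorem in the spirit of the passage from Cops-and-Robber strategies to tree decompositions \cite{seymour-thomas-cops}, but now one must simultaneously schedule the merges of parts and commit to default connections between each pair of parts, all while keeping \(r\)-balls in the evolving resolved graph small.

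\textbf{Main obstacle.} The decisive difficulty is that both sub-steps are open even in the bounded-parameter regime: they are the almost-bounded and bounded incarnations of \cite[Conj. XI.7.]{flip-width} and of Conjecture \ref{conj:bmw}, respectively. The crux is the absence of a combinatorial, vertex-level witness to monadic dependence that can be built greedily, analogous to admissibility orders for weak coloring numbers in the sparse setting (cf. the proof of Theorem \ref{thm:be}). I expect the decisive new ingredient to be a duality theorem pairing radius-\(r\) merge-width with an explicit family of obstructions, plausibly a hierarchical refinement of the hideouts of \cite{flip-width} or the patterns from \cite[Sec. 4]{flip-breakability}. Once such obstructions are shown to be incompatible with monadic dependence, both sub-steps -- and hence the full conjecture -- should follow.
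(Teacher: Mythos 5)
This statement is a conjecture, and the paper does not prove it: it only establishes the implication \eqref{conj2:it1}$\Rightarrow$\eqref{conj2:it2} (via \Cref{thm:abmw}) and cites \eqref{conj2:it2}$\Rightarrow$\eqref{conj2:it3} from prior work, explicitly leaving \eqref{conj2:it3}$\Rightarrow$\eqref{conj2:it1} (and even \eqref{conj2:it3}$\Rightarrow$\eqref{conj2:it2}) open. Your proposal matches the paper's own discussion on the known implications and is commendably honest that the remaining direction is open; as such it is a research plan rather than a proof, and should not be presented as resolving the conjecture. Two small corrections: the implication \eqref{conj2:it2}$\Rightarrow$\eqref{conj2:it3} is due to \cite[Thm.~2.12]{flip-breakability} (see \Cref{cor:abmw-mNIP}), not to \cite{flip-width}; and your proposed factoring \eqref{conj2:it3}$\Rightarrow$\eqref{conj2:it2}$\Rightarrow$\eqref{conj2:it1} requires, in the second sub-step, the almost-bounded analogue of the converse of \Cref{thm:abmw}, which is strictly harder than the bounded-parameter implication \eqref{conj1:it2}$\Rightarrow$\eqref{conj1:it1} of \Cref{conj:bmw} and is likewise open. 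Your diagnosis that a duality theorem pairing merge-width with explicit obstructions is the missing ingredient agrees with the Discussion section of the paper.
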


The implication \eqref{conj2:it1}$\rightarrow$\eqref{conj2:it2} is established in \Cref{thm:abmw}.
The implication \eqref{conj2:it2}$\rightarrow$\eqref{conj2:it3} was proved in \cite{flip-breakability}. The implication \eqref{conj2:it3}$\rightarrow$\eqref{conj2:it2} was conjectured in \cite{flip-width}. We strengthen this conjecture, by conjecturing the implication \eqref{conj2:it3}$\rightarrow$\eqref{conj2:it1}.
This would generalize \Cref{cor:nd}, as for weakly sparse graph classes, monadic dependence coincides with nowhere denseness.

It seems that a proof of this last implication
could provide a key to achieving the missing
decompositions for monadically dependent classes, and provide a crucial step towards the resolution of the model checking conjecture.

\section{Proof overview}\label{sec:overview}
We now discuss the proof of our main result, \Cref{thm:main}.
Our model checking algorithm iteratively computes for each step of a given construction sequence local first-order information.
More precisely,
at any stage of a construction sequence,
we have a structure $A=(V,E,N,\cal P)$,
where $\cal P$ is a partition of $V$ and $E,N\subset {V\choose 2}$ are two disjoint subsets.
We view $A$ as a logical structure by representing each part of the partition $\cal P$ as a separate unary predicate. Thus, the signature of $A$ has two binary predicates and up to $|V|$ unary predicates. However, crucially, every vertex $v$ only reaches a bounded number of distinct unary predicates by paths of length ${\le}r$ in the graph $(V,E\cup N)$, assuming the construction sequence has bounded radius-$r$ width.

 We then employ a key property of first-order logic, called \emph{locality}, which roughly says that, for a fixed structure $A$ and formula $\phi(x)$, whether or not $\phi(x)$ holds at a given vertex $v\in V(A)$ depends only on the neighborhood of $v$ in the Gaifman graph of $A$, of radius depending on the quantifier rank of $\phi$. In our case, the Gaifman graph of the structure $A=(V,E,N,\cal P)$ is just the graph with the resolved pairs, $(V,E\cup N)$.

     Let us give more details.
     Define  the \emph{\(q\)-type} of a vertex \(v\) in a structure \(A'\) as the set of all formulas \(\phi(x)\) of quantifier rank up to \(q\)
     such that \(\phi(v)\) holds in \(A'\).
     Further, define the \emph{local reduct around \(v\)} as the structure \(A'\) obtained from \(A\)
     by keeping only those unary predicates which can be reached from \(v\) in the Gaifman graph via a path of length bounded by a fixed constant depending on $q$.
     See \Cref{fig:reduct} for an illustration.

\begin{figure}
    \centering
    \includegraphics{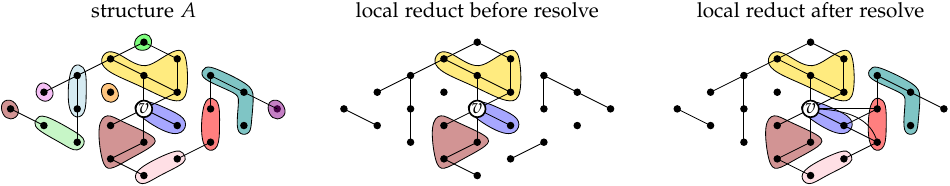}
    \caption{
        \emph{Left:} The structure \(A\). Each color describes a separate unary predicate and black edges describe the Gaifman graph.
        \emph{Center:} The local reduct around \(v\) contains only unary predicates reachable via two steps from \(A\).
        \emph{Right:} The local reduct around \(v\) contains more predicates after a resolve operation (between red and blue).
        Incorporating these additional predicates into the local type is our key challenge.
    }\label{fig:reduct}
\end{figure}

     As an invariant for our model checking algorithm, we compute at each step of a construction sequence, and each vertex \(v\),
     its \(q\)-type in the local reduct around \(v\).
     We call this \emph{the local \(q\)-type} of~\(v\).
     The local types in the final step of the construction sequence then give the answer to the model checking query, as the sequence finishes with all edges of $G$ resolved.
     Crucially, as the merge-width bounds the number of unary predicates in the local reducts,
     this also bounds the number of formulas to consider for the local types, and thus ensures the efficiency of our algorithm.

     The local types are trivial to compute for each vertex in the beginning of the sequence, as there are no edges and non-edges,
     and therefore all vertices carry the same information.
     The challenge is to show that this information can be updated
     with each step of the construction sequence.

     Dealing with merge operations is easy, since those operations only simplify the considered structure,
     by replacing two unary predicates by their union, without affecting the binary predicates.
     The essential difficulty is to deal with resolve operations, as those alter the binary relations,
     and therefore modify the Gaifman graph of the structure.
     Intuitively, suddenly a vertex $v$ may see a lot more structure in its vicinity compared to the previous step.
     Given the local \(q\)-type of a vertex \(v\), computed in a reduct \(A'\) of \(A\) (\Cref{fig:reduct}, center), the difficulty therefore lies
     in computing the \(q\)-type in a richer reduct \(A''\) of \(A\)
     that also contains the unary predicates of \(v\)'s new surroundings (\Cref{fig:reduct}, right).

     While Gaifman's classical theorem implies that the local type of a vertex determines its type in the larger reduct \(A''\), it has one crucial drawback:
     To obtain the \(q\)-type in \(A''\), we need to know beforehand the local \(q^*\)-type for some \(q^* > q\).
     Thus, to compute the local \(q\)-type for the last step, we would have to compute local \(q^*\)-types early on for unbounded values of \(q^*\), which is infeasible.

     In a standalone note~\cite{locality-note} we prove a locality theorem (\Cref{thm:localglobal}), that determines the \emph{\(q\)-type} of a vertex from its \emph{local \(q\)-type} (as well as some global information).
     The groundbreaking work of Grohe, Kreutzer and Siebertz introduced a similar locality theorem
     under the name \emph{rank-preserving normal form}~\cite{gks}.
     Our construction takes great inspiration from this.
     In particular, we also extend first-order logic with additional distance predicates, and rely on similar proof techniques.
     But our result also  differs significantly.
     First, our locality result considers a weaker form of \emph{scatter sentences}, which is easy to evaluate algorithmically (evaluating the scatter sentences of \cite{gks} is as hard as the distance-$r$ independent set problem, which is therefore solved separately in \cite{gks}).
     Second, as Grohe, Kreutzer, and Siebertz applied their locality theorem in the context of a bounded-depth recursion,
     it was permissible to add additional unary predicates to the structure in which the local type is computed.
     As we propagate local information more often, once for each step of the construction sequence,
     we cannot rely on additional unary predicates and therefore cannot use the result of~\cite{gks}.
     For the same reason, the locality theorem of~\cite{ms-mc1} developed in the context of monadically stable classes is also not suitable for our purposes.
     We believe our new locality theorem will serve as a crucial tool in the study of monadically dependent graph classes.

 \paragraph{Outline.}
 In \Cref{sec:prelims}, we start by giving a rigorous definition of merge-width, and prove some basic equivalences.
 Then in \Cref{sec:logic}, we state the locality theorem and the local-subgraph lemma from the standalone locality note~\cite{locality-note}, and in \Cref{sec:mc} we incorporate them in our model checking algorithm.
 Finally, in \Cref{sec:closure} and \Cref{sec:cases} we exhibit closure properties of merge-width and its relationship
 to other structural parameters; we also study classes of almost bounded merge-width.

\section{Preliminaries}\label{sec:prelims}

We now state formally the main definitions of this paper, and prove some basic equivalences. 

       \paragraph{Notation, graphs.}
       For two distinct elements $a,b$ of a set $V$,
by $ab$ we denote the unordered pair $\set{a,b}$, and call $ab$ a \emph{pair} for simplicity
-- the distinction between ordered pairs and unordered pairs will follow from the context.
For $A,B\subset V$, denote $$AB\coloneqq \setof{ab}{a\in A,b\in B,a\neq b}$$ and ${A\choose 2}\coloneqq AA$.
A \emph{graph} is a pair $G=(V,E)$ with $E\subset {V\choose 2}$.
We call the elements of $V$ \emph{vertices}, and elements of $E$ \emph{edges}, and denote $V(G)\coloneqq V$ and $E(G)\coloneqq E$. 

\subsection{Merge-width and merge sequences}\label{sec:merge-sequence}

In the gray box of \Cref{sec:intro} we gave a definition of merge-width in terms of \emph{construction sequences}
(see also \Cref{sec:computing} for a more formal definition, in the more general setting of binary structures).
We now give an alternative definition in terms of so-called \emph{merge sequences}, and prove in \Cref{lem:fmw}
that these definitions are indeed equivalent.
These two definitions complement each other: 
While construction sequences are useful for showing that a graph class with bounded merge-width has a certain property,
merge sequences are useful for the opposing goal of showing that a given graph class has bounded merge-width.

\medskip


    Fix $r\in\N\cup\set{\infty}$.
        For a set $R\subset {V\choose 2}$ and partition $\cal P$ of $V$,
        we say a part \(A \in \cal P\) is \emph{\(r\)-reachable} from a vertex \(v\) if there is a path of length at most $r$ in the graph $G=(V,R)$ from $v$ to some vertex in~$A$.
        The \emph{radius-$r$ width} of $(\cal P,R)$  is 
        \[
           \max_{v \in V}\ \Big|\bigl\{ A \in \cal P \mid  A \text{ is \(r\)-reachable from } v \bigr\}\Big|.
        \]

\begin{definition}\label{def:ms}
A \emph{merge sequence} of a graph $G=(V,E)$ 
is a sequence
$$(\cal P_1,R_1),\ldots,(\cal P_m,R_m),$$
  such that:
  \begin{enumerate}
    \item $\cal P_1,\ldots,\cal P_m$ is a sequence of partitions of $V$, 
    where $\cal P_1$ is the partition into singletons and $\cal P_m$ has one part,
    and $\cal P_{t}$ is coarser than (or equal to) $\cal P_{t-1}$ for $t=2,\ldots,m$,
    \item $R_1\subset\ldots\subset R_m\subset {V\choose 2}$; the pairs in $R_t$ are said to be \emph{resolved} at time $t$, and
    \item for all $t\in [m]$ and $A,B\in \cal P_t$,
     either $AB-R_t\subset E$, or $AB-R_t\subset {V\choose 2} - E$.
  \end{enumerate}
For $r\in\N\cup\set{\infty}$, the \emph{radius-$r$ width} of the merge sequence is the maximum, over $t=2,\ldots,m$,
of the radius-$r$ width of $(\cal P_{t-1},R_t)$.
\end{definition}

Note the offset in the indices of $\cal P_{t-1}$ and $R_t$.
Conceptually, this offset allows us to not only do a single merge, but an unbounded number of merges when going from partition \(\cal P_{t-1}\) to \(\cal P_t\) (see backwards direction of \Cref{lem:fmw}).

\begin{definition}
  The \emph{radius-$r$ merge-width} of a graph $G$, denoted $\mw_r(G)$, is the least radius-$r$ width of a merge sequence of $G$.
Finally, a graph class $\CC$ has \emph{bounded merge-width}
  if ${\mw_r(\CC)<\infty}$ holds\footnote{For a graph parameter $f$ and graph class $\CC$, by $f(\CC)$ we denote $\sup_{G\in\CC}f(G)$.} for every (finite) $r\in\N$.
\end{definition}

\begin{lemma}\label{lem:fmw}
    For every construction sequence of a graph $G$ there is a merge sequence of $G$
  of the same radius-$r$ width, for all $r\in\N\cup\set\infty$,
  and vice versa.
  \end{lemma}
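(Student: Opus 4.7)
The plan is to prove the two directions separately.

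\textbf{Forward direction} (construction sequence $\Rightarrow$ merge sequence). Given a construction sequence, I take its sequence of post-operation states $(\cal P^{(i)}, R^{(i)})_{i=0}^K$ with $R^{(i)} = E^{(i)} \cup N^{(i)}$, and view this sequence directly as a merge sequence via the shift $(\cal P_{i+1}, R_{i+1}) \coloneqq (\cal P^{(i)}, R^{(i)})$. The three merge-sequence conditions follow immediately from construction-sequence semantics: partitions coarsen by merges, resolved sets grow by resolves, and condition 3 of \Cref{def:ms} is exactly the invariant of \Cref{remark:maindef}(2). The key observation for the width is that a merge operation cannot increase the radius-$r$ width of a state, since coarsening the partition only reduces the number of $r$-reachable parts from any vertex. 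Hence the maximum width over all construction-sequence states is attained at a post-resolve state with $\cal P^{(i-1)} = \cal P^{(i)}$, which equals the corresponding merge-sequence contribution at step $i+1$, so the two widths agree.

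\textbf{Backward direction} (merge sequence $\Rightarrow$ construction sequence). Given a merge sequence $(\cal P_t, R_t)_{t=1}^m$ of radius-$r$ width $w$, I simulate it with a construction sequence, processing steps in order. For each step $t$, I first perform resolve operations to add the new pairs in $R_t - R_{t-1}$, then single-merge operations to coarsen the partition from $\cal P_{t-1}$ to $\cal P_t$. Each resolve on a part-pair $(A, B) \in \cal P_{t-1}^2$ is justified by condition 3 at step $t-1$, which makes $AB - R_{t-1}$ homogeneous, so the polarity is uniquely determined (positive if $AB - R_{t-1}$ consists of edges, negative otherwise). Each single merge is justified by condition 3 at step $t$, which gives the default-connection consistency required by \Cref{remark:maindef}(3) for every intermediate partition between $\cal P_{t-1}$ and $\cal P_t$. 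Each intermediate state has width at most that of $(\cal P_{t-1}, R_t) \leq w$, since coarsening the partition and shrinking the resolved set only decrease the width.

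The main obstacle is handling \emph{partial} resolves: when $R_t$ contains only some pairs of $AB - R_{t-1}$ for some part-pair $(A, B) \in \cal P_{t-1}$, a single construction-sequence resolve on $(A, B)$ would resolve all of $AB - R_{t-1}$ at once, overshooting $R_t$ and potentially inflating the width. The remedy is to maintain in the construction sequence a partition strictly finer than $\cal P_{t-1}$: subdivide $A$ and $B$ into sub-parts grouping vertices that share the same $R_t$-profile, so that every required partial resolve becomes a full resolve between sub-parts, and defer the merging of these sub-parts until the merge-sequence resolutions have caught up enough to make the default connections consistent. Verifying that the refined intermediate state still has radius-$r$ width at most $w$ is the technical heart of the proof, since refining the partition can in principle increase the number of reachable parts; however, the refinement is structured precisely by $R_t$, so the additional sub-parts correspond to distinctions that the merge sequence's width already accounts for, which keeps the reachability bounded by $w$.
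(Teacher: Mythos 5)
Your forward direction is essentially the paper's argument and is fine: since each step of a construction sequence is either a merge (which fixes $R$ and coarsens $\cal P$) or a resolve (which fixes $\cal P$), the offset pair $(\cal P_{t-1},R_t)$ always coincides with one of the states of the construction sequence, so the widths agree.

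The backward direction has a genuine gap, located exactly where you place "the technical heart of the proof." Refining $A$ and $B$ into sub-parts according to $R_t$-profiles does \emph{not} preserve the radius-$r$ width, and the claim that "the additional sub-parts correspond to distinctions that the merge sequence's width already accounts for" is false: the merge sequence's width counts reachable parts of $\cal P_{t-1}$, and splitting one such part into many sub-parts can multiply that count arbitrarily. Concretely, take parts $A=\{a_1,\dots,a_n\}$, $B=\{b_1,\dots,b_n\}$ with $R_t\cap AB=\{a_ib_j: i\le j\}$ (a half-graph). Every vertex of $B$ has a distinct $R_t$-profile, so your refinement splits $B$ into $n$ singletons, and $a_1$ is $R_t$-adjacent to all of them: the radius-$1$ width jumps from $2$ to $n$. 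A secondary structural problem is that profiles with respect to $R_{t+1}$ refine those with respect to $R_t$, while a construction sequence may only coarsen its partition, so the deferred re-merging you describe cannot be scheduled consistently; in the worst case nothing can be merged until the end and the width again degenerates.

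The fix is not to refine the partition but to \emph{under-resolve}: in stage $t$ you resolve only those part-pairs $A,B\in\cal P_t$ with $AB\subseteq R_{t+1}$, so the construction sequence's resolved set $R$ stays a subset of $R_{t+1}$ and its partition stays a coarsening of $\cal P_t$; both facts can only decrease the width relative to $(\cal P_t,R_{t+1})$. Partially resolved part-pairs are simply left alone until a later stage at which they become fully contained in some $R_{s}$ (at the latest, the final self-resolve of the single part $V$). What then requires proof is that each such deferred resolve is still a legal single positive or negative resolve, i.e.\ that $AB-R$ is homogeneous even though $R$ may be strictly smaller than $R_t$. This follows by induction on $t$ from condition 3 of the merge sequence: if $A'B'$ is a $\cal P_{t+1}$-part-pair, then $A'B'-R$ is a disjoint union of sets $AB-R$ over $\cal P_t$-part-pairs with $AB\not\subseteq R_{t+1}$; each such set is homogeneous by induction and contains a pair of $A'B'-R_{t+1}$, which is homogeneous by condition 3, so all these sets carry the same label. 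Your proposal is missing both this replacement strategy and this homogeneity argument.
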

  \begin{proof}
  
  First, fix a construction sequence of $G$, consisting of $m$ steps. 
    Let $\cal P_t$ be the partition of $V$,
    and let $R_t$ be the set of edges and non-edges 
    at the beginning of step $t$ of the sequence.
    It is clear that 
    $(\cal P_1,R_1),\ldots,(\cal P_m,R_m)$ is a merge sequence. 
    In particular, the third item of \Cref{def:ms} follows from the second item of \Cref{remark:maindef}.
    
    Fix $r\in\N\cup\set{\infty}$, and let $k$ 
    be the radius-$r$ merge-width of the considered construction sequence.
    Then, for each $t\in[1,m]$, 
    the radius-$r$ width of $(\cal P_t,R_t)$ is at most $k$.
    Since for $t>1$, we either have $\cal P_{t-1}=\cal P_t$ or $R_{t-1}=R_t$, it follows that 
    the radius-$r$ width of $(\cal P_{t-1},R_t)$ is at most $k$ as well.
    This concludes one direction.

  \medskip
  
  In the other direction,  
  let $(\cal P_1,R_1),\ldots,(\cal P_m,R_m)$ be a merge sequence of $G$.
  We define a construction sequence which consists of $m$ stages, as follows.
  In the beginning of stage $t\in[m-1]$,
  the current partition $\cal P$ is the partition $\cal P_t$, and the set $R$ of resolved pairs is contained in $R_t$.
  In stage $t$, first resolve (in any order) all pairs 
  $A,B\in\cal P_{t}$ such that $AB\subset R_{t+1}$.
  The third item of \Cref{def:ms} guarantees that this is always possible by positive or negative resolves.

  Next, repeatedly merge (in any order) any two parts $A,B$ of the current partition, as long as $A\cup B$ is contained in some part of $\cal P_{t+1}$.
  Once this is done, the current partition is equal to $\cal P_{t+1}$, and we proceed to the next stage.
  
  During all steps in stage $t$, the set $R$ of resolved pairs is contained in $R_{t+1}$, and $\cal P$ is a coarsening of $\cal P_t$. In particular, for every $r\in\N\cup\set{\infty}$,
  \begin{multline*}
\text{radius-$r$ width of $(\cal P,R)$}\ \le \ 
\text{radius-$r$ width of $(\cal P_t,R_{t+1})$} \\
\le \ \text{radius-$r$ width of the given merge sequence}.
  \end{multline*}
 
  After the last stage, the current partition $\cal P$ has only the single part $V$. 
  Finally, we resolve $V$ with itself. This completes the  construction sequence.
  \end{proof}

\subsection{Construction sequences have linear length}\label{sec:representing}

When designing merge-width based algorithms,
we always assume graphs to be represented by construction sequences, rather than merge sequences,
as the former impose more structure.
A~priori such a construction sequence can be arbitrarily long, as for example, the same pair of parts can be resolved indefinitely.
In this subsection we make an innocuous assumption that guarantees a linear bound on their length.

Namely, call a resolve operation in a construction sequence 
\emph{effective} if it creates a non-zero number of new edges or non-edges. 
That is, an \emph{ineffective} resolve operation 
is a resolve operation performed between a pair $A,B$ of parts 
such that all pairs in $AB$ were already resolved before the operation.
Call a construction sequence \emph{effective} if 
every resolve operation in it is effective.
We assume without loss of generality and without explicitly mentioning it that all construction sequences are effective.
Note that the following lemma also applies if the radius-\(r\) width is bounded, as this implies a bound on the radius-\(1\) width.

\begin{lemma}\label{lem:effective-construction-seq}
  Every effective construction sequence of a graph $G$ of radius-$1$ width $w$ has length at most $(2w+1)|V(G)|$.
\end{lemma}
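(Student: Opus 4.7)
Write $n=|V(G)|$. The plan is to bound separately the three types of operations: merges, effective self-resolves, and effective cross-resolves (those between two distinct parts $A\neq B$). The number of merges is exactly $n-1$, since each merge reduces $|\cal P|$ by one and we go from $n$ singletons to the single part $V$. For self-resolves, note that each part $P$ ever appearing during the sequence can be effectively self-resolved at most once: once all pairs in ${P\choose 2}$ have been resolved, any further self-resolve of $P$ is ineffective. Exactly $2n-1$ parts ever appear (the $n$ initial singletons plus one new part per merge), so there are at most $2n-1$ effective self-resolves.

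The bulk of the work is to bound the number of effective cross-resolves by $2(w-1)(n-1)$. I would introduce, for each part $P\in\cal P$, its \emph{partner set}
\[
  S_P \coloneqq \{Q \in \cal P : Q\neq P \text{ and every pair in } PQ \text{ is resolved}\},
\]
and first establish the inequality $|S_P|\le w-1$ at every step. Indeed, for any $v\in P$, every $Q\in S_P$ is $1$-reachable from $v$ in the graph of resolved pairs (since $v$ has a resolved neighbor in $Q$), and $P$ itself is $1$-reachable; hence $1+|S_P|\le w$ by the radius-$1$ width hypothesis.

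Next I would analyze the potential $\Phi \coloneqq \sum_{P\in\cal P}|S_P|$, which equals $0$ at the start (singletons with no resolved pairs) and at the end (a single part). Direct inspection of the three operation types gives: an effective cross-resolve $(A,B)$ increases $\Phi$ by exactly~$2$, since it newly places $A$ into $S_B$ and $B$ into $S_A$; a self-resolve leaves $\Phi$ unchanged; and a merge of $X,Y$ into $Z$ satisfies $\Delta\Phi\le 0$, with the decrease bounded by $2|S_X\cup S_Y|\le 4(w-1)$. Since $\Phi$ returns to $0$, total increase equals total decrease, and so
\[
  2\cdot(\text{\# cross-resolves}) \;\le\; \sum_{\text{merges}} 2\,|S_X\cup S_Y| \;\le\; 4(w-1)(n-1).
\]
Combining the three bounds yields total length at most $(n-1)+(2n-1)+2(w-1)(n-1)=(2w+1)n-2w\le(2w+1)n$, as required.

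The main obstacle will be the careful accounting of $\Delta\Phi$ at a merge step. Concretely, I must verify that $S_Z=S_X\cap S_Y$ when $Z$ is born, add in the death contributions $-|S_X|-|S_Y|$, and correctly count that each third part $P\in(S_X\cup S_Y)\setminus\{X,Y\}$ loses exactly one partner (either it loses $X$ or $Y$ without gaining $Z$, or it loses both and gains $Z$). An edge case that must be handled is when $(X,Y)$ itself is settled just before the merge, so that $X\in S_Y$ and $Y\in S_X$; once this is accounted for, one sees that $\Delta\Phi$ remains non-positive and that the inequality above survives.
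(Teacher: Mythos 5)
Your proof is correct, and it takes a genuinely different route from the paper's. The paper argues by \emph{rearranging} the sequence: it postpones every resolve not involving the two parts $A,B$ about to be merged until after that merge, observes that the rearranged sequence is still a valid construction sequence of the same length and width, and then notes that at most $2w$ effective resolves can remain attached to each merge (at most $w$ pairs involving $A$ and at most $w$ involving $B$, since each vertex of $A$ $1$-reaches every part resolved with $A$). You instead leave the sequence untouched and run an amortized analysis: the potential $\Phi=\sum_P|S_P|$ starts and ends at $0$, each effective cross-resolve contributes $+2$, self-resolves contribute $0$, and each merge contributes a drop of at most $2|S_X\cup S_Y|\le 4(w-1)$, which charges the cross-resolves to the merges; self-resolves are bounded separately by the $2n-1$ parts that ever exist. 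All the steps check out: the symmetry $Q\in S_P\Leftrightarrow P\in S_Q$ gives $\Delta\Phi=-|S_X\cup S_Y|-|(S_X\cup S_Y)\setminus\{X,Y\}|$ at a merge, an effective resolve of $\{A,B\}$ indeed adds exactly $A$ to $S_B$ and $B$ to $S_A$ and nothing else, and $|S_P|\le w-1$ follows from the radius-$1$ width bound exactly as you say. Your final count $(2w+1)n-2w$ even slightly beats the stated bound. What your approach buys is that it avoids the rearrangement step, whose validity (that postponing resolves past a merge yields a legal construction sequence of the same width) the paper asserts rather briskly; the cost is the more delicate bookkeeping at merges, which you have handled correctly, including the edge case where $X\in S_Y$ just before the merge.
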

\begin{proof}
  We rearrange the construction sequence by repeatedly postponing resolve operations.
  Namely, consider a subsequence of consecutive operations in a construction sequence,
  consisting of a sequence of resolve operations, and finishing with the merge
  of two parts $A,B\in\cal P$. We may postpone all resolves which occur in the subsequence, and involve 
  neither $A$ or $B$, until after the merge of $A$ and $B$ is done.
  This result in a valid construction sequence,
  of the same radius-$1$ width and the same length.
  As all resolve operations are effective,
  at most $2w$ resolve operations between pairs $P,Q$ remain in the subsequence: at most $w$ pairs involving $A$, and at most $w$ pairs involving $B$, because each vertex in $A$ (or in $B$) $1$-reaches all parts resolved with $A$ (resp. $B$). 
  In the final construction sequence, at most $2w$ resolve operations are performed per merge,
  followed potentially by a single last resolve on the final part.
  As there can be at most \(|V(G)|-1\) merges, we have at most $(2w+1)|V(G)|$ steps altogether.
  Since the original construction sequence has the same length as the rearanged construction sequence, the result follows.
\end{proof}

\section{A locality theorem}\label{sec:logic}
Most of the material in this section is taken from the standalone note~\cite{locality-note}.
We state here the definitions and results from that note that are used in the sequel;
the proofs are deferred to~\cite{locality-note}.
We use a restricted form of the result of \cite{locality-note}, which is sufficient for the purpose of this paper.
The main result is the following locality theorem (see below for definitions).
\begin{restatable}[{\cite{locality-note}} Locality theorem for \distFO]{theorem}{localglobalgame}\label{thm:localglobal}
        Fix $k,q \in \N$. Every \distFO formula $\phi(\bar x)$ of distance rank $(k,q)$
     is equivalent to a boolean combination of
    local \distFO formulas and \distFO scatter sentences, all of distance rank $(k,q)$.    
    This boolean combination can be effectively computed, given $k,q$, and~$\phi$.
\end{restatable}
In \Cref{sec:types} we introduce various notions of \emph{types} and translate \Cref{thm:localglobal} into this framework.
Using this, in \Cref{sec:mc} we show that given a binary structure $A$ and the local type of each vertex in it, we can efficiently compute the new local type of each vertex following a merge operation or a resolve operation.

\subsection{Preliminaries}
 $\N$ denotes the set of non-negative integers. For $n\in\N$, denote $[n]\eqdef\set{1,\ldots,n}$.

We consider finite relational signatures only.
A structure $A$ over the signature $\sigma$,
 or a \emph{$\sigma$-structure}, consists of a (possibly infinite) domain $V(A)$ and the interpretation
  $R^A\subset V(A)^k$ of each relation symbol $R\in \sigma$ of arity $k$.
The \emph{Gaifman graph} of a structure $A$ is the graph with vertices $V(A)$ and edges connecting pairs of distinct vertices which occur together in some tuple of some relation of $A$. By $\dist^A(\cdot,\cdot)$ we denote the shortest path metric  in the Gaifman graph of $A$, and for $r\in\N$ and $u\in V(A)$ denote $N^A_r(u)\coloneqq\setof{v\in V(A)}{\dist^A(u,v)\le r}$.




\subsection{Distance logic and scatter sentences}\label{sec:typedefs}

\paragraph{First-order distance logic.}
We define a logic called \distFO that extends FO by the following \emph{distance atoms}.
\begin{quote}
     For each radius \(r \in \N\), \distFO introduces
        a \emph{binary distance atom} \(\dist(x,y) \le r\) expressing that the distance between \(x\) and \(y\) in the Gaifman graph is at most \(r\).
        We call \(r\) the \emph{radius} of the distance atom.

     For each radius \(r \in \N\) and unary relation symbol \(Y\), \distFO introduces
        a \emph{unary distance atom} \(\dist(x,Y) < r\) expressing that the distance from \(x\) to \(Y\) in the Gaifman graph is smaller than \(r\);
        equivalently, \(\exists y~Y(y) \land \dist(x,y) < r\).
        We call \(r\) the \emph{radius} of the distance atom.
\end{quote}
Formulas of \distFO are built inductively,
starting with usual atomic formulas of first-order logic (relation symbols or equality applied to variables), distance atoms, boolean connectives, and existential quantification (universal quantification is then expressible using negation).

For a \distFO formula $\phi(x_1,\ldots,x_k)$, structure $A$, 
and  tuple $(a_1,\ldots,a_k)\in V(A)^k$
we write $A\models\phi(a_1,\ldots,a_k)$
to denote
that $\phi(a_1,\ldots,a_k)$ holds in $A$, which is defined by induction on the structure of $\phi$, in the expected way.

For a set $\bar x$ of variables, 
we write $\phi(\bar x)$ to indicate that $\bar x$ contains the free variables of the formula $\phi$.
We write \(\dist(\bar x,y) \le r\) as shorthand for \(\bigvee_{x \in \bar x}\dist(x,y) \le r\), and
\(\dist(\bar x,\bar y) \le r\) as shorthand for
\(\bigvee_{x\in \bar x, y \in \bar y}\dist(x,y) \le r\).


\paragraph{Horizon functions.}
Fix the following two \emph{horizon functions} $\rho^-,\rho^+\from \N^2\to \N_{\ge 1}$:
\begin{align*}
   \rho^-(k,q) \eqdef 9^{(k+q+1)q} \quad\quad\quad\text{and}\quad\quad\quad
   \rho^+(k,q) \eqdef 9^{(k+q)(q+1)}.
\end{align*}
(In \cite{locality-note}, more general functions are allowed, but the above choice is sufficient for this paper.)
The functions \(\rho^-\) and \(\rho^+\) will be used to define the allowed radius of distance atoms in \distFO formulas and scatter sentences of a given \emph{distance rank} (defined below), as well as the radius at which \emph{local quantification} is allowed.

\paragraph{Distance rank.}
Let \(k,q \in \N\).
We define \emph{\distFO formulas of distance rank} \((k,q)\) by induction on~$q$,
as formulas with at most $k$ free variables and
quantifier rank at most $q$ which are boolean combinations of:
\begin{itemize}
    \item atoms of first-order logic (relation symbols in $\sigma\cup\set{=}$ applied to variables),
    \item unary and binary distance atoms with radius  \(r\le \rho^-(k,q)\),
    \item if $q\ge 1$, formulas
        \[
            \exists y~\phi(\bar x y)
            \quad\quad\text{or}\quad\quad
            \exists y~\bigl(\dist(\bar x,y) \le \rho^+(k+1,q-1)\bigr) \land \phi(\bar x y)
        \]
        where \(\phi(\bar x y)\) has distance rank \((k+1,q-1)\).
\end{itemize}
A \distFO formula is \emph{local}
if it never uses the \emph{unrestricted quantification} $\exists y~\phi$
(only \emph{local quantification}, that is, of the latter form above, is allowed).

\paragraph{Scatter sentences.}
A set \(S\) of vertices of a structure $A$ is \emph{\(r\)-scattered} if all vertices in \(S\)
have pairwise distance larger than \(r\) in the Gaifman graph of $A$. (Thus, a \(1\)-scattered set is an independent set.)

For every finite structure $A$, radius \(r\), formula \(\beta(x)\), 
define $s_{A,r,\beta}$ as the size of the inclusion-wise maximal $r$-scattered subset 
of \(X\coloneqq\{ a \in V(A) \mid A \models \beta(a)\}\) obtained by the following greedy process.
Assuming \(v_1,\dots,v_i\) have already been chosen, select \(v_{i+1}\) as the smallest element of~$X$ (with respect to some arbitrary, fixed order on \(V(A)\))
 which is of distance larger than \(r\) in the Gaifman graph of $A$ from \(v_1,\dots,v_i\).
If no such \(v_{i+1}\) can be found, terminate and set $s_{A,r,\beta} \coloneqq i$.
Note that the value $s_{A,r,\beta}$ can be efficiently computed,
given $A$, an order on $V(A)$, and $X$.
(In \cite{locality-note}, other choices of the value $s_{A,r,\beta}$ are allowed, but the above choice is sufficient in this paper. For infinite structures $A$, which are not considered in this paper, the definition of $s_{A,r,\beta}$ can be extended in various ways.)

We define logical \emph{scatter sentences}
\(\textbf{scatter}(r,\beta(x),t)\) with \(A \models \textbf{scatter}(r,\beta(x),t)\) if and only if $s_{A,r,\beta}\ge t$.
We define \emph{scatter sentences of distance rank \((k,q)\)}, for \(q\ge 1\),
as all scatter sentences \(\mathbf{scatter}(r,\beta(x),t)\) such that
\(t\le k+q\) and
for some \(i\in\{1,\ldots,q\}\),
\(\beta(x)\) is a local \(\distFO\) formula of distance rank \((k+i,q-i)\), and 
\begin{align}\label{eq:scatter-radius}
4\rho^-(k+i,q-i)\le r\le 9^{k+i}\rho^-(k+i,q-i)\le \rho^-(k,q).
\end{align}

We have defined all the notions occurring in the statement of \Cref{thm:localglobal}, repeated below.
\localglobalgame*

\paragraph{Some facts.}
We repeat some statements from \cite{locality-note}.

\begin{observation}[{\cite[Observation 2.1]{locality-note}}]\label{obs:preservedistrank}
    Every \distFO formula of distance rank \((k+1,q-1)\) with at most \(k\) free variables also has distance rank \((k,q)\).
\end{observation}

\begin{observation}[{\cite[Observation 2.3]{locality-note}}]\label{obs:preservescatterrank}
    Every scatter sentence of distance rank \((k+1,q-1)\) is also a scatter sentence of distance rank \((k,q)\).
\end{observation}

The following lemma states that for local \distFO formulas, the truth value is determined by a bounded-radius neighborhood of the free variables. 

\begin{lemma}[Semantic locality, {\cite[Lemma~2.2]{locality-note}}]\label{lem:ltp-local-subgraph}
    Let \(k,q \in \N\) with $k\ge 1$, and let
    \(\phi(\bar x)\) be a local \distFO formula of distance rank \((k,q)\), and let $r=\rho^-(k,q)$.
    Then  for every structure $A$ and tuple $\bar a\in A^{\bar x}$,
we have that
$$A\models \phi(\bar a) \quad\Longleftrightarrow\quad A[N_r(\bar a)]\models \phi(\bar a).$$
\end{lemma}

\begin{lemma}[Separation lemma, {\cite[Lemma 3.1]{locality-note}}]\label{lem:far-ltp}
    Fix $k,q\in\N$ and let $\phi(\tup x,\tup y)$ be a local \distFO formula of distance rank $(k,q)$.
    The formula
    $$\Big(\dist(\tup x,\tup y)>\rho^-(k,q)\Big) \ \land\ \phi(\tup x,\tup y)$$
    is equivalent to a disjunction of formulas of the form
    $$\Big(\dist(\tup x,\tup y)>\rho^-(k,q)\Big) \ \land\  \alpha(\tup x)\land \beta(\tup y),$$
    where $\alpha$ and $\beta$ are local \distFO formulas of distance rank $(k,q)$.
    Moreover, this disjunction can be effectively computed, given $k,q$, and $\phi$.
\end{lemma}

\subsection{Types}\label{sec:types}
We define two variants of \emph{types} for \distFO,
following the standard notions from model theory.

Fix $k,q\in\N$ and a finite relational signature $\sigma$.
Fix a $\sigma$-structure \(A\) and tuple of vertices \(\bar a\in V(A)^\ell\) with $\ell\le k$.
\begin{itemize}
    \item The  \emph{$(k,q)$-type}
of \(\bar a\) in~\(A\)
denoted \(\tp_{k,q}(A,\bar a)\), is the set of all \distFO formulas \(\phi(x_1,\ldots,x_\ell)\) of distance rank \((k,q)\)
with \(A \models \phi(\bar a)\).
\item
The \emph{local $(k,q)$-type}
of \(\bar a\) in~\(A\)
denoted \(\ltp_{k,q}(A,\bar a)\), is the set of all local \distFO formulas \(\phi(x_1,\ldots,x_\ell)\) of distance rank \((k,q)\)
with \(A \models \phi(\bar a)\).
\item
The \emph{scatter \((k,q)\)-type} of \(A\), denoted \(\stp_{k,q}(A)\) is the set of all scatter sentences of distance rank \((k,q)\) that hold in \(A\).
\end{itemize}
We assume all formulas to be normalized\footnote{This is defined by induction on the quantifier rank of the formula, by saying that a \emph{normalized} formula of quantifier rank $q$ is a boolean combination of normalized formulas of quantifier rank $q-1$ or atomic formulas, where the boolean combination is in disjunctive normal form with no two identical disjuncts and no two identical conjuncts in one disjunct. Every \distFO formula can be effectively rewritten into an equivalent normalized formula, without changing its rank, or its distance rank.}, and thus the size of each type is bounded in terms of \(k\), \(q\), and the signature of the considered structure.
Furthermore, we assume that all those types store additionally the information about $\sigma$, $k$, and $q$
(as well as $\ell$, in the case of $\ltp$ and $\tp$).

We rephrase \Cref{thm:localglobal} as follows.

\begin{theorem}[Reformulation of \Cref{thm:localglobal}]\label{thm:localglobal'}
    Fix $k,\ell,q\in\N$ with $\ell\le k$
    and a finite relational signature $\sigma$.
For every $\sigma$-structure $A$ and tuple $\tup a\in V(A)^\ell$,
the type $\tp_{k,q}(A,\tup a)$ depends (effectively) only 
on the local type $\ltp_{k,q}(A,\tup a)$ and on the scatter type $\stp_{k,q}(A)$.

More precisely, there is a computable function $f$ 
such that for every finite relational signature $\sigma$, for all $k,q,\ell\in\N$ with $\ell\le k$, and for every $\sigma$-structure $A$ and tuple $\tup a\in V(A)^\ell$ we have
$$\tp_{k,q}(A,\tup a)=f(\ltp_{k,q}(A,\tup a), \stp_{k,q}(A)).$$
\end{theorem}
\begin{proof}
Fix $\sigma,k,\ell,q, A,\tup a$ as above.
Let $\tau=\ltp_{k,q}(A, \tup a)$ and $\tau'=\stp_{k,q}(A)$.
We show how to compute $\tp_{k,q}(A,\tup a)$ by an effective procedure, given $\tau,\tau'$, and not  $A$ and $\tup a$.

Note that for every (suitably normalized) local \distFO formula $\psi(x_1,\ldots,x_\ell)$ of distance rank $(k,q)$,
 $$A\models \psi(\bar a)\quad\iff\quad \psi(x_1,\ldots,x_\ell)\in \tau=\ltp_{k,q}(A,\bar a).$$ 
  This can be determined effectively, given $\psi$ and $\tau$.

  Similarly, given a (suitably normalized) scatter sentence $\alpha$, 
  it holds that 
  $$A\models \alpha\quad\iff\quad \alpha\in \tau'= \stp_{k,q}(A),$$ 
  and this can be determined effectively, given $\alpha$ and $\tau'$.

By induction, 
for every boolean combination $\psi(\bar x)$
of local \distFO formulas and scatter sentences of distance rank $(k,q)$,
whether or not $A\models \psi(\bar a)$ holds can be effectively determined basing only $\psi$ and on $\tau$ and $\tau'$ (and not on $A$ and $\bar a$).

    Next, for every \distFO formula $\phi(x_1,\ldots,x_\ell)$ of distance rank $(k,q)$, we can determine 
    whether $A\models\phi(\bar a)$, as follows. 
Apply \Cref{thm:localglobal} to compute an equivalent formula $\psi(x_1,\ldots,x_\ell)$ which is a boolean combination 
of local \distFO formulas and scatter sentences of distance rank $(k,q)$
(each of which can be normalized effectively).
Then test whether $A\models \psi(\bar a)$, as described earlier.
This depends effectively only on $\phi$, $\tau$, and $\tau'$.

Finally, we can compute $\tp_{k,q}(A,\tup a)$ as the set of all (normalized) \distFO formulas $\phi(x_1,\ldots,x_\ell)$ of distance rank $(k,q)$ such that $A\models\phi(\bar a)$.
Since the set of all normalized \distFO formulas $\phi(\bar x)$ of distance rank $(k,q)$ can be effectively computed, and for each of them we can determine whether $A\models\phi(\bar x)$ as described above,
this shows that $\tp_{k,q}(A,\tup a)$ effectively depends only on $\tau$ and $\tau'$.
\end{proof}

We also rephrase \Cref{lem:far-ltp} as follows (we omit the proof which follows similarly as the proof of \Cref{thm:localglobal'}).
\begin{lemma}\label{lem:far-ltp'}
    For every $\sigma$-structure $A$, numbers $k,q\in \N$ and 
    tuples 
    $\tup a\in V(A)^{\tup x}$ and $\tup b\in V(A)^{\tup y}$ with $|\tup x|+|\tup y|\le k$ 
    and $\dist^A(\tup a,\tup b)>\rho^-(k,q)$,
    $\ltp_{k,q}(A,\tup a\tup b)$ depends (effectively) only on $\ltp_{k,q}(A,\tup a)$
    and $\ltp_{k,q}(A,\tup b)$.
\end{lemma}

\section{Model checking}\label{sec:mc}
In this section, we prove the main model-checking theorem, \Cref{thm:main}.
The proof uses a slightly more general statement for binary structures, \Cref{thm:main-algorithm}.
We proceed to define the notions of construction sequences and merge-width, and of types for binary structures, which are needed to state and prove \Cref{thm:main-algorithm}.
We rely on the locality theorem, the local-subgraph lemma, and the far-apart decomposition from the standalone locality note~\cite{locality-note}.

\subsection{Construction sequences for binary structures}\label{sec:computing}
\newcommand{\merge}[2]{\mathrm{merge}_{#1,#2}}
\newcommand{\resolve}[3]{\mathrm{resolve}^{#1}_{#2,#3}}
In \Cref{sec:mc}, we consider binary relational signatures only.
More precisely, a (binary, relational) \emph{signature} $\sigma$ consists of a set $\sigma_2$ of binary relation symbols, and a set $\sigma_1$ of unary relation symbols.
A \emph{binary structure} is a structure over a binary relational signature.

We first define the notion of construction sequences, and of merge-width, for binary structures.
Throughout \Cref{sec:mc}, fix a signature $\sigma$
consisting of finitely many unary and binary relational symbols \(\sigma_1\) and \(\sigma_2\).
Let $\pi$ consist of infinitely many unary predicates $P_1,P_2,\ldots$ not occurring in $\sigma$,
and let $\sigma^*\coloneqq\sigma\cup\pi$.
We call the predicates in $\pi$ \emph{part predicates}.
A \emph{partitioned $\sigma$-structure} (called partitioned structure for brevity)
is a $\sigma^*$-structure $\str A$ in which
the predicates $P_i\in\pi$ define pairwise disjoint subsets of $\str A$, which cover $V(\str A)$.
In particular, at most $|V(\str A)|$ part predicates are non-empty.

For a partitioned structure $\str A$ and
 part predicates $P,Q\in \pi$ and binary relation $R\in\sigma_2$,
define the following partitioned structures:
\begin{itemize}
\item $\resolve RPQ(\str A)$, defined as the partitioned structure $\str B$ obtained from $\str A$ by setting $$R^\str B\coloneqq R^\str A\cup (P^\str A\times Q^\str A-\bigcup_{S\in\sigma_2} S^\str A),$$ and leaving all other predicates as in $\str A$.

\item $\merge PQ(\str A)$, defined as the partitioned structure $\str B$ obtained from $\str A$
 by setting $$P^{\str B}\coloneqq P^{\str A}\cup Q^{\str A}\text{\quad and \quad} Q^{\str B}\coloneqq\emptyset.$$
\end{itemize}

 For a partitioned $\sigma$-structure $\str A$, $\reach_r^{\str A}(a)\subset \sigma_1\cup\pi$ denotes the set of unary predicates that are reachable from $a$ by a path of length at most $r$ in the Gaifman graph of~$\str A$.
Moreover, the \emph{radius-$r$ width} of a partitioned structure $\str A$
was defined as $$\max_{a\in V(\str A)}\bigl|\reach_r^{\str A}(a)\bigr|.$$

\begin{definition}[Construction sequences]
An \emph{initial} partitioned structure
is a partitioned structure whose Gaifman graph is edgeless,
and all part predicates contain at most one element.

  A \emph{construction sequence} of a structure $\str A$
  is a sequence of partitioned structures $\str A_1,\ldots,\str A_m$
  such that
  \begin{itemize}
    \item $\str A_1$ is an initial partitioned structure,
    \item $\str A_m$ has only one nonempty part predicate,  $\str A$ is a reduct of $\str A_m$,
    and the Gaifman graph of $\str A_m$ is the complete graph on $V(\str A)$,
    and
\item for each $t\in[m-1]$, we have that $\str A_{t+1}=F(\str A_t)$, for
some $F\in\setof {\merge PQ,\resolve RPQ}{P,Q\in\pi,R\in \sigma_2}$.
  \end{itemize}
  The \emph{radius-$r$ width} of the construction sequence is the maximum
  radius-$r$ width of all the structures $\str A_1,\ldots,\str A_m$.
  The \emph{radius-$r$ merge-width} of $\str A$ is the minimum
  radius-$r$ width of a construction sequence of $\str A$.
\end{definition}

For graphs, the definition above is compatible with the construction sequences
from \Cref{sec:intro} in the following sense.

\begin{lemma}\label{lem:construction-construction}
Let \(G=(V,E_G)\) be a graph, and 
let $A_G$ be the corresponding relational structure with domain $V$ and a binary relation $E=\setof{(u,v)\in V^2}{u\neq v, uv\in E_G}$.
Every construction sequence of \(G\) of
length \(\ell\) can be transformed into a construction sequence
\(\str A_1,\ldots,\str A_m\) of $A_G$ of length \(m\le |V|+2\ell+1\), and the same radius-$r$ width, for every \(r\in\N\cup\{\infty\}\).
\end{lemma}
\begin{proof}   
    The construction sequence for $A_G$ 
    uses the signature $\sigma=\set{E,N}$. Starting with the partition of $V$ into singletons, the sequence first resolves applies the operation $\resolve{N}{P}{P}$ for each singleton part $P$. After this sequence of steps we have $N=\setof{(v,v)}{v\in V}$.
    
    Next we proceed with the construction sequence by replicating the steps in the given graph construction sequence. A merge
step is simulated by the corresponding operation \(\merge PQ\). A positive
resolve of two parts \(P,Q\) is simulated by the two structure operations
\(\resolve EPQ\) and \(\resolve EQP\); a negative resolve is simulated in the
same way using \(N\) instead of \(E\). 

By induction over the sequence, the off-diagonal pairs contained in
\(E\cup N\) are exactly the currently resolved vertex pairs of the graph
construction sequence, with both orientations present. Therefore the Gaifman
graph of the current partitioned structure is exactly the graph \((V,R)\) of
currently resolved pairs. Since \(\sigma_G\) has no unary symbols besides the
part predicates, the radius-\(r\) width is preserved for every \(r\).
\end{proof}

\subsection{Main lemma}

The model checking algorithm processes a given construction sequence, step by step.
The following lemma handles a single step of the computation.

\begin{restatable}{lemma}{computestep}\label{lem:local-type-resolve-merge}
    There is an algorithm which takes as input
    a $\sigma$-structure $\str A$, \(\bigl(\ltp_{k,q}(\str A, v):v\in V(\str A)\bigr)\), and
    \begin{itemize}
        \item either symbols \(P,Q \in \sigma_1\) defining a structure \(\str B \eqdef \merge PQ(\str A)\),
        \item or symbols \(P,Q \in \sigma_1\) and \(R \in \sigma_2\) defining a structure \(\str B \eqdef \resolve RPQ(\str A)\),
    \end{itemize}
    and computes
    $\bigl(\ltp_{k,q}(\str B, v):v\in V(\str B)\bigr)$
    in time $O_{k,q,\sigma_2,w}(|V(\str A)|+|E(\str A)|)$,
    where \(w\) is the radius-\((2\rho^-(k-1,q+1)+2)\) width of \(B\).
\end{restatable}

\Cref{lem:local-type-resolve-merge} easily yields the following generalization of \Cref{thm:main} to binary structures.

\begin{restatable}{theorem}{mainalgorithm}\label{thm:main-algorithm}
There is an algorithm which,
given a binary $\sigma$-structure $\str A$ together
 with its construction sequence
 and a first-order $\sigma$-sentence $\phi$,
 determines whether $\str A\models \phi$
in time
 $O_{\sigma,q,w}(|V(\str A)|^2 \cdot m)$,
 where:
    \begin{itemize}
        \item $m$ is the length of the construction sequence,
        \item $q$ is the quantifier rank of $\phi$,
        \item $w$ is the radius-\((2\rho^-(0,q+1)+2)\) width of the construction sequence.
    \end{itemize}
\end{restatable}

\begin{proof}[Proof of \Cref{thm:main-algorithm}]
   Let $q$ be the quantifier rank of $\phi$.
For  $t=1,\ldots,m$ we compute
$$(\ltp_{1,q}(\str A_t,a):a\in V(\str A)),$$
by induction on $t$.
In the base case of $t=1$, the structure $\str A_1$ is an initial partitioned structure.
  Therefore, its Gaifman graph is edgeless and all distance atoms of the dist-FO logic are vacuous.
  In effect, $\ltp_{1,q}(\str A_1,a)$ is completely determined by the
  atomic type $\tau(x)$ of $a$ in $\str A_1$,
as well as the  quantifier-rank $q$ first-order type of $a$ in the structure $\str A'$ obtained from $\str A_1$ by forgetting the part predicates $Q\in\pi$, as well as all binary predicates. As the structure $\str A'$ involves only unary predicates,
evaluating first-order formulas of fixed quantifier rank $q$ can be done in time $O_{q,|\sigma|}(|V(\str A')|)$.
This completes the case $t=1$.

In the inductive step,
apply \Cref{lem:local-type-resolve-merge} to compute $\bigl(\ltp_{1,q}(\str A_t,a):a\in V(\str A)\bigr)$ from $\bigl(\ltp_{1,q}(\str A_{t-1},a):a\in V(\str A)\bigr)$ in time $O_{q,\sigma_2,w}(|V(\str A)|+|E(\str A_{t-1})|)\le O_{\sigma,q,w}(|V(\str A)|^2)$. As there are $m$ computation steps, the total running time is $O_{\sigma,q,w}(|V(\str A)|^2\cdot m)$.

Finally, to determine whether $A\models\phi$,
we view $\phi$ as a formula $\phi(x)$ with one free variable, and check whether $\phi$ is in $\ltp_{1,q}(\str A_m,a)$ for any chosen vertex $a\in V(\str A)$, which can be done in time $O_{q,\sigma}(1)$
 (if $V(A)$ is empty, we evaluate $\phi$ by brute force).
Note that since the Gaifman graph of $\str A_m$ is complete, $\phi$ can be viewed as a local \distFO formula of distance rank $(1,q)$, so the above check is well-defined.
\end{proof}

 \Cref{thm:main-algorithm} easily implies \Cref{thm:main}, as we now argue.
\begin{proof}[Proof of \Cref{thm:main}]
  Let \(q\) be the quantifier rank of \(\phi\), and let $r=(2\rho^-(0,q+1)+2)\le 2^{O(q^2)}$. 
  Let $w$ be the radius-\(r\) width of the input construction sequence of $G$, and 
let \(\ell\) be its length.
  Since we assume the sequence is effective and has radius-\(r\) width \(w\), it also
  has radius-\(1\) width at most \(w\), and \Cref{lem:effective-construction-seq}
  gives \(\ell\le (2w+1)|V(G)|\).

  By \Cref{lem:construction-construction}, the graph construction sequence
  induces a construction sequence of the corresponding binary \(\{E\}\)-structure
  \(\str A_G\) of length \(m\le |V(G)|+2\ell+1\le O_w(|V(G)|)\) and radius-\(r\) width $w$.
  Then
  \[
      G\models\phi \quad\iff\quad \str A_G\models\phi.
  \]
  By \Cref{thm:main-algorithm}, we can decide whether \(A_G\models\phi\) in time
  \[
      O_{q,w}(|V(G)|^2\cdot m)=O_{q,w}(|V(G)|^3).\qedhere
  \]
\end{proof}
The remainder of \Cref{sec:mc} is devoted to the proof of \Cref{lem:local-type-resolve-merge}.

\subsection{Transforming types}
\label{sec:trans-tp}
First,
we show that if $A$ is a structure
and $B$ is obtained from $A$ via either a merge or a resolve operation,
then given a type $\tp_{k,q}(A,\tup a)$ of a tuple $\tup a$ in $A$, we can determine
its type $\tp_{k,q}(B,\tup a)$ in~$B$.
To prove this, it is enough to observe that
every formula $\phi(\tup x)$ of \distFO can be transformed into a formula $\psi(\tup x)$
of the same distance rank, so that
\[B \models \phi(\bar v)\quad\iff\quad A \models \psi(\bar v)\qquad \text{for all }\bar v\in V(B)^{\bar x}.\]
This is based on formula rewriting, using the definitions of the merge and resolve operations.
The starting observation is that if $B$ is obtained from $A$ by such an operation, then the atomic predicates of $B$ correspond to quantifier-free formulas in $A$, respecting the equivalence above.
For plain first-order logic, this is enough to conclude that every first-order formula $\phi(\tup x)$ can be transformed into a formula $\psi(\tup x)$ of the same quantifier rank so that the above equivalence holds.
For \distFO, we also need to analyse how  distance atoms change from $B$ to $A$.
The case of a merge operation is easy, since it does not change distances;
this is handled by the next lemma.

\begin{lemma}\label{lem:merge-interp}
    Let $\str A$ be a $\sigma$-structure and \(\str B \eqdef \merge PQ(\str A)\)
    for some $P,Q\in\sigma_1$.
    Every \distFO formula \(\phi(\bar x)\) can be transformed into a \distFO formula \(\psi(\bar x)\) of the same distance rank
    such that
    \[B \models \phi(\bar v)\quad\iff\quad A \models \psi(\bar v)\qquad \text{for all }\bar v\in V(B)^{\bar x}.\]
    The formula $\psi$ can be computed in time $O_{k,q,\sigma}(1)$,
    given $P, Q$, and $\phi$. Moreover, if $\phi$ is a local \distFO formula, then so is $\psi$.
\end{lemma}
\begin{proof}
    Note that \(A\) and \(B\) have the same Gaifman graph.
    Observe that for all vertices \(u\),
    \[
        P^B(u) ~\iff~ P^A(u) \lor Q^A(u) \quad\quad\text{and}\quad\quad Q^B(u) ~\iff~ \text{false}.
    \]
    More generally, for all $r\in\N$,
    \begin{align*}
        \dist(u, P^B)< r ~&\iff~
        \bigl(\dist(u, P^A)< r\bigr) \lor
        \bigl(\dist(u, Q^A)< r\bigr),
    \\ \dist(u, Q^B)< r~&\iff~ \text{false}.
    \end{align*}

    Thus, we can transform \(\phi(\tup x)\) to \(\psi(\tup x)\), by replacing each \(P\)- or \(Q\)-atom and unary distance atom with the corresponding quantifier-free formula on the right-hand side of these equivalences.
    This obviously preserves  the truth of the formula, the distance rank, and locality.
\end{proof}
\begin{corollary}\label{cor:merge-interp}
     Let $\str A$ be a $\sigma$-structure and \(\str B \eqdef \merge PQ(\str A)\)
    for some $P,Q\in\sigma_1$.
    For every tuple $\tup a$ of at most $k$ elements of $A$,
    the type $\tp_{k,q}(B,\tup a)$
    only depends on $P,Q$, and the type
    $\tp_{k,q}(A,\tup a)$,
    and is computable from these data in time $O_{k,q,\sigma}(1)$.
Similarly, given $P,Q,$ and the local type $\ltp_{k,q}(A,\tup a)$,
the local type $\ltp_{k,q}(B,\tup a)$ can be computed in time $O_{k,q,\sigma}(1)$.
\end{corollary}
\begin{proof}
    Let \(\tup a\) be a tuple of elements of $A$.
    To decide whether a formula $\phi(\tup x)$ is in $\tp_{k,q}(B,\tup a)$,
    it suffices to  transform $\phi(\tup x)$ into a formula $\psi(\tup x)$ as in \Cref{lem:merge-interp},
    and to then check whether $\psi(\tup x)$ is in $\tp_{k,q}(A,\tup a)$.
    This crucially uses the fact that the formula transformation does not increase the distance rank of the considered formulas. Local types are handled the same way, as the transformation preserves locality.
\end{proof}

The next lemma handles the case of
resolves, and those change distances. Crucially, as the Gaifman graph of $B=\resolve RPQ(\str A)$ contains the Gaifman graph of $A$, distances in $B$
cannot become larger than in $A$.
We need to analyse how
this affects distance atoms, as well as
local quantification.
The key observation is that only distances
between pairs of vertices which were close to either $P$ or $Q$ in $A$ are affected.
Binary distance atoms in $B$ are handled thanks to the unary distance atoms of \distFO,
which allow to measure distances to $P$ or to $Q$ in $A$. This is the reason for why unary distance atoms are included in \distFO.
Similarly, we can handle local quantification in $B$. However, following this transformation
requires introducing unrestricted quantification.
Thus, unlike \Cref{lem:merge-interp}, the transformation in \Cref{lem:resolve-interp} does not preserve locality of formulas
(local formulas will be treated separately in \Cref{lem:local-type-resolve-merge}).

\begin{lemma}\label{lem:resolve-interp}
    Let $\str A$ be a $\sigma$-structure and \(\str B \eqdef \resolve RPQ(\str A)\)
    for some $P,Q\in\sigma_1$ and $R\in\sigma_2$.
    Every \distFO formula \(\phi(\bar x)\) can be transformed into a \distFO formula \(\psi(\bar x)\) of the same distance rank
    so that
    \[B \models \phi(\bar v)\quad\iff\quad A \models \psi(\bar v)\qquad \text{for all }\bar v\in V(B)^{\bar x}.\]
    The formula $\psi$ can be computed in time $O_{k,q,\sigma}(1)$,
    given $\phi, P, Q,R$, and the distances  $\dist^A(P^A,Y)$ and $\dist^A(Q^A,Y)$ for $Y\in \sigma_1$.
\end{lemma}



\begin{proof}
We proceed by induction on the structure of $\phi(\bar x)$.
    We analyze all cases of the structural induction. We assume both $P^A$ and $Q^A$ are non-empty, as otherwise there is nothing to prove.

    %

    \paragraph{Atoms of first-order logic.}
    The relation \(R\) is the only atom from the signature $\sigma$ that changes its interpretation between \(A\) and \(B\).
    Observe for all \(u,v\) that
    \[
        R^\str B(u,v) ~\iff~ R^\str A(u,v) \lor \Bigl(P^\str A(u) \land Q^\str A(v) \land \bigwedge_{S\in\sigma_2} \neg S^\str A(u,v)\Bigr).
    \]
    This transformation does not change the distance rank, and proves the statement for first-order atoms.

    \paragraph{Unary distance atoms up to radius \(\rho^-(k,q)\).}
    For all vertices \(u\) and unary relation symbols \(Y\in \sigma_1\),
    any shortest path from $u$ to $Y^A$ in $B$ uses either $0$, $1$, or $2$ edges with one endpoint in $P^A$ and the other one in $Q^A$. Hence we have:

    \begin{multline*}
      \dist^{B}(u,Y) < r ~\iff~\Bigl(\dist^{A}(u,Y) < r\Bigr)\lor {}\\ \Bigl(\dist^{A}(u,P^{A}) < \bigl(r - 1 - \dist^{A}(Q^{A},Y)\bigr) \Bigr)
                            \lor  \Bigl(\dist^{A}(u,Q^{A}) < \bigl(r - 1 - \dist^{A}(P^{A},Y)\bigr) \Bigr)\lor {}\\
                            \Bigl(\dist^{A}(u,P^{A}) < \bigl(r - 2 - \dist^{A}(P^{A},Y)\bigr) \Bigr)
                            \lor  \Bigl(\dist^{A}(u,Q^{A}) < \bigl(r - 2 - \dist^{A}(Q^{A},Y)\bigr) \Bigr).
    \end{multline*}
    The right-hand sides of the inequalities are numerical constants which depend on the values \(\dist^{A}(Q^{A},Y)\) and \(\dist^{A}(P^{A},Y)\). In the algorithmic part of the statement, we assume those values are given.
    Both sides of the equivalence have the same distance rank.

    \paragraph{Binary distance atoms up to radius \(\rho^-(k,q)\).}

    Every minimal-length path between two vertices \(u\) and \(v\) in \(B\)
    uses $0$, $1$, or $2$ edges from \(B\) that are not in \(A\), that is, edges from \(P^A\) to \(Q^A\) or from \(Q^A\) to \(P^A\).
    Hence, every such path between \(u\) and \(v\) in \(B\)
    is either:
    \begin{itemize}
       \item  a path between \(u\) and \(v\) in \(A\),
       \item a path between \(u\) and \(P^A\) in \(A\) followed by an edge from \(P^A\) to \(Q^A\) (which exists in \(B\)) followed by a path between \(Q^A\) and \(v\) in \(A\),
       \item a path between \(u\) and $P^A$ in \(A\) followed by path of length two  from \(P^A\) to \(P^A\) passing through \(Q^A\), followed by a path between \(P^A\) and \(v\) in \(A\), or
       \item one of the above with the roles of \(P^A\) and \(Q^A\) reversed.
    \end{itemize}
For each $r\in \N$, define the following formula (here $\dist(a,X)\le s$ is shorthand for the unary distance predicate $\dist(a,X)<(s+1)$):
\begin{multline*}
\delta^r_{P,Q}(x,y)\ \equiv\\
\bigvee_{\substack{r_1,r_2:\\r_1+1+r_2=r}} \Bigl(\bigl(\dist(x,P) \le r_1\bigr) \land \bigl(\dist(y,Q) \le r_2\bigr)\Bigr)\ \lor\
        \Bigl(\bigl(\dist(x,Q) \le r_1\bigr) \land \bigl(\dist(y,P) \le r_2\bigr)\Bigr)\quad\lor\\
        \bigvee_{\substack{r_1,r_2:\\r_1+2+r_2=r}} \Bigl(\bigl(\dist(x,P) \le r_1\bigr) \land \bigl(\dist(y,P) \le r_2\bigr)\Bigr)\ \lor\
        \Bigl(\bigl(\dist(x,Q) \le r_1\bigr) \land \bigl(\dist(y,Q) \le r_2\bigr)\Bigr).
\end{multline*}
Then, for all vertices \(u,v\) and distances \(r \in \N\),
    \begin{align}\label{eq:dist}
        &B\models \dist(u,v) \le r ~\iff~ A\models \bigl(\dist(u,v) \le r\bigr) \ \lor\   \delta^r_{P,Q}(u,v).
    \end{align}
    The right hand side only uses (unary and binary) distance atoms with radius at most \(r\), and thus has the same distance rank as the left side.

    \paragraph{Existential quantification.}
    If the statement holds for \(\phi(\bar x y)\), then it clearly also holds for \(\exists y~ \phi(\bar x y)\).
    Consider now a formula
   \[\exists y~\bigl(\dist(\bar x,y) \le \rho^+(k+1,q-1)\bigr) \land \phi(\bar x y)\] of distance rank \((k,q)\).
    As binary distance atoms may only use radii up to \(\rho^-(k+1,q-1)\), we
    cannot directly evoke the equivalence \eqref{eq:dist} for this binary distance
    atom without increasing the distance rank.
    However, by induction, our statement transforms the subformula \(\phi(\bar x y)\) of distance rank \((k+1,q-1)\) into a \distFO formula \(\psi(\bar x y)\) of the same distance rank.

    By commuting existential quantification and disjunction, the equivalence \eqref{eq:dist}  implies for any \(\bar v=(v_1,\ldots,v_\ell)\in A^{|\tup x|}\) and $r\eqdef\rho^+(k+1,q-1)$:
    \begin{align*}
        B &\models \exists y~\bigl(\dist(y,\bar v) \le r \bigr) \land \phi(y\bar v) ~\iff~ \\
        A &\models \exists y~ \bigl(\dist(y,\bar v) \le r\bigr)  \land \psi(y\bar v)\ \ \  \lor\ \ \exists y~  {\bigvee_{i\in [\ell]}}   \delta^r_{P,Q}(v_i,y)
        \land \psi(y\bar v).
    \end{align*}
    The first disjunct involves
    local existential quantification and is a \distFO formula of distance rank $(k,q)$.
    The second disjunct involves unrestricted quantification. Note that the formula $\delta^r_{P,Q}(v_i,y)$ is a boolean combination of unary distance atoms
    with radii up to \(r=\rho^+(k+1,q-1)\le \rho^-(k,q)\), and is therefore also a \distFO formula of distance rank $(k,q)$.
Therefore, the right side has distance rank \((k,q)\).
    This completes the inductive proof.
\end{proof}

We get the following corollary,
analogous  to \Cref{cor:merge-interp}.
\begin{corollary}\label{cor:resolve-interp}
     Let $\str A$ be a $\sigma$-structure and \(\str B \eqdef \resolve RPQ(\str A)\)
    for some $P,Q\in\sigma_1$ and $R\in\sigma_2$.
    For every tuple $\tup a$ of at most $k$ elements of $A$,
    the type $\tp_{k,q}(B,\tup a)$
    only depends on $P,Q,R$, the type
    $\tp_{k,q}(A,\tup a)$,
    and the distances  $\dist^A(P^A,Y)$ and $\dist^A(Q^A,Y)$ for $Y\in \sigma_1$.
Moreover, given this data,
$\tp_{k,q}(B,\tup a)$ can be computed in time $O_{k,q,\sigma}(1)$.
\end{corollary}

\subsection{Transforming local types}
The transformations from
\Cref{sec:trans-tp} imply that
if the structure $B$ is obtained from a fixed structure $A$
by a merge or resolve operation,
then the type of each vertex in $B$ only
depends on its type (of the same distance rank) in $A$.
However, this transformation
is not directly useful algorithmically in the context of merge-width, since the type
of a vertex stores information about
all predicates in $\sigma_1$,
and the number of such predicates may be unbounded. In fact, the hidden constant in the running time $O_{k,q,\sigma}(1)$ of the algorithms from \Cref{sec:trans-tp} is non-elementary in $|\sigma_1|$.
In the context of merge-width, local types are significantly more manageable,
since we assume that locally around any given vertex, the number of (non-empty) unary predicates is bounded and therefore, by \Cref{lem:ltp-local-subgraph},
the local type has bounded size.

To compute the local type of a vertex $a$ in the structure $B$,
given its local type in the structure $A$,
we apply
the transformations from \Cref{sec:trans-tp}
to a structure $A^*$ which contains a sufficiently large neighborhood of $a$,
and use \Cref{lem:ltp-local-subgraph}.
However, as we are now given the local type of $a$ in $A^*$ on input, we first compute its (global) type in $A^*$, using the locality theorem (specifically, \Cref{thm:localglobal'}). For that, we first need to compute the scatter type of $A^*$.

\begin{lemma}\label{obs:stpcompute}
    Given $k,q\in\N$, a binary signature $\sigma$,
    and a $\sigma$-structure \(A^*\)
    and, if $q\ge 1$, also \(\ltp_{k+1,q-1}(A^*,a)\) for each \(a \in V(A^*)\),
    one can compute \(\stp_{k,q}(A^*)\) in time \(O_{k,q,\sigma} (|V(A^*)|+|E(A^*)|)\).
\end{lemma}
\begin{proof}
For each relevant radius and formula, we execute the greedy scattered set process discussed in \Cref{sec:typedefs} for up to \(k+q\) steps.
For each vertex added by the greedy process,
the factor \(|V(A^*)|+|E(A^*)|\) in the run time accounts for the search that is necessary to exclude the vertices that are close to~it.
\end{proof}

\begin{lemma}\label{lem:localglobaltypes-uniform}
    There is an algorithm which takes as input
    \(k,q \in \N\),
    a $\sigma$-structure \(A^*\),
    and \(\ltp_{k,q}(A^*,a)\) for each \(a\in V(A^*)\),
    and computes
    \(\tp_{k,q}(A^*,a)\) for each $a \in V(A^*)$, jointly
    in time \(O_{k,q,\sigma}(|V(A^*)|+|E(A^*)|)\).
\end{lemma}
\begin{proof}
    By \Cref{obs:stpcompute} and \Cref{obs:preservedistrank}
    we can efficiently compute \(\stp_{k,q}(A^*)\).
    By \Cref{thm:localglobal'},
    \(\tp_{k,q}(A^*,a)\) can be computed from
    \(\ltp_{k,q}(A^*,a)\) and \(\stp_{k,q}(A^*)\)
    in time $O_{k,q,\sigma}(1)$.
\end{proof}

\subsection{Proof of \Cref{lem:local-type-resolve-merge}}
Finally, we prove \Cref{lem:local-type-resolve-merge}, repeated below, which allows us to compute the local type of a vertex in $B$ from its local type in $A$ when $B$ is obtained from $A$ by a merge or resolve operation.
\computestep*

\begin{proof}
    The case of a merge operation follows  from \Cref{cor:merge-interp} and \Cref{lem:ltp-local-subgraph}. We therefore consider the case of a resolve operation. We can assume that $P^A$ and $Q^A$ are non-empty, as otherwise the resolve operation does not change the structure and the statement is trivial.

%
%
%
%
    Let \(r\eqdef\rho^-(k-1,q+1)\) and \(S \eqdef N_{2r}^A(P^A \cup Q^A)\).
    Let \(A^*\) and \(B^*\) be the reducts of \(A\) and \(B\) that remove all unary predicates that contain no element from \(S\).
    In the Gaifman graph of \(B\), as \(P\) and \(Q\) are fully connected (that is, form a biclique or clique), the set \(S\) has radius at most \(2r+2\).
    As \(w\) bounds the radius-\((2r+2)\) width of \(B\), we know that \(B^*\) has at most \(w\) unary predicates.
    Hence, so has \(A^*\).

    As \(A^*\) is a reduct of \(A\), it is trivial to compute \(\ltp_{k,q}(A^*,
    v)\) from \(\ltp_{k,q}(A,v)\) by simply removing all formulas that use unary
    predicates not mentioned in \(A^*\).
    Given that, by \Cref{lem:localglobaltypes-uniform}, we can compute 
    the global types \[\bigl(\tp_{k,q}(A^*,v):v\in V(\str A)\bigr)\]
    in time
    $O_{k,q,\sigma_2,w}(|V(\str A)|+|E(A)|)$.
    Since \(B^*=\resolve RPQ(\str A^*)\), by \Cref{cor:resolve-interp}, we can then compute \[(\tp_{k,q}(B^*,v) : v \in V(B)).\] This
    takes time $O_{k,q,\sigma_2,w}(|V(\str A)|+|E(A)|)$,
    as the distances
 $\dist^A(P^A,Y)$ and
    $\dist^A(Q^A,Y)$ for each
    unary predicate $Y$ of $A^*$,
    required by \Cref{cor:resolve-interp},
     can be computed by a breadth-first search in time \(O(|V(A)|+|E(A)|)\),
     and there are at most $w$ such predicates $Y$.
     Given \((\tp_{k,q}(B^*,v) : v \in V(B))\),  in particular we can obtain \((\ltp_{k,q}(B^*,v) : v \in V(B))\),
     in time $O_{k,q}(|V(A)|)$.

    Thus, to prove the lemma, it remains to show that for all \(v \in V(A)\),
    \[
        \ltp_{k,q}(\str B,v)=\begin{cases}
            \ltp_{k,q}(A,v)&\text{if }\dist^{A}(v,P^A \cup Q^A) > r,\\
            \ltp_{k,q}(B^*,v)&\text{otherwise}.
        \end{cases}
    \]
    To show this, first consider \(v \in V(A)\) with \(\dist^A(v,P^A \cup Q^A) > r\).
    Then \(B[N_r(v)]=A[N_r(v)]\), and thus trivially, \[\ltp_{k,q}(B[N_r(v)],v)=\ltp_{k,q}(A[N_r(v)],v).\]
    Hence, by \Cref{lem:ltp-local-subgraph}, also \[\ltp_{k,q}(B,v) = \ltp_{k,q}(A,v).\]
    Next consider \(v \in V(A)\) with \(\dist^A(v,P^A \cup Q^A) \le r\).
    Then \(N_r^B(v) \subseteq S\), and thus by twice applying \Cref{lem:ltp-local-subgraph},
    \[\ltp_{k,q}(B,v) = \ltp_{k,q}(B[N_{r}(v)],v) = \ltp_{k,q}(B^*,v).\qedhere\]
\end{proof}

\section{Closure under interpretations}\label{sec:closure}
We prove \Cref{thm:interp}, repeated below.
\introinterp*

In fact, we prove a stronger result, stated below, in which the input class $\CC$ is a class of structures over a binary signature $\sigma$.
For a \(\sigma\)-structure \(\str A\) and a first-order \(\sigma\)-formula
\(\phi(x,y)\), define \(\phi(\str A)\) as the graph with vertices
\(V(\str A)\) and edges \(uv\) such that
\(\str A\models \phi(u,v)\lor\phi(v,u)\). Our goal is to bound the merge-width of $\phi(\str A)$ in terms of the merge-width of $\str A$. To this end, it will be convenient to exhibit a merge sequence (see \Cref{sec:merge-sequence}), rather than a construction sequence.

\begin{lemma}\label{lem:interp}
  Fix a finite binary signature \(\sigma\), a first-order \(\sigma\)-formula
  \(\phi(x,y)\) of quantifier rank \(q\), and \(r\in\N\).
  Set
  \[
      h\eqdef \rho^-(2,q)
      \quad\text{and}\quad
      s\eqdef \rho^-(1,q+1).
  \]
  If a \(\sigma\)-structure \(\str A\) has a construction sequence of radius \(rh+s\)
  width at most \(w\), then the graph \(\phi(\str A)\) has a merge sequence of
  radius-\(r\) width bounded in terms of \(r,q,w\), and \(\sigma\).
\end{lemma}

\begin{proof}
  Let \(\str A_1,\ldots,\str A_m\) be a construction sequence for \(\str A\)
  of radius-\((rh+s)\) width at most \(w\).
  We may assume that \(\str A\) is a reduct of \(\str A_m\).
  Denote \(V\coloneqq V(\str A)\).

  For \(t\in[m]\), let \(\cal P_t\) be the partition of \(V\) in which two vertices
  \(a,b\) lie in the same part if and only if
  \[
      \ltp_{2,q}(\str A_t,a)=\ltp_{2,q}(\str A_t,b).
  \]
  Define
  \[
      R_t\coloneqq
      \setof{ab\in \binom V2}{\dist^{\str A_t}(a,b)\le h}.
  \]
Then $R_1\subseteq R_2\dots\subseteq R_m$, as $\dist^{A_{i+1}}(a,b)\le \dist^{A_{i}}(a,b)$ for $i\in[m-1]$ and $a,b\in V(\str A)$.

  We prove that
  \begin{align}\label{eq:new-merege-seq}
      (\cal P_1,R_1),\ldots,(\cal P_m,R_m),(\{V\},\binom V2)
  \end{align}
  is a merge sequence of \(\phi(\str A)\) of radius-\(r\) width bounded in terms of
  \(r,q,w,\sigma\).

  First, \(\cal P_1\) is the partition into singletons because in the initial
  partitioned structure each vertex is the unique element of its part predicate.
  Moreover, \(\cal P_t\) coarsens \(\cal P_{t-1}\).  Indeed, the proof of
  \Cref{lem:local-type-resolve-merge}, and the merge case in \Cref{cor:merge-interp},
  show that \(\ltp_{2,q}(\str A_t,a)\) is determined by
  \(\ltp_{2,q}(\str A_{t-1},a)\) and the operation producing \(\str A_t\).

  We next bound the radius-\(r\) width. Fix \(t>1\) and \(v\in V\).
  If \(u\) is at distance at most \(r\) from \(v\) in \((V,R_t)\), then
  \(\dist^{\str A_t}(u,v)\le rh\).
  By \Cref{lem:ltp-local-subgraph}, every local formula contributing to
  \(\ltp_{2,q}(\str A_{t-1},u)\) is evaluated inside the radius-\(s\)
  neighborhood of \(u\) in \(\str A_{t-1}\).  Since the Gaifman graph of
  \(\str A_{t-1}\) is contained in that of \(\str A_t\), all unary predicates
  relevant to this local type are contained in
  \(\reach^{\str A_t}_{rh+s}(v)\), apart from the two part predicates affected
  when the step \(\str A_{t-1}\to\str A_t\) is a merge.  Thus only \(w+2\)
  unary predicates can occur in these local neighborhoods.
  As the number of normalized local \((2,q)\)-types over a signature with at most
  \(w+2\) unary predicates and fixed binary part \(\sigma_2\) is bounded in terms
  of \(q,w,\sigma\), only boundedly many parts of \(\cal P_{t-1}\) are reachable
  from \(v\) in \((V,R_t)\).  The final pair \((\{V\},\binom V2)\) has bounded
  width as well, since the number of parts of \(\cal P_m\) is bounded by the same
  local-type bound.

  It remains to verify the default condition for unresolved pairs.  Fix \(t\in[m]\)
  and parts \(X,Y\in\cal P_t\).  Let \(a,a'\in X\) and \(b,b'\in Y\) be such that
  \(ab,a'b'\notin R_t\).  Then both pairs are at distance larger than \(h\) in
  \(\str A_t\).  By \Cref{lem:far-ltp'} the local type
  \(\ltp_{2,q}(\str A_t,ab)\) is determined by the one-vertex local types
  \(\ltp_{2,q}(\str A_t,a)\) and \(\ltp_{2,q}(\str A_t,b)\); hence
  \[
      \ltp_{2,q}(\str A_t,ab)=\ltp_{2,q}(\str A_t,a'b').
  \]
  By \Cref{thm:localglobal'}, we have that
  \[
      \tp_{2,q}(\str A_t,ab)=\tp_{2,q}(\str A_t,a'b').
  \]
  Repeatedly applying \Cref{cor:merge-interp,cor:resolve-interp} along the fixed
  suffix \(\str A_t,\ldots,\str A_m\) shows that
  \[
      \tp_{2,q}(\str A_m,ab)=\tp_{2,q}(\str A_m,a'b').
  \]
  Since \(\phi(x,y)\lor\phi(y,x)\) is a first-order formula of quantifier rank
  at most \(q\) over the reduct \(\str A\), it follows that either all pairs in
  \(XY-R_t\) are edges of \(\phi(\str A)\), or none of them are.

  Thus \eqref{eq:new-merege-seq} is a merge sequence of \(\phi(\str A)\) with the
  required radius-\(r\) width bound.
\end{proof}

\begin{proof}[Proof of \Cref{thm:interp}]
    Represent each graph \(G\in\CC\) as the binary
  structure \(\str A_G\) with a binary relation representing adjacency in $G$.  
  Note that $\phi(G)=\phi(A_G)$.
  By \Cref{lem:construction-construction},
  the class of binary structures $\setof{A_G}{G\in\CC}$ has bounded merge-width.  
  By 
 \Cref{lem:interp},
 the class $\setof{\phi(A_G)}{G\in\CC}$ has bounded merge-width.
\end{proof}

\section{Case studies}\label{sec:cases}
We now review the relationship between
merge-width and previously studied graph parameters.
First, it is not difficult to check that 
$\mw_\infty(G)$ is functionally equivalent to the clique-width of $G$. 
\begin{theorem}\label{thm:cw}
  A graph class $\CC$ has bounded clique-width if and only if $\mw_\infty(\CC)<\infty$.
\end{theorem}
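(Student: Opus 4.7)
The plan is to prove both directions by explicit constructions. For convenience I would work with NLC-$k$-expressions, a normal form functionally equivalent to clique-width $k$-expressions in which edges are added only at union steps, uniformly by label pairs.

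\textbf{From bounded clique-width to bounded $\mw_\infty$.} Given an NLC-$k$-expression for $G$, viewed as a binary expression tree $T$, I would construct a merge sequence by processing $T$ in DFS order. At any intermediate stage, after the DFS has fully processed a family of disjoint maximal subtrees producing subgraphs $G_{v_1}, \ldots, G_{v_s}$, each carrying at most $k$ label classes, the partition $\cal P$ is defined to consist of all these label classes together with singletons for untouched vertices, and the resolved set is $R = \bigcup_i V(G_{v_i})^2$. Monotonicity of $R$ and coarsening of $\cal P$ follow from the DFS structure. Homogeneity holds because cross-subtree pairs are unresolved and the NLC semantics ensures that two vertices in the same label class behave identically toward every external vertex: all future edges will be added by label-pair rules. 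The radius-$\infty$ width is $O(k)$: a vertex $v \in V(G_{v_i})$ has connected component $V(G_{v_i})$ in the resolved graph (which is a clique there), and thus reaches only the at most $k$ label classes of $G_{v_i}$; untouched vertices are isolated.

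\textbf{From bounded $\mw_\infty$ to bounded clique-width.} Given a construction sequence of $G$ of radius-$\infty$ width $k$, I would extract a clique-width expression with at most $f(k)$ labels. The crucial observation is that at every stage each connected component $C$ of the resolved graph intersects at most $k$ parts, hence carries a natural labeling by at most $k$ labels; moreover, since all cross-component pairs are unresolved, homogeneity forces the $G$-adjacency between any two distinct components to depend only on the pair of part-labels. Reading the sequence forward and maintaining, for each component, a partial clique-width sub-expression on at most $k$ local labels, each within-component merge of parts translates to a relabeling, each within-component resolve to an edge-addition, and each resolve that coalesces two components to a clique-width union followed by an edge-addition dictated by labels.

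\textbf{The hard part} will be the second direction, specifically the handling of merges of parts that live in different connected components of the resolved graph. Such a merge identifies labels across sub-expressions that are not yet unioned in the clique-width sense. The radius-$\infty$ width bound forces that after any coalescing resolve the union of the two components' label sets still has size at most $k$, so the two components must share many parts; this should allow cross-component label identifications to be realized by relabelings performed within each component, using only a bounded function of $k$ global labels. Making this label alignment fully precise, and bounding the resulting clique-width by an explicit function of $k$, will be the main technical task.
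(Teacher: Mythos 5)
Your route is genuinely different from the paper's: the paper does not translate between construction sequences and clique-width expressions directly, but derives the forward implication from a known fact about contraction sequences of graphs of bounded clique-width (every red component stays bounded, fed into the argument of \Cref{lem:tww}), and the converse from \Cref{thm:fw-cases} combined with the equivalence of bounded radius-$\infty$ flip-width and bounded clique-width. Your first direction (NLC-$k$ expression $\Rightarrow$ bounded $\mw_\infty$) is essentially correct: the DFS-ordered partitions coarsen, the resolved sets increase, homogeneity of cross-subtree pairs follows because same-labelled vertices of a processed subexpression are indistinguishable to every external vertex, and the width is at most $2k$ rather than $k$ (the width of a merge sequence is measured on $(\cal P_{t-1},R_t)$, and the clique $V(G_v)$ created at a union node meets the label classes of both children); you also need a final merge down to one part. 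These are cosmetic.

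The second direction has a genuine gap, and it is not where you locate it. The cross-component part-merges you flag as the hard part are in fact harmless: each component keeps its own local labels, nothing needs aligning until two components are unioned, and at that moment $2k$ labels (part, side) followed by a relabelling suffice. The real problem is translating a resolve of parts $P,Q$ into an edge-addition $\eta_{i,j}$ keyed on part-labels: $\eta$ adds edges between \emph{all} of $P\cap C$ and $Q\cap C$, whereas a positive resolve creates edges only on the \emph{unresolved} pairs of $PQ$, and pairs of $PQ$ already resolved negatively must remain non-edges. Concretely: resolve $\set{p_1},\set{q_1}$ negatively, merge $p_1,p_2$ into $P$ and $q_1,q_2$ into $Q$ (legal when $p_1q_2,p_2q_1,p_2q_2\in E(G)$ and $p_1q_1\notin E(G)$), then resolve $P,Q$ positively; at that moment $p_1,p_2$ carry the same label, so your edge-addition creates the spurious edge $p_1q_1$. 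This affects both your "within-component resolve" rule and the edge-addition after a coalescing union. The repair is to change \emph{when} edges are added: maintain the invariant that the sub-expression for a component $C$ already realises all of $G[C]$, not merely its resolved part. Then a resolve that does not coalesce components is a no-op on the expression, and a resolve that coalesces components triggers unions during which one adds \emph{all} $G$-edges between distinct old components; this is legitimate precisely because every such pair is still unresolved at that moment, hence homogeneous by part pair. With that change your construction goes through with $O(k)$ labels.
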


    It is not difficult and quite instructive to prove the result directly,
    by translating between construction sequences and clique-width expressions, and vice-versa.
    Such a proof would provide explicit (linear) bounds in each direction.
    To be more succinct, we derive the statement without explicit bounds, by deriving it from results proved later in \Cref{sec:cases}.

\begin{proof}[Proof sketch]

    For the left-to-right implication,
    let $G$ be a graph of clique-width $k$.
    By \cite[Thm. 1]{tww6}, $G$ has a contraction sequence $\cal P_1,\ldots,\cal P_n$, such that at every 
    step $i\in\set{1,\ldots,n}$ of the sequence,
    every connected component of the red graph of $\cal P_i$ contains 
    at most $k'$ parts, for some constant $k'\in\N$ depending on $k$.
    By transforming the contraction sequence into a merge sequence as in the proof of \Cref{lem:tww} below, by \eqref{eq:tww-mw} we derive that the resulting construction sequence has radius-$\infty$ width at most $k'$. Thus, $\mw_\infty(G)\le k'$.
    This proves the left-to-right implication.

    Conversely, suppose that $\mw_\infty(\CC)<\infty$.
    It follows from \Cref{thm:fw-cases} below that $\fw_\infty(\CC)<\infty$.
    Therefore, $\CC$ has bounded clique-width, by \cite[Thm. II.6]{flip-width}.
\end{proof}





\noindent Thus, in a sense, the finite-radius merge-width parameters are local variants of clique-width.



\subsection{Merge-width and twin-width}\label{sec:tww}
We first discuss the relationship of merge-width and twin-width,
whose definition we recall now.
Recall that a \emph{contraction sequence} of a graph $G$ is a sequence of merge operations, which starts with the partition of $V(G)$ into singletons, and ends with the partition with one part.
Two sets $A,B\subset V(G)$ of vertices are \emph{homogeneous} if $AB\subset E(G)$ or $AB\cap E(G)=\emptyset$.
The \emph{red graph} of a partition $\cal P$
is the graph with vertices $\cal P$ and edges connecting pairs $\set{A,B}\in{\cal P\choose 2}$ 
which are not homogeneous. The twin-width of a graph $G$ is the minimum number $d$ 
such that $G$ admits a contraction sequence such that at every step, the red graph of the current partition has maximum degree at most $d$.

\twwintro*
\Cref{thm:tww} follows immediately from the next lemma.
\begin{lemma}\label{lem:tww}
  Fox every $r,d\in \N$ and graph $G$ of twin-width $d$, we have 
  $$\mw_r(G)\le 2+d+\ldots+d^r.$$
\end{lemma}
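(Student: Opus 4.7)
The plan is to follow the sketch in \Cref{ex:tww}: starting from a contraction sequence witnessing twin-width at most $d$, I would define a corresponding merge sequence (in the sense of \Cref{def:ms}) and bound its radius-$r$ width by $2+d+\cdots+d^r$. By \Cref{lem:fmw}, this will yield a construction sequence of the same radius-$r$ width, proving the lemma.

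Fix a contraction sequence $\cal P_1, \ldots, \cal P_n$ of $G$ such that for each $t$, the red graph $\tilde R_t$ on the parts of $\cal P_t$ (with edges between inhomogeneous pairs of parts, and self-loops at internally inhomogeneous parts) has maximum degree at most $d$. For each $t$, I would set
\[
R_t \coloneqq \bigl\{\{u,v\} \in \mbox{${V(G)\choose 2}$} : \text{the parts of } \cal P_t \text{ containing } u \text{ and } v \text{ are inhomogeneous in } G\bigr\}.
\]
Monotonicity $R_t \subseteq R_{t+1}$ would then follow from a short case analysis on the merge performed at step $t+1$: the parts of $\cal P_{t+1}$ containing any pair $u, v$ are equal to or supersets of the parts containing them in $\cal P_t$, and inhomogeneity of a pair of parts is preserved under taking supersets. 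Condition (3) of \Cref{def:ms} is immediate: any inhomogeneous pair $X, Y \in \cal P_t$ contributes all of $XY$ to $R_t$ by construction, while homogeneous pairs trivially satisfy the condition.

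Next I would bound the radius-$r$ width of $(\cal P_{t-1}, R_t)$. The key observation is that every edge of the graph $(V(G), R_t)$ corresponds to an edge or self-loop of $\tilde R_t$: by definition, a pair $\{u, v\} \in R_t$ lies between two inhomogeneous parts of $\cal P_t$. Consequently, a path of length $\le r$ in $(V(G), R_t)$ starting at a vertex $v$ projects to a walk of length $\le r$ in $\tilde R_t$ starting at the part $P_t(v)$ containing $v$, and therefore visits at most $1+d+d^2+\cdots+d^r$ distinct parts of $\cal P_t$. Since $\cal P_{t-1}$ is obtained from $\cal P_t$ by splitting the single part merged at step $t$ back into two, the number of parts of $\cal P_{t-1}$ reached grows by at most one, giving the desired bound $2+d+d^2+\cdots+d^r$.

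The main point requiring care is the monotonicity of $R_t$ when a merge joins two inhomogeneous parts into a single internally inhomogeneous part; this is handled uniformly by letting $\tilde R_t$ carry a self-loop at each such part, which does not affect the above reachability count.
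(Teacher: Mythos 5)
Your proof is correct and takes essentially the same route as the paper's: the same choice of resolved pairs $R_t$ (up to the cosmetic difference that the paper also resolves all pairs inside internally \emph{homogeneous} parts, while you handle intra-part pairs via self-loops in the red graph), the same projection of paths in $(V,R_t)$ onto walks in the red graph to get the $1+d+\cdots+d^r$ bound, and the same $+1$ accounting for passing from $\cal P_t$ to its refinement $\cal P_{t-1}$.
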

\begin{proof}
Fix a contraction sequence $\cal P_1,\ldots,\cal P_n$ of $G$, such that
the red graph of each partition $\cal P_t$ has maximum degree at most $d$.
For a vertex \(v\), denote by \(\cal P_t(v)\) the part of \(\cal P_t\) containing \(v\).
For $t\in[n]$, let $R_t$ consist of those pairs $ab\in {V\choose 2}$ such that $\cal P_t(a),\cal P_t(b)$ are either equal, or not homogeneous in $G$.
Then 
  $(\cal P_1,R_1),\ldots,(\cal P_n,R_n),$
  is a merge sequence of $G$. We bound its radius-$r$ width, for fixed $r\in\N$.

  For every $t\in[n]$, part $A\in\cal P_t$ and vertex $a\in A$, 
  we have that 
  \begin{align}\label{eq:tww-mw}
  B^r_{R_t}(a)\subset \bigcup B^r_{\cal P_t}(A),  
  \end{align}
  where $B^r_{R_t}(a)$ is the ball of radius $r$ around $a$ in the graph $(V,R_t)$, 
  and $B^r_{\cal P_t}(A)$ is the ball of radius $r$ around $A$ in the red graph of $\cal P_t$.
  In particular, $B^r_{R_t}(a)$ intersects at most $|B^r_{\cal P_t}(A)|$ parts of $\cal P_t$.
  As degree in the red graph is at most \(d\), the radius-$r$ width of $(\cal P_t,R_t)$ is at most $|B^r_{\cal P_t}(A)|\le 1+d+\ldots+d^r$.
  Since $\cal P_{t-1}$ is a refinement of $\cal P_t$ with exactly one more part,
  it follows that 
  the radius-$r$ width of $(\cal P_{t-1},R_t)$ is at most $2+d+\ldots+d^r$.
Consequently,
$\mw_r(G)\le 2+d+\ldots+d^r.$
\end{proof}


\subsection{Merge-width and Sparsity}\label{sec:sparsity}
The converse to \Cref{thm:tww} is false. 
For instance,  the class 
of graphs of maximum degree bounded by $d$ (a class which has unbounded twin-width already for $d=3$) 
has bounded merge-width.
Indeed, there is a trivial merge sequence for such a graph $G$: 
 $$\bigl(\textit{partition into singletons},\emptyset\bigr),\quad \bigl(\textit{partition with one part},E(G)\bigr).$$
The radius-$r$ width of this sequence is 
the maximum size of a radius-$r$ ball in $G$,
which is at most $1+d+\cdots+d^r$, 
so 
$\mw_r(G)\le 1+d+\cdots+d^r$.


If the above merge sequence for graphs of bounded degree seems uninsightful,
this reflects the fact that graphs of bounded degree are trivial from the perspective of Sparsity theory.
The next result is slightly more 
illuminating.
The proof is already given in \Cref{ex:degeneracy}.


\begin{theorem}\label{thm:deg}
  If $G$ is a $d$-degenerate graph then 
  $\mw_1(G)\le d+2.$
\end{theorem}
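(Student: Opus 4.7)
The plan is to convert a degeneracy ordering of $G$ into a construction sequence of radius-$1$ width $d+2$ directly. Fix an ordering $v_1 > v_2 > \cdots > v_n$ of $V(G)$ such that every $v_i$ has at most $d$ neighbors smaller than itself in this ordering, i.e., at most $d$ neighbors in $\{v_{i+1},\dots,v_n\}$. I will process vertices by growing a single ``big'' part from $\{v_1\}$ upwards, absorbing one singleton at a time in reverse degeneracy order.

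Concretely, at the beginning of stage $t$ the partition will be
\[
  \cal P_t = \{\{v_n\},\dots,\{v_{t+1}\},\{v_t,\dots,v_1\}\},
\]
with the invariant that the resolved edges are exactly those edges of $G$ having at least one endpoint in the big part $\{v_t,\dots,v_1\}$, and that no non-edges have been resolved yet. In stage $t$, I will first positively resolve $\{v_{t+1}\}$ against every singleton $\{v_s\}$ ($s > t+1$) with $v_{t+1} v_s \in E(G)$, and then merge $\{v_{t+1}\}$ into the big part, arriving at $\cal P_{t+1}$ and the updated invariant. At the very end I will negatively resolve the unique surviving part with itself to handle the remaining non-edges.

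To control the radius-$1$ width I would analyze, at each intermediate moment, the parts reachable from an arbitrary vertex $v$ in the current resolved graph, splitting into cases. If $v$ lies inside the big part, all of its $G$-edges are already resolved, so $v$ reaches the big part itself plus the singletons housing its back-neighbors of larger index, which number at most $d$ by the degeneracy property—giving at most $d+1$ parts. If $v = v_s$ is a singleton that has not yet been touched in stage $t$, its only resolved neighbors sit in the big part, so $v$ reaches at most $2$ parts. The worst case is the freshly resolved singleton $\{v_{t+1}\}$ just before its merge: it sees itself, the big part (through edges already in $R_t$), and the up to $d$ singletons of its resolved back-neighbors, totalling $d+2$.

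The main thing to be careful about is bookkeeping—verifying that the invariant survives each individual resolve and merge, and that the transient partitions $(\cal P_{t-1},R_t)$ of Definition~\ref{def:ms} do not inflate the count. Both are routine: each resolve in stage $t$ only adds an edge incident to $v_{t+1}$, and the analysis above exhausts the transient moments within a stage. This yields $\mw_1(G) \le d+2$.
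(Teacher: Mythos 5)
Your proof is correct and essentially identical to the paper's own argument (given in \Cref{ex:degeneracy}): the same degeneracy-ordered growth of a single big part, the same resolve-then-merge stages with all non-edges deferred to a final negative self-resolution, and the same worst-case count of $d+2$ at the freshly resolved singleton $\{v_{t+1}\}$. The only transient case your analysis leaves implicit is a singleton $\{v_s\}$ with $s>t+1$ that has just been positively resolved against $\{v_{t+1}\}$, but such a vertex reaches at most three parts, so the bound is unaffected.
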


Next, we prove:
\beintro*

The simple construction from \Cref{thm:deg} does not trivially generalize to bound the radius-$r$ merge-width 
of graphs in classes of bounded expansion,
as it appears that to bound $\mw_r(G)$ for $r>1$, a more involved construction 
is needed, which we now present.

\medskip

We first recall the definition of classes of bounded expansion for completeness only, as we will not use the original definition, only the characterization given in the upcoming \Cref{fact:be}.
By definition,  a graph class $\CC$ has \emph{bounded expansion} if and only if for every $r\in\N$,
there is some $k\in\R$ such that for every graph $G$
which can be obtained from some graph in $\CC$ by first removing vertices and edges, and then contracting 
some vertex subsets of radius ${\le r}$ to single vertices, 
we have that $|E(G)|\le k\cdot |V(G)|$.
Classes of bounded expansion include every class of bounded maximum degree,
the class of planar graphs, and every graph class that excludes some graph  as a  minor, or as a topological minor.


Fix a graph $G$, number $r\in\N$, and total order $\le$ on $V(G)$. 
A vertex \(u\) is \emph{weakly \(r\)-reachable} from a vertex \(v\) if there is a path of length at most \(r\) from \(v\) 
to \(u\) and all vertices on that path are at least as large as \(u\).
Let \(\wreach_r(G,\le,v)\) be the set of vertices that are weakly \(r\)-reachable from \(v\) with respect to the ordering.
Then the weak \(r\)-coloring number of \(G\) is defined as
\[
\wcol_r(G) \coloneqq \min_{\le} \max_{v \in V(G)} |\wreach_r(G,\le,v)|,
\]
where the minimum ranges over all total orders $\le$ of $V$.

\begin{lemma}[\cite{zhu2009colouring}]\label{fact:be}
A graph class $\CC$ has bounded expansion if and only if $\wcol_r(\CC)<\infty$, for all~${r\in\N}$.
\end{lemma}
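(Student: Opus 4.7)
My plan is to establish the two directions separately, each going through the standard connection between weak coloring numbers and the densities of shallow minors -- the latter being precisely what ``bounded expansion'' controls by definition.

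For the easy direction ($\wcol_r(\CC)<\infty$ for all $r$ implies bounded expansion): I would fix $G\in\CC$ together with a total order $\le$ witnessing $\wcol_r(G,\le,\cdot)\le k$, and let $H$ be an arbitrary depth-$\lfloor r/2\rfloor$ minor of $G$, specified by pairwise disjoint connected branch sets $B_1,\dots,B_m \subseteq V(G)$ of radius at most $\lfloor r/2\rfloor$. For each $i$ let $v_i \in B_i$ be the $\le$-minimum vertex of $B_i$. Every edge $B_iB_j$ of $H$ then corresponds to a path of length at most $r$ in $G$ between $v_i$ and $v_j$ (the two intra-branch-set sub-paths, of length $\le\lfloor r/2\rfloor$ each, joined by a single inter-branch-set edge). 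A standard charging argument routes each such edge as a weak $r$-reachability relation from $\max(v_i,v_j)$ to $\min(v_i,v_j)$, giving $|E(H)|\le (k-1)|V(H)|$, which is precisely the required density bound on shallow minors.

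For the harder direction (bounded expansion implies $\wcol_r$ bounded), I would invoke the \emph{transitive fraternal augmentations} of \Nesetril--Ossona de Mendez. Their key theorem is that bounded expansion is preserved under these augmentation operations, so by iterating them a suitable number of times (logarithmic in $r$) one obtains a supergraph $G^{(r)}$ of each $G\in\CC$ which contains, as an edge, every pair of vertices at distance at most $r$ in $G$, and whose class still has bounded expansion -- in particular, bounded degeneracy $d$. Taking a degeneracy ordering $\le$ of $G^{(r)}$, every vertex $v$ has at most $d$ $\le$-smaller neighbors in $G^{(r)}$; since weak $r$-reachability from $v$ in $G$ implies adjacency in $G^{(r)}$, this yields $\wcol_r(G,\le,v)\le d+1$.

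The main obstacle is the second direction, whose deep ingredient is the closure of bounded-expansion classes under transitive fraternal augmentations. Establishing such a closure-preserving operation (or, equivalently, constructing a good ordering directly) is the technical core of the characterization, and any fully self-contained proof must either re-derive the augmentation framework or give a direct inductive construction of an order controlling $r$-reachability -- essentially reproducing Zhu's original argument in \cite{zhu2009colouring}.
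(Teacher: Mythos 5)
This lemma is not proved in the paper at all: it is imported as a black box from Zhu's work \cite{zhu2009colouring}, so there is no internal argument to compare against. Judged on its own, your first direction is the standard charging argument and is essentially correct, up to a radius bookkeeping slip: since $v_i$ is the $\le$-minimum of $B_i$ rather than its center, the sub-path from $v_i$ to the endpoint of the connecting edge can have length up to $2\lfloor r/2\rfloor$ (via the center), so the whole $v_i$--$v_j$ path has length up to $2r+1$, not $r$. This is harmless for the qualitative statement because you may bound depth-$s$ minors using $\wcol_{4s+1}$ instead, and the hypothesis gives you all radii; the charging itself (each edge of $H$ contributes $\min(v_i,v_j)$ to $\wreach(\max(v_i,v_j))$, injectively per endpoint) is sound because every vertex of the path lies in $B_i\cup B_j$ and is therefore $\ge\min(v_i,v_j)$.

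The second direction, however, has a genuine gap. The intermediate object you posit --- a supergraph $G^{(r)}$ of bounded degeneracy that contains an edge between \emph{every} pair of vertices at distance at most $r$ in $G$ --- cannot exist in general. Such a $G^{(r)}$ would contain the $r$-th power $G^r$ as a subgraph, and powers of bounded-expansion graphs have unbounded degeneracy: for the $1$-subdivision of the star $K_{1,n}$ (a tree, so from a class with $\nabla_r\le 1$ for all $r$), the $n$ subdivision vertices are pairwise at distance $2$, so $G^2$ contains $K_n$ and every supergraph of it has degeneracy at least $n-1$. Transitive fraternal augmentations do not produce the graph you describe: the fraternal rule only joins two vertices sharing a common \emph{out}-neighbor (head), so in the subdivided star oriented away from the center the subdivision vertices are never joined, and indeed the correct "short-distance" property of the augmentation sequence is weaker and arc-direction-sensitive. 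Consequently the final step "weak $r$-reachability implies adjacency in $G^{(r)}$, hence $\wcol_r\le d+1$" does not go through. A correct proof of this direction must construct the order differently --- either Zhu's direct inductive construction relating $\wcol_{2r}$ to the densities $\nabla_1,\dots,\nabla_{r}$, or a more careful use of the augmentation machinery that extracts reachability information from the orientation rather than from a symmetric power graph.
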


\Cref{thm:be} follows immediately from the next lemma,
which 
shows that the 
 radius-$r$ merge-width 
is bounded in terms of the weak $(r+1)$-coloring number.

\begin{lemma}\label{lem:wcol}
  For every $k,r\in\N$ and graph $G$ with $\wcol_{r+1}(G)\le k$, we have
  $$\mw_r(G)\le 3\cdot 2^{k} .$$
\end{lemma}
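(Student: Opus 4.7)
The plan is to follow the strategy outlined after the statement. First I would fix a total order $\le$ on $V(G)$ with $|\wreach_{r+1}(G,\le,v)|\le k$ for every $v$, label its vertices $u_1<u_2<\cdots<u_n$, and construct a merge sequence driven by a moving threshold in this order. For $t\in\{0,1,\ldots,n\}$, set $S_t\coloneqq\{u_1,\ldots,u_{n-t}\}$ (the $n-t$ smallest vertices), let $\cal P_t$ be the partition of $V(G)$ whose parts are the singletons $\{u\}$ for $u\in S_t$ together with the equivalence classes of $V\setminus S_t$ under $w\sim w'$ iff $N_G(w)\cap S_t=N_G(w')\cap S_t$, and let $R_t$ consist of all edges of $G$ whose both endpoints lie in $V\setminus S_t$. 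Thus $\cal P_0$ is the singleton partition with $R_0=\emptyset$, while $\cal P_n=\{V(G)\}$ with $R_n=E(G)$.

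Next, I would verify routinely that $(\cal P_0,R_0),\ldots,(\cal P_n,R_n)$ is a merge sequence of $G$ in the sense of \Cref{def:ms}. Monotonicity of $R_t$ is immediate, and $\cal P_{t+1}$ coarsens $\cal P_t$ because passing from $S_t$ to $S_{t+1}$ merely removes $u_{n-t}$, which is absorbed into the neighborhood equivalence on $V\setminus S_{t+1}$ and possibly merges classes that previously differed only on $u_{n-t}$. For the homogeneity condition, the three kinds of pairs $A,B\in\cal P_t$ are handled case by case: two singletons in $S_t$ form a single pair; a singleton $\{u\}\subseteq S_t$ paired with a class $B\subseteq V\setminus S_t$ is homogeneous because every $b\in B$ has the same $N_G(b)\cap S_t$; and two classes in $V\setminus S_t$ have every $G$-edge between them already in $R_t$, so unresolved pairs are all non-edges.

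The hard part will be bounding the radius-$r$ width of $(\cal P_{t-1},R_t)$ by $3\cdot 2^k$. Fix any $v\in V(G)$ and let $B$ denote its $r$-ball in the graph $(V,R_t)$. Since $R_t$ contains no edge incident to $S_t$, either $v\in S_t$ and $B=\{v\}$ reaches a single part, or $v\in V\setminus S_t$ and $B\subseteq V\setminus S_t$. In the latter case the parts of $\cal P_{t-1}$ meeting $B$ split into at most one singleton from $S_{t-1}$, namely $\{u_{n-t+1}\}$ (the unique singleton of $\cal P_{t-1}$ that can lie in $V\setminus S_t$), together with the classes of $V\setminus S_{t-1}$ meeting $B$, which are in bijection with the distinct values of $N_G(w)\cap S_{t-1}$ for $w\in B\cap(V\setminus S_{t-1})$. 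The key observation is that for any such $w$ and any $u\in N_G(w)\cap S_t$, appending the edge $\{w,u\}$ to an $R_t$-path of length $\le r$ from $v$ to $w$ produces a $G$-path of length $\le r+1$ from $v$ to $u$ on which every vertex other than $u$ lies in $V\setminus S_t$ and is therefore strictly larger than $u\in S_t$; hence $u\in\wreach_{r+1}(G,\le,v)$. Thus $N_G(w)\cap S_t\subseteq\wreach_{r+1}(G,\le,v)$, a set of size at most $k$, so $N_G(w)\cap S_t$ takes at most $2^k$ values. Since $N_G(w)\cap S_{t-1}$ is determined by $N_G(w)\cap S_t$ together with the single bit of whether $u_{n-t+1}\in N_G(w)$, there are at most $2^{k+1}$ such classes. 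Combining with the possible singleton $\{u_{n-t+1}\}$ gives at most $2^{k+1}+1\le 3\cdot 2^k$ parts (using $k\ge 1$, which always holds as $v\in\wreach_{r+1}(G,\le,v)$), completing the argument.
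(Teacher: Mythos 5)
Your proposal is correct and follows essentially the same construction as the paper's proof: the same threshold partitions $\cal P_t$ (singletons on the small side, neighborhood classes over $S_t$ on the large side), the same resolved sets $R_t$, and the same key observation that $N_G(w)\cap S_t\subseteq\wreach_{r+1}(G,\le,v)$ for every $w$ in the $r$-ball of $v$ in $(V,R_t)$. The only cosmetic difference is in extracting the factor $3$: the paper notes each part of $\cal P_t$ splits into at most three parts of $\cal P_{t-1}$, while you count $2^{k+1}+1\le 3\cdot 2^k$ directly; both are fine.
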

In particular, we show that $\mw_2(G)$  is bounded in terms of $\wcol_3(G)$,
and we do not know whether it is bounded in terms of $\wcol_2(G)$.

\begin{proof}Fix $r\in\N$, and a graph $G$ with vertex set $V$.
Let $\le$ be an ordering of the vertex set $V$ of $G$ 
such that for every vertex \(v\), the weak $r$-reachability set $\wreach_{r+1}(G,\le,v)$ has size at most $k$.

For $t=1,\ldots,n$, define the following.
\begin{itemize}
  \item Let $L_t$ comprise the $t$ largest elements of $V$ with respect to $\le$, and let $S_t\coloneqq V-L_t$.
  \item Let  $\cal P_t$ be the partition of $V$ into \emph{atomic types}
  over the set $S_t$. That is, $\cal P_t$ partitions $S_t$ into singletons, and partitions vertices  $v\in L_t$ according to their neighborhood $N(v)\cap S_t$~in~$S_t$.
  \item Let $R_t \coloneqq\setof{ab\in E(G)}{a,b\in L_t}$ be the set of all edges in $G$ with both endpoints in $L_t$.
\end{itemize}
We verify that $(\cal P_1,R_1),\ldots,(\cal P_n,R_n)$ is a merge sequence of $G$. Clearly, $\cal P_1$ is the partition into singletons, 
and $\cal P_n$ has one part.
Observe that for $t\in[n]$ and any two parts $A,B$ of $\cal P_t$,
either $A,B\subset L_t$, and therefore  $E(G)\cap AB\subset R_t$,
or, otherwise, at least one of $A,B$ is contained in $S_t$, and then $A$ and $B$ are homogeneous,
as both $A$ and  $B$ are atomic types over $S_t$. 
In any case, $AB-R_t\subset E(G)$ or 
$AB-R_t\subset {V\choose 2}-E(G)$, so the conditions of a merge sequence are satisfied.

Fix $t\in[n]$. Fix two vertices $v,w\in L_t$, such that 
there is a path of length at most $r$ in $(V,R_t)$ from $v$ to $w$.
Then there is a path $\pi$ of length at most \(r\) from $v$ to $w$ in $G$
contained in $L_t$.
In particular, for every vertex $u\in N_G(w)\cap S_t$ 
there is a path from $v$ to $u$ of length at most $r+1$ in \(G\) (namely the path $\pi$ followed by the edge $wu$) such that every inner vertex on that path is in $L_t$.
In other words, $N_G(w)\cap S_t \subseteq \wreach_{r+1}(G,\le,v)$.
 As $|\wreach_{r+1}(G,\le,v)|\le k$ by assumption, and 
 the atomic type of $w$ over $S_t$ is uniquely 
determined by $N(w)\cap S_t$, it follows that 
there are at most $2^k$ parts of $\cal P_t$ 
that are reachable by a path of length at most $r$ from $v$.
Thus, the radius-$r$ width of $(\cal P_t,R_t)$ is at most
$2^k$.

To bound the radius-$r$ width of $(\cal P_{t-1},R_t)$, for $t>1$, notice that every part in $\cal P_{t}$ 
is a union of at most three parts in $\cal P_{t-1}$.
 Namely, if $v$ is the unique vertex with $v\in S_{t-1}-S_{t}$, then the atomic type $A$ of a vertex $u$ over $S_{t}$ 
 is uniquely determined by the atomic type of $u$ over $S_{t-1}$ and the atomic type of $u$ over $\set v$. 
 There are three 
 possible atomic types over $\set v$, corresponding 
 to
 whether $u=v$, or $(u\neq v)\land E(u,v)$, or 
$(u\neq v)\land \neg E(u,v)$. This proves that $A$ is a union of at most three parts in $\cal P_{t-1}$.

It follows that the radius-$r$ width of $(\cal P_{t-1},R_t)$ is at most $3\cdot 2^k$, and we have constructed a merge sequence of $G$ 
of radius-$r$ width at most $3\cdot 2^k$.
\end{proof}

\subsection{Merge-width and flip-width}\label{sec:fw}
We study the relationship between merge-width and flip-width, whose definition we now recall.

For a graph $G$ and 
two sets $A,B\subset V(G)$,
\emph{flipping} the pair $(A,B)$ in $G$ 
results in the graph $G'$ 
with edges $E(G')=E(G)\triangle AB$, where \(\triangle\) denotes the symmetric difference.
Given a partition $\cal P$ of $V(G)$, 
a \emph{$\cal P$-flip} of $G$ is a graph $G'$
obtained from $G$ by flipping arbitrary pairs $A,B\in\cal P$ (possibly with $A=B$).
Thus, a graph $G'$ is a $\cal P$-flip of $G$ if and only if for every pair $A,B\in\cal P$, 
either $E(G')\cap AB=E(G)\cap AB$ or $E(G')\cap AB=AB-E(G)$.
A \(k\)-flip is a \(\cal P\)-flip with \(|\cal P| \le k\).

We recall the definition of flip-width from \cite{flip-width}.
Fix $k,r\in\N$.
The \emph{flip-width game} of radius $r$ and width $k$ is played on a graph $G$ between two players, flipper and runner.
Initially, runner picks a vertex $v_0\in V(G)$.
In round $t=1,2,\ldots$,
    flipper announces a $k$-flip $G_t$ of $G$;
    then runner picks $v_t\in B^r_{G_{t-1}}(v_{t-1})$ -- that is, they traverse 
    a path of length at most $r$ in the \emph{previous} graph $G_{t-1}$.
    Flipper wins if $v_t$ is isolated in $G_t$.
The \emph{radius-$r$ flip-width} of $G$, denoted $\fw_r(G)$,
is the least number $k$ such that flipper has a winning strategy for the flip-width game of width $k$ and radius $r$ on $G$.
We prove:
\begin{theorem}\label{thm:fw-cases}
  Every class of bounded merge-width has bounded flip-width.
  More precisely, for all $r\in\N$ and every graph $G$,
  $$\fw_r(G)\le 4^{\mw_{2r-1}(G)}.$$
\end{theorem}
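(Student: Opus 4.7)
The plan is to translate a given construction sequence of $G$ of radius-$(2r-1)$ width at most $w\coloneqq\mw_{2r-1}(G)$ into an explicit winning strategy for the flipper in the radius-$r$ flip-width game of width $4^w$ on $G$. Fix such a construction sequence $(\cal P_t,R_t)_{t=1,\ldots,m}$. The flipper's strategy will assign to each round $s$ of the game a step index $t_s\in[m]$ of the construction sequence, chosen so that $t_s$ is monotonically nondecreasing in $s$; this will yield termination within at most $m$ rounds.

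In round $s$ with runner at $v_{s-1}$, the flipper identifies the set
\[
\cal Q_s \coloneqq \bigl\{A\in\cal P_{t_s}\mid A\text{ is $(2r-1)$-reachable from }v_{s-1}\text{ in }(V,R_{t_s})\bigr\},
\]
which satisfies $|\cal Q_s|\le w$ by the width assumption. The flipper partitions $V - \bigcup\cal Q_s$ by the \emph{default profile} $p\colon\cal Q_s\to\{0,1\}$ where $p(A)$ is the default connection from $v$'s part in $\cal P_{t_s}$ to $A$; this yields at most $2^w$ profile classes. The flip partition $\cal F_s$ consists of the parts in $\cal Q_s$ together with these profile classes, so $|\cal F_s|\le w+2^w\le 4^w$. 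The flip inverts each pair whose default connection is ``adjacent'': pairs $A,A'\in\cal Q_s$ are flipped iff $d(A,A')=1$, pairs $(A,X_p)$ for $A\in\cal Q_s$ are flipped iff $p(A)=1$, while pairs of profile classes are left untouched.

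The key property of this flip is that every \emph{unresolved} pair at step $t_s$ becomes a non-edge in $G_s$; consequently, edges of $G_s$ incident to $\bigcup\cal Q_s$ lie entirely within $R_{t_s}$, and in fact only resolved pairs that \emph{deviate} from the default remain as edges. Moreover, since the edges of $G_{s-1}$ are a subset of $R_{t_{s-1}}\subseteq R_{t_s}$, and $r\le 2r-1$, we get the containment
\[
B^r_{G_{s-1}}(v_{s-1})\subseteq B^r_{R_{t_s}}(v_{s-1})\subseteq B^{2r-1}_{R_{t_s}}(v_{s-1})\subseteq \bigcup\cal Q_s,
\]
so the runner cannot escape the controlled region $\bigcup\cal Q_s$. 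Thus, whether some $v\in B^r_{G_{s-1}}(v_{s-1})$ is isolated in $G_s$ is determined entirely by its ``resolved-deviation'' neighbors, and the flipper's freedom to advance $t_s$ gives them leverage to eventually eliminate all such neighbors.

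The main obstacle will be the termination argument: ensuring the flipper can advance $t_s$ in a way that every $v\in B^r_{G_{s-1}}(v_{s-1})$ is eventually isolated in $G_s$. The intuition is that as $t_s$ grows, the construction sequence consolidates information via merges and additional resolves, so at some step the resolved pairs incident to $\bigcup\cal Q_s$ that deviate from the default get absorbed into merges (i.e., cease to connect distinct parts of $\cal F_s$). Making this rigorous will likely require a carefully chosen potential function --- for instance, the lexicographic pair $(m-t_s,\text{number of deviating resolved pairs within }\bigcup\cal Q_s)$ --- and an inductive argument on the construction sequence. A delicate point is handling boundary effects when a profile class aggregates vertices from multiple parts of $\cal P_{t_s}$ that interact asymmetrically through $R_{t_s}$; here one may need to exploit the fact that the runner's ball stays inside $\bigcup\cal Q_s$, so such asymmetries never directly affect the isolation condition.
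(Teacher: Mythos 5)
There is a genuine gap, and it lies exactly where you flagged it: the termination argument. The deeper problem is that you traverse the construction sequence in the wrong direction. In your strategy the step index $t_s$ is nondecreasing, so the set $R_{t_s}$ of resolved pairs only \emph{grows} as the game proceeds. Your flip makes all \emph{unresolved} pairs into non-edges, so the edges of $G_s$ near the runner are precisely the resolved pairs deviating from the current default --- and this set of deviations can only accumulate as $t_s$ advances. In particular, merges do not help: a resolved vertex pair $uv$ that deviates from the default between its parts remains an edge of the flipped graph even after both endpoints land in the same part, because the flip inverts entire part-pairs according to the default and hence never cancels a vertex-level deviation. At the terminal step the partition has one part and every pair is resolved, so your flip is either $G$ itself or its complement; for, say, a perfect matching neither graph isolates any vertex, so the runner simply never gets caught. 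The potential function you sketch cannot repair this, because the quantity ``number of deviating resolved pairs incident to the runner'' is monotonically nonincreasing in the wrong direction.

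The paper's proof runs the sequence \emph{backwards}. It first reformulates a merge sequence as a ``restrained flip sequence'' $(\cal P_1,R_1,G_1),\ldots,(\cal P_m,R_m,G_m)$ in which the partitions \emph{refine}, the restraints satisfy ${V\choose 2}=R_1\supseteq\cdots\supseteq R_m=\emptyset$, each $G_t$ is a $\cal P_t$-flip of $G$ with $E(G_t)\subseteq R_t$, and consequently $G_m$ is \emph{edgeless} (\Cref{lem:flip}, \Cref{lem:mw-flip seq}). The flipper then plays coarsened versions $G_t'$ of these flips, maintaining the invariant $B^r_{G_t'}(v_t)\subseteq B^r_{G_t}(v_t)$; termination is automatic because $E(G_m)=\emptyset$. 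The local coarsening to at most $4^w$ parts (\Cref{lem:strategy}) is close in spirit to your default-profile construction, and your containment chain $B^r_{G_{s-1}}(v_{s-1})\subseteq B^{2r-1}_{R}(v_{s-1})\subseteq\bigcup\cal Q$ is the right local argument; what you are missing is the global reversal that makes the resolved/restrained sets shrink to nothing rather than grow to everything.
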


\Cref{thm:fw-cases} allows to deduce several results about classes of bounded merge-width
from the corresponding results for classes of bounded flip-width.
For instance, we obtain the two corollaries stated in the introduction:
\corbe*
The forward implication in \Cref{cor:be} follows from \Cref{thm:be} and the trivial fact that each class of bounded expansion excludes some biclique as a subgraph. The backwards direction follows from \Cref{thm:fw-cases} and from the analogous statement for classes of bounded flip-width, proved in \cite[Thm. VI.3]{flip-width}.

\cortww*
In \Cref{cor:tww-mw}, we use the notion of merge-width for binary structures, specifically for \emph{ordered graphs} --  graphs equipped with a total order on the vertex set. This is defined in \Cref{sec:computing}.  \Cref{thm:fw-cases} applies to classes binary structures, allowing to derive \Cref{cor:tww-mw} by a similar reasoning as for \Cref{cor:be} above:
The forward direction uses \Cref{thm:tww}and the fact that every graph can be extended with a total order, without increasing its twin-width \cite{tww4}, and the backwards direction uses \Cref{thm:fw-cases} and the analogue of \Cref{cor:tww-mw} proved in \cite[Thm. VII.3]{flip-width}.
For simplicity, below we present the proof of \Cref{thm:fw-cases} only for graph classes. The more general case is analogous, 
but the requires notion of flips and of flip-width for binary structures as defined in \cite[Sec. V.B]{flip-width}.

We state another corollary that follows from \Cref{thm:deg} and an analogous result from \cite[Thm. VI.1]{flip-width}.
\begin{corollary}\label{cor:deg}
  Let $\CC$ be a graph class. Then $\CC$ has bounded degeneracy if and only if $\mw_1(\CC)<\infty$ and $\CC$ excludes some biclique $K_{t,t}$ as a subgraph.
\end{corollary}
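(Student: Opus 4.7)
The plan is to prove both directions separately, each in a short step.

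For the forward direction, assume $\CC$ has bounded degeneracy, say every $G\in\CC$ is $d$-degenerate. \Cref{thm:deg} (proved via \Cref{ex:degeneracy}) directly gives $\mw_1(G)\le d+2$ for every $G\in\CC$, hence $\mw_1(\CC)<\infty$. Moreover, a $d$-degenerate graph cannot contain $K_{d+1,d+1}$ as a subgraph: in $K_{d+1,d+1}$ every vertex has degree $d+1$, and every induced subgraph on $\ge 1$ vertex of $K_{d+1,d+1}$ still has a vertex of degree $\ge d+1$ (since removing vertices from one side keeps vertices on the other side with degree at least $1$, but more carefully, any nonempty subgraph of $K_{d+1,d+1}$ with at least one vertex on each side has minimum degree $\ge 1$; the standard argument is that $K_{d+1,d+1}$ is $(d{+}1)$-regular and bipartite, so every subgraph of it contains a vertex of degree $\ge d+1$ in the whole graph, contradicting the defining ordering witnessing $d$-degeneracy). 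Thus $\CC$ excludes $K_{d+1,d+1}$ as a subgraph, completing this direction.

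For the backward direction, assume $\mw_1(\CC)<\infty$ and $\CC$ excludes some $K_{t,t}$ as a subgraph. By \Cref{thm:fw-cases} applied with $r=1$, we have
\[
    \fw_1(\CC) \le 4^{\mw_1(\CC)} < \infty,
\]
so $\CC$ has bounded radius-$1$ flip-width. The corresponding characterization for flip-width, namely \cite[Thm. VI.1]{flip-width}, states that a weakly sparse graph class has bounded radius-$1$ flip-width if and only if it has bounded degeneracy. Applying this to $\CC$ yields that $\CC$ has bounded degeneracy.

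The two directions together give the claimed equivalence. No step here is a genuine obstacle: the forward direction is a direct application of \Cref{thm:deg} and an elementary degree count, while the backward direction packages \Cref{thm:fw-cases} together with the already-established flip-width analogue. The only place that requires any care is the choice of a biclique size: one should remark that the value of $t$ for which $\CC$ excludes $K_{t,t}$ in the conclusion of the forward direction depends on the degeneracy bound, while in the backward direction the degeneracy bound produced depends on both $\mw_1(\CC)$ and on the excluded biclique size (through the quantitative form of \cite[Thm. VI.1]{flip-width}).
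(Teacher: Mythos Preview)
Your approach is correct and matches the paper's: the forward direction uses \Cref{thm:deg} together with the elementary fact that $d$-degenerate graphs are $K_{d+1,d+1}$-subgraph-free, and the backward direction combines \Cref{thm:fw-cases} (for $r=1$, giving $\fw_1(G)\le 4^{\mw_1(G)}$) with \cite[Thm.~VI.1]{flip-width}. One cosmetic remark: your parenthetical justification for the biclique exclusion is muddled (the claim that every induced subgraph of $K_{d+1,d+1}$ has a vertex of degree $\ge d+1$ is false); the clean argument is simply that $K_{d+1,d+1}$ has minimum degree $d+1$, so any graph containing it as a subgraph has degeneracy at least $d+1$.
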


\medskip
To prove \Cref{thm:fw-cases},
we start with a simple observation that allows us to rephrase the main condition from the definition 
of a merge sequence in terms of flips, as follows.

\begin{lemma}\label{lem:flip}
    Let $G=(V,E)$ be a graph, let $\cal P$ be a partition of $V$, and let $R\subset {V\choose 2}$.
    The following conditions are equivalent:
    \begin{enumerate}
        \item for all $A,B\in\cal P$, either $AB-R\subset E$, or $AB-R\subset AB - E$,
        \item  there is a $\cal P$-flip $G'$ of $G$ with $E(G')\subset R$.
    \end{enumerate}
    \end{lemma}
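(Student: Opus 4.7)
The plan is to prove both implications by a direct case analysis on each pair $(A,B)\in\cal P\times\cal P$, exploiting the fact that a $\cal P$-flip is determined by a pointwise (per pair of parts) binary choice.

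For the forward direction (1)$\Rightarrow$(2), I would \emph{define} the target flip $G'$ as follows: for each pair $A,B\in\cal P$, flip the pair $(A,B)$ iff $AB\setminus R\subset E$; otherwise, leave it unflipped. Condition (1) guarantees this choice is well-defined (and when $AB\setminus R=\emptyset$ either option works). I then verify $E(G')\subset R$ by checking that every $uv\in AB\setminus R$ is a non-edge of $G'$: in the ``flipped'' case we have $uv\in E$, which after flipping becomes a non-edge; in the ``unflipped'' case condition (1) forces $uv\notin E$, and it stays a non-edge. Hence all edges of $G'$ lie inside $R$.

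For the backward direction (2)$\Rightarrow$(1), given a $\cal P$-flip $G'$ with $E(G')\subset R$, for each pair $A,B\in\cal P$ we know whether this particular pair was flipped or not. If it was not flipped then $E\cap AB=E(G')\cap AB\subset R$, so $AB\setminus R\subset AB\setminus E$. If it was flipped then for $uv\in AB\setminus R$ we have $uv\notin E(G')$ (since $E(G')\cap AB\subset R$), which after undoing the flip gives $uv\in E$, i.e.\ $AB\setminus R\subset E$. Either way, condition (1) holds.

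I do not expect any real obstacle here: the lemma is essentially a reformulation of the definition of a $\cal P$-flip, and both directions are bookkeeping. The only subtle point is the symmetric handling of the degenerate case $AB\setminus R=\emptyset$, where the ``default connection'' is not determined by $G$ and $R$ alone, but neither condition constrains the choice of flip for such a pair, so the equivalence goes through unchanged.
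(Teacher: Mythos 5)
Your proof is correct and follows essentially the same route as the paper: the forward direction defines $G'$ by flipping exactly the pairs $A,B$ with $AB\setminus R\subset E$, and the backward direction is the same per-pair case analysis on whether $E(G')\cap AB$ equals $E(G)\cap AB$ or $AB\setminus E(G)$. Your remark about the degenerate case $AB\setminus R=\emptyset$ is a harmless extra observation; the paper does not need to single it out and neither do you.
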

    \begin{proof}
        (1$\rightarrow$2).
        The $\cal P$-flip $G'$
        is obtained from $G$ by flipping each pair $A,B\in\cal P$ such that $AB-R\subset E$. As a result $AB-R\subset AB-E(G')$, 
        and therefore $E(G')\cap AB\subset R$ for each $A,B\in\cal P_t$.
        Altogether, $E(G')\subset R$.
    
        (2$\rightarrow$1). Fix $A,B\in\cal P$. We have that
        $E(G')\cap AB\subset R\cap AB$,
        so $AB-R\subset AB-E(G')$.
        As $G'$ is a $\cal P$-flip of $G$,
        we have that either $E(G')\cap AB=E(G)\cap AB$ or $E(G')\cap AB=AB-E(G)$.
        Altogether, either $AB-R\subset E(G)\cap AB$, or  $AB-R\subset AB-E(G)$.
    \end{proof}
    
Using \Cref{lem:flip}, we may therefore redefine merge-width in terms of flips, as follows.



\begin{definition}Let $G$ be a graph.
A \emph{restrained flip sequence} for $G$ is a sequence
\begin{align}\label{eq:mono-fw}
    (\cal P_1,R_1,G_1),\ldots,(\cal P_m,R_m,G_m)    
\end{align}
such that:
\begin{itemize}
    \item $\cal P_1,\ldots,\cal P_m$ is a refining sequence of partitions, starting with the  partition $\cal P_1$ of $V(G)$ with one part, and ending with the partition $\cal P_m$ into singletons,
    \item the graph $G_t$ is a $\cal P_{t}$-flip of $G$, for $t\in [m]$,
    \item ${V\choose 2}= R_1\supseteq \cdots \supseteq R_m=\emptyset$, and
    \item  $E(G_t)\subset R_t$ for $t\in[m]$.
\end{itemize}
The radius-$r$ width of the restrained flip sequence is 
the maximum, over $t\in[m-1]$,
of the radius-$r$ width of $(\cal P_{t+1},R_{t})$.
\end{definition}

Note that in the reformulation above,
the partitions $\cal P_1,\ldots,\cal P_m$ are becoming finer with each step, instead of becoming coarser as in merge sequences. In each step of a restrained flip sequence, we provide a $\cal P_t$-flip $G_t$.
The sequence $R_1\supseteq \cdots\supseteq R_m$
is a descending sequence of subsets of $V\choose 2$ with $R_m=\emptyset$. The set $R_t$ is called the \emph{restraint} at time $t$, as we require that $E(G_s)\subset R_t$ for all $s\in[m]$ with $s\ge t$, thus restraining all the future graphs $G_t,G_{t+1},\ldots,G_m$.  Note that  $E(G_m)\subset R_m=\emptyset$, so $G_m$ is edgeless.

As $G_t$ is a $\cal P_t$-flip of $G$, 
$G_{t-1}$ is a $\cal P_{t-1}$-flip of $G$, and $\cal P_{t-1}$ is coarser than $\cal P_t$,
we may equivalently require that $G_t$ as a $\cal P_{t}$-flip of $G_{t-1}$, rather than of $G$.
Therefore, in the sequence $G_1,G_2,G_3,\ldots$ 
each subsequent graph is a flip of the previous graph.
This is similar to the setting of flipper games considered in \cite{flippers}.


\begin{lemma}\label{lem:mw-flip seq}
    For every graph $G$,
    there is a correspondence between merge sequences and restrained flip sequences for $G$,
    which preserves the length, and the radius-$r$ width of the sequences, for each $r\in\N\cup\set{\infty}$.
\end{lemma}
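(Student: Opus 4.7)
The plan is to establish the correspondence by reversing the sequence in time, using \Cref{lem:flip} to switch between the homogeneity condition (item 3 of \Cref{def:ms}) and the existence of a $\cal P$-flip $G'$ of $G$ with $E(G') \subseteq R$. We may assume without loss of generality that a merge sequence $(\cal P_1, R_1), \ldots, (\cal P_m, R_m)$ of $G$ satisfies $R_1 = \emptyset$ and $R_m = {V\choose 2}$. The first condition is free since $\cal P_1$ is the singleton partition, making item 3 of \Cref{def:ms} vacuous, so we can redefine $R_1$ as $\emptyset$; the second is consistent with the construction sequence definition in the gray box of \Cref{sec:intro} and, if needed, can be arranged by appending a terminal pair $(\{V\}, {V\choose 2})$ whose radius-$r$ width is $1$ and hence does not affect the overall width.

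From such a normalized merge sequence, apply \Cref{lem:flip} (direction 1 $\Rightarrow$ 2) at each step $t$ to obtain a $\cal P_t$-flip $G_t$ of $G$ with $E(G_t) \subseteq R_t$, and define the reversed sequence $(\cal P'_t, R'_t, G'_t) := (\cal P_{m+1-t}, R_{m+1-t}, G_{m+1-t})$ for $t \in [m]$. Then $\cal P'_1 = \{V\}$ and $\cal P'_m$ is the singleton partition, with $\cal P'_1, \ldots, \cal P'_m$ forming a refining chain; the sets satisfy ${V\choose 2} = R'_1 \supseteq R'_2 \supseteq \ldots \supseteq R'_m = \emptyset$; and each $G'_t$ is a $\cal P'_t$-flip of $G$ with $E(G'_t) \subseteq R'_t$. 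This is a valid restrained flip sequence. Conversely, given a restrained flip sequence $(\cal P'_t, R'_t, G'_t)_{t=1}^m$, defining $(\cal P_t, R_t) := (\cal P'_{m+1-t}, R'_{m+1-t})$ yields items 1 and 2 of \Cref{def:ms} immediately, while item 3 follows from the existence of the flip $G'_{m+1-t}$ via \Cref{lem:flip} (direction 2 $\Rightarrow$ 1).

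Both translations preserve the length (they merely reverse the index), and they preserve the radius-$r$ width for every $r \in \N \cup \{\infty\}$. Indeed, the radius-$r$ width of a merge sequence at step $t \in [2,m]$ is by definition the radius-$r$ width of $(\cal P_{t-1}, R_t)$, whereas the radius-$r$ width of a restrained flip sequence at step $t' \in [m-1]$ is that of $(\cal P'_{t'+1}, R'_{t'})$. Setting $t' := m+1-t$ (so $t \in [2,m]$ iff $t' \in [m-1]$) gives $\cal P'_{t'+1} = \cal P_{m-t'} = \cal P_{t-1}$ and $R'_{t'} = R_{m+1-t'} = R_t$, so the two maxima range over exactly the same pairs. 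The main technicality, which is mild, lies in the boundary normalization of $R_1$ and $R_m$; once this is handled, the rest is a routine verification of axioms using \Cref{lem:flip}.
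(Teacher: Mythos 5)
Your proof is correct and follows essentially the same route as the paper: reverse the sequence in time and use \Cref{lem:flip} in both directions to translate between the homogeneity condition and the existence of a restrained flip, with the index shift matching the widths exactly. Your explicit normalization of $R_1$ and $R_m$ is a welcome extra care that the paper's proof glosses over (note only that appending a terminal pair changes the length by one, which is harmless here).
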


\begin{proof}
    Fix a merge sequence 
    $(\cal P_1,R_1),\ldots,(\cal P_m,R_m)$
    of $G$.
 By \Cref{lem:flip} (1$\rightarrow$2),
 for each $t\in[m]$,
    there is a  $\cal P_t$-flip $G_t$ of $G$ such that 
$E(G_t)\subset R_t$.
    The sequence 
    $(\cal P_m,R_m,G_m),\ldots,(\cal P_1,R_1,G_1)$
    is a restrained flip sequence for $G$.
    
    Conversely,
    given a restrained flip sequence for $G$ of the form $(\cal P_1,R_1,G_1),\ldots,(\cal P_m,R_m,G_m)$,
    by \Cref{lem:flip} (2$\rightarrow$1) we conclude that 
    the sequence $(\cal P_m,R_m),\ldots,(\cal P_1,R_1)$ is a merge sequence of $G$.
\end{proof}

The next lemma shows that radius-$r$ flip-width is bounded in terms of radius-$(2r-1)$ merge-width, thus proving \Cref{thm:fw-cases}.

\begin{lemma}\label{lem:fw}
    Fix $r,s\in\N$. 
    For every graph $G$, if $\mw_{2r-1}(G)\le s$ then  $\fw_r(G)\le 4^s$. 
\end{lemma}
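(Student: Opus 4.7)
The plan is to translate the given merge sequence into a winning strategy for flipper in the radius-$r$ flip-width game of width $4^s$, using the restrained flip sequence characterization from \Cref{lem:mw-flip seq} as the main bridge.

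Concretely, I would fix a restrained flip sequence $(\cal Q_1,S_1,H_1),\ldots,(\cal Q_m,S_m,H_m)$ for $G$ of radius-$(2r-1)$ width at most $s$, and exploit the following features: $\cal Q_t$ refines from one part to singletons, the restraints $S_t$ shrink from ${V\choose 2}$ to $\emptyset$, each $H_t$ satisfies $E(H_t)\subseteq S_t$ (so $H_m$ is edgeless), and for every $t\in[m-1]$ and every vertex $v$, at most $s$ parts of $\cal Q_{t+1}$ are $(2r-1)$-reachable from $v$ in $(V,S_t)$.

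I would then define flipper's strategy to play one round per step of the sequence. At round $t$, observing the runner's position $v_{t-1}$, flipper will compute the set $L_t\subseteq\cal Q_{t+1}$ of at most $s$ parts that are $(2r-1)$-reachable from $v_{t-1}$ in $(V,S_t)$, and play a $4^s$-flip $G_t$ that (i) reproduces $H_{t+1}$ on $\bigcup L_t$, and (ii) groups the remaining vertices into equivalence classes by their atomic type over $L_t$. Membership in $L_t$ contributes at most $s+1$ classes, and flip-status with each of the $s$ parts of $L_t$ contributes a further factor of $2^s$, yielding at most $(s+1)\cdot 2^s\le 4^s$ equivalence classes. The strategy terminates at round $m-1$, where $\cal Q_m$ consists of singletons and $H_m$ is edgeless, so $G_{m-1}$ isolates the runner on the tracked region, winning the game.

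The main obstacle, and the heart of the correctness argument, will be establishing the invariant that after round $t$ the runner's new position $v_t\in B^r_{G_{t-1}}(v_{t-1})$ still lies in some part of $L_t$. This amounts to tracing that the $r$-ball around $v_{t-1}$ in $G_{t-1}$ stays within the $(2r-1)$-neighborhood of $v_{t-1}$ in $(V,S_t)$ -- precisely the factor-of-two expansion encoded in the hypothesis $\mw_{2r-1}(G)\le s$, and the reason the merge-width parameter at radius $2r-1$ (rather than $r$) appears in the statement. A secondary subtlety is verifying that the coarsening into $4^s$ classes yields a legitimate flip that correctly mimics $H_{t+1}$ on the tracked parts; this should follow from the homogeneity guarantees of the merge sequence (cf.\ \Cref{remark:maindef}, item~2) once the equivalence classes are set up via atomic types.
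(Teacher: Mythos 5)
Your proposal follows essentially the same route as the paper's proof: convert the merge sequence to a restrained flip sequence via \Cref{lem:mw-flip seq}, have flipper play one flip per step of that sequence built from the at most $s$ parts reachable from the runner's position in the restraint graph, count the resulting flip classes (the paper gets $2^{2s}=4^s$ via membership in the tracked parts together with the sets $U(A)$ recording flip-status, matching your $(s+1)\cdot 2^s\le 4^s$), and establish the ball-containment invariant whose radius-doubling is exactly why $\mw_{2r-1}$ appears. The plan is correct and the step you flag as the heart of the argument is indeed the content of the paper's \Cref{cl:balls}.
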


\begin{proof}
    Let $G$ be a graph with $\mw_{2r-1}(G)\le s$. Then $G$ has a merge sequence of radius-$(2r-1)$ merge-width at most $s$.
    By \Cref{lem:mw-flip seq}, $G$ has a 
 restrained flip sequence of radius-$(2r-1)$ width at most \(s\) of the form
$$(\cal P_1,R_1,G_1),\ldots,(\cal P_n,R_n,G_n).$$

The following lemma will allow us to construct a strategy for flipper in the flip-width game.

\begin{lemma}\label{lem:strategy}
    Fix $t\in [2,n]$, and 
    let $s$ be the radius-$(2r-1)$ width of $(\cal P_t,R_{t-1})$.
    For every $v\in V(G)$ there is a $4^s$-flip $G_t'$ of $G$ such that:
$$B^r_{G_t'}(w)\subset B^r_{G_t}(w)\quad\text{for every $w\in B^r_{G_{t-1}}(v)$.}$$
\end{lemma}
Before proving \Cref{lem:strategy}, we first show how it yields a winning strategy
for flipper in the flip-width game of radius $r$
    and width $4^s$.
Flipper's strategy will be to announce graphs $G_1',G_2',\ldots$, ensuring  that following invariant holds after round $t\ge 1$ of the game:
    \begin{align}\label{eq:fw-invariant}
    B^r_{G_{t}'}(v_t)\subset B^r_{G_t}(v_t).    
    \end{align}
    
    In the first round, when $t=1$, flipper announces $G_1'=G_1$ and runner picks a vertex 
    $v_1$,
    and the invariant holds trivially.
    Suppose the invariant is satisfied after round $t-1$ of the game,
    that is, 
    $$B^r_{G_{t-1}'}(v_{t-1})\subset B^r_{G_{t-1}}(v_{t-1}).$$
    Now flipper announces the $4^s$-flip $G_t'$ of $G$
    given by \Cref{lem:strategy} for $v \coloneqq v_{t-1}$.
    Next, runner picks a vertex $v_t\in B^r_{G_{t-1}'}(v_{t-1})$.
In particular, $v_t\in B^r_{G_{t-1}}(v_{t-1})$, so 
 \eqref{eq:fw-invariant} holds by \Cref{lem:strategy}, and the invariant is fulfilled.

    Playing according to this strategy, flipper wins within $n$ rounds,
    since $E(G_n)=\emptyset$, and therefore $v_n$ is isolated in $G_n'$
    by \eqref{eq:fw-invariant}. This proves that $\fw_r(G)\le 4^s$, and thus \Cref{lem:fw}.
\end{proof}

\begin{remark}\label{rem:duration}
    Observe that the number of rounds needed by flipper to win in the flip-width game of radius $r$ and width $4^s$ can be bounded by $|V(G)|$.
    Namely, the proof of \Cref{lem:fw} above shows that 
    the number of rounds is (at most) equal to the length $n$ of a restrained flip sequence 
     of radius-$(2r-1)$ width at most \(s\) for $G$.
     By \Cref{lem:mw-flip seq}, this corresponds to the length of a radius-$(2r-1)$ merge sequence of width $s$. Any merge sequence of $G$ can be converted into one of length $n=|V(G)|$, while preserving its radius-$r$ width for each $r\in\N$, since if for two consecutive pairs $(\cal P_i, R_i),(\cal P_{i+1},R_{i+1})$ in a merge sequence we have $\cal P_i=\cal P_{i+1}$, then $(\cal P_{i+1},R_{i+1})$ can be dropped from the merge sequence.
\end{remark}

We now prove \Cref{lem:strategy}.

    \begin{proof}[Proof of \Cref{lem:strategy}]
        Fix $v\in V(G)$. Let
 $\cal N\subset \cal P_t$ consist of all parts $A\in\cal P_t$ that can be reached by a path of length at most $2r-1$ by from $v$ in the graph $(V,R_{t-1})$.
In particular, $|\cal N|\le s$.

Let $G_t'=G'=(V,E)$ be the graph 
such that for any two parts $A,B\in\cal P_t$:
$$E'\cap AB=
\begin{cases}
    E(G)\cap AB&\text{if $A,B\notin\cal N$}\\
    E(G_t)\cap AB&\text{otherwise}.
\end{cases}$$

\begin{claim}\label{cl:k-flip}
    $G'$ is a $4^s$-flip of $G$.
\end{claim}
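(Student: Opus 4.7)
The plan is to exhibit an explicit partition $\cal P'$ of $V(G)$ with $|\cal P'| \le 4^s$ and to verify directly that $G_t'$ arises from $G$ by flipping some of the pairs of parts of $\cal P'$. Write $\cal Q := \cal Q_{2r-1} \subseteq \cal P_t$, so by hypothesis $|\cal Q| \le s$. Since $G_t$ is a $\cal P_t$-flip of $G$, for every pair of parts $A, B \in \cal P_t$ the set $E(G_t) \cap AB$ equals either $E(G) \cap AB$ or $AB \setminus E(G)$; call the pair \emph{flipped} in the latter case. The definition of $G_t'$ then says that on any $AB$ with both $A, B \notin \cal Q$, $G_t'$ agrees with $G$ (no flip), while on any $AB$ meeting $\cal Q$, $G_t'$ agrees with $G_t$.

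The construction of $\cal P'$ is as follows: keep every $A \in \cal Q$ as its own part of $\cal P'$, and to each $C \in \cal P_t \setminus \cal Q$ assign the \emph{flip pattern} $\chi(C) \in \{0,1\}^{\cal Q}$ whose $A$-th coordinate records whether the pair $(A, C)$ is flipped in $G_t$. Group the remaining parts of $\cal P_t \setminus \cal Q$ into one $\cal P'$-part per realized pattern $\chi$. Then $|\cal P'| \le |\cal Q| + 2^{|\cal Q|} \le s + 2^s \le 4^s$.

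It remains to check that $G_t'$ is a $\cal P'$-flip of $G$, which I would do by cases on a pair $P, P' \in \cal P'$. If both $P, P' \in \cal Q$, the edges of $G_t'$ inside $PP'$ coincide with those of $G_t$, which is a $\cal P_t$-flip, so the pair is homogeneously flipped or not. If neither $P$ nor $P'$ lies in $\cal Q$, then every $\cal P_t$-subpair inside $PP'$ falls into the first case of the definition of $G_t'$ and is therefore unflipped, so $E(G_t') \cap PP' = E(G) \cap PP'$. The only subtle case is $P = A \in \cal Q$ and $P' = \bigcup_j C_j$ a merged part built from $C_1, C_2, \ldots \in \cal P_t \setminus \cal Q$: the second case of the definition of $G_t'$ gives $E(G_t') \cap A C_j = E(G_t) \cap A C_j$, and by construction the flip bit $\chi(C_j)(A)$ is constant in $j$, so the pair $(A, P')$ is uniformly flipped or uniformly unflipped in $G_t'$ relative to $G$. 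I do not foresee a real obstacle: the pattern classification is engineered precisely to make this last case go through, and the slack between $s + 2^s$ and $4^s$ absorbs the counting cost.
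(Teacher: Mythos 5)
Your proof is correct and is essentially the paper's argument in different clothing: the paper forms the partition generated by the set family $\cal Q_{2r-1}\cup\{U(A):A\in\cal Q_{2r-1}\}$, where $U(A)$ is the union of the parts flipped with $A$, and the cells of that partition are exactly your parts of $\cal Q$ together with your flip-pattern classes. Your count $s+2^s$ is in fact slightly sharper than the paper's $2^{2s}=4^s$, and the case analysis goes through as you describe.
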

\begin{claimproof}
    As $G_t$ is a $\cal P_t$-flip of $G$, so is $G'$.
    Therefore, for any two parts $A,B\in\cal P_t$, 
    the set $E'\cap AB$ is either equal to $E\cap AB$,
    or to $AB-E$.

    For every part $A\in\cal N$, let $U(A)$ be the union of all parts 
    $B\in\cal P_t$ such that $E'\cap AB= AB-E$.
    Then for every $B$ with $B\subset U(A)$ or $B\subset V- U(A)$,
    we have that
    $E'\cap AB=E\cap AB$ or 
     $E'\cap AB=AB-E$.

    Consider the set family:
    $$\cal F\coloneqq \cal N \cup \setof{U(A)}{A\in \cal N}.$$ Then $|\cal F|\le 2|\cal N|\le 2s$.
    
    Let $\cal Q$ be the partition 
    of $V$ such that 
    two vertices $a,b$ are in the same part of $\cal Q$ if and only if $a$ and $b$ belong to the same sets in $\cal F$.
    Then $|\cal Q|\le 2^{|\cal F|}\le 4^{s}$.
    Moreover, for every $A,B\in \cal Q$,
    either $E'\cap AB=E\cap AB$,
    or $E'\cap AB=AB-E$.
    It follows that 
    $G'$ is a $4^s$-flip of $G$.
\end{claimproof}

\begin{claim}\label{cl:balls}
    For all $w\in B^r_{G_{t-1}}(v)$
 and $i=0,\ldots,r$ we have:
    $$B^i_{G'}(w)\subset B^i_{G_t}(w).$$
\end{claim}

\begin{claimproof}
    We induct on $i$. For $i=0$ the statement is trivial.
In the inductive step, fix $1\le i\le r$ and $u\in B^i_{G'}(w)$ with $u\neq w$.
Then there is some  $a\in B^{i-1}_{G'}(w)$ with $ua\in E(G')$.
By inductive assumption, $a\in B^{i-1}_{G_t}(w)$.
From  $i\le r$ and  $E(G_t)\subset R_{t-1}$
we get that $a\in B^{r-1}_{R_{t-1}}(w)$,
which together with $w\in B^{r}_{G_{t-1}}(v)\subset B^{r}_{R_{t-1}}(v)$ implies $a\in B^{2r-1}_{R_{t-1}}(v)$. In particular, $a\in \bigcup \cal N$.
By definition of $E'$, this implies $ua\in E(G_t)$. Together with 
$a\in B^{i-1}_{G_t}(w)$, this implies that $u\in B^{i}_{G_t}(w)$, as required.
\end{claimproof}
\Cref{cl:k-flip} and \Cref{cl:balls} together prove \Cref{lem:strategy}.
\end{proof}

\subsection{Almost bounded merge-width}\label{sec:abmw}
A graph class $\CC$ has \emph{almost bounded merge-width}
if for every $r\in\N$ and $\eps>0$,
we have that $\mw_r(G)\le O_{\CC,r,\eps}(|V(G)|^\eps)$, for all $n\in\N$ and all $n$-vertex graphs $G\in\CC$.

Clearly, classes of bounded merge-width have almost bounded merge-width.
The following result states that all nowhere dense classes have almost bounded merge-width. We will prove this later below, by inspecting the proof of \Cref{thm:be}.

\thmnwd*

Classes of \emph{almost bounded flip-width} are defined analogously as classes of almost bounded merge-width, with $\mw_r(G)$ replaced with $\fw_r(G)$.
The following is the main result of this section.

\thmabmw*

As every hereditary class of almost bounded flip-width is monadically dependent \cite[Thm. 2.12]{flip-breakability}, we obtain the following.
\begin{corollary}\label{cor:abmw-mNIP}
    Every hereditary class of almost bounded merge-width is monadically dependent.
\end{corollary}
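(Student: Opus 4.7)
The plan is to obtain \Cref{cor:abmw-mNIP} as a direct syllogism chaining two already-available results: \Cref{thm:abmw} above (the merge-width $\Rightarrow$ flip-width transfer for almost bounded parameters), and the external implication from almost bounded flip-width to monadic dependence established in \cite[Thm.~2.12]{flip-breakability}, which is exactly the result the paper cites immediately before stating the corollary.

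Concretely, I would fix a hereditary graph class $\CC$ of almost bounded merge-width, and argue in two steps. First, invoke \Cref{thm:abmw} on $\CC$; its hypothesis is precisely hereditarity plus almost bounded merge-width, so the conclusion that $\CC$ has almost bounded flip-width is immediate. Second, observe that $\CC$ is still hereditary (this is an intrinsic property of $\CC$ unchanged by the passage through flip-width), so the hypothesis of \cite[Thm.~2.12]{flip-breakability} is met and $\CC$ is monadically dependent.

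I do not anticipate any genuine obstacle, since the corollary is stated precisely so as to be a one-line composition. The only sanity check worth making is that the definition of \emph{almost bounded flip-width} used in \cite{flip-breakability} is literally the one used here — namely $\fw_r(G)\le O_{\CC,r,\eps}(|V(G)|^\eps)$ for all fixed $r\in\N$ and $\eps>0$ — which is confirmed by the paragraph preceding \Cref{thm:abmw}. Thus the whole proof can be written as: ``By \Cref{thm:abmw}, $\CC$ has almost bounded flip-width; by \cite[Thm.~2.12]{flip-breakability}, $\CC$ is monadically dependent.''
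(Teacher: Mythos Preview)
Your proposal is correct and matches the paper's own argument exactly: the paper derives the corollary directly from \Cref{thm:abmw} together with \cite[Thm.~2.12]{flip-breakability}, just as you outline.
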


\Cref{thm:nwd} and \Cref{cor:abmw-mNIP} together justify our \Cref{conj:almostboundedmergewidth}
that almost bounded merge-width coincides with monadic dependence on hereditary classes.






\subsection{Preliminaries}
Before proving \Cref{thm:nwd} and \Cref{thm:abmw}, 
we first recall some basic notions.

\medskip
A \emph{set system} is a pair $(X,\cal F)$ with $\cal F\subset 2^X$.
Its \emph{VC-dimension} is the maximal size of a subset $Y\subset X$ such that 
$\setof{Y\cap F}{F\in\cal F}=2^Y$.
We recall the fundamental Sauer-Shelah-Perles lemma~\cite{sauer,shelah-sauer-lemma}.

\begin{lemma}[Sauer-Shelah-Perles lemma]\label{lem:sauer-shelah-perles}
  Let $(X,\cal F)$ be a set system of VC-dimension~$d$.
  Then $|\cal F|\le O(|X|^d)$.
\end{lemma}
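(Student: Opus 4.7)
The plan is to prove the sharper bound $|\cal F| \le \sum_{i=0}^{d}\binom{|X|}{i}$, which implies $|\cal F| = O(|X|^d)$ since each binomial coefficient is at most $|X|^d/i!$ for $i \le d$. I would proceed by induction on $|X|$. The base case $|X| = 0$ is trivial: then $\cal F \subseteq \{\emptyset\}$ has at most one element, matching $\binom{0}{0}=1$.

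For the inductive step, fix any element $x \in X$ and set $X' := X \setminus \{x\}$. Decompose $\cal F$ into two auxiliary families on $X'$: let
\[
  \cal F_1 := \{F \setminus \{x\} \mid F \in \cal F\}
\]
be the projection of $\cal F$ onto $X'$, and let
\[
  \cal F_2 := \{F \subseteq X' \mid F \in \cal F \text{ and } F \cup \{x\} \in \cal F\}
\]
consist of those subsets of $X'$ appearing in $\cal F$ both with and without $x$. A simple double-counting argument (pairing each $F \in \cal F_2$ with $F \cup \{x\}$, and sending every other $F \in \cal F$ to its image in $\cal F_1$) yields the key identity $|\cal F| = |\cal F_1| + |\cal F_2|$.

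The crux is bounding the VC-dimensions of these two families. Clearly $\cal F_1$ has VC-dimension at most $d$ on $X'$, since any subset of $X'$ shattered by $\cal F_1$ is also shattered by $\cal F$. I claim $\cal F_2$ has VC-dimension at most $d-1$: if some $Y \subseteq X'$ were shattered by $\cal F_2$, then by the very definition of $\cal F_2$ (each witness is in $\cal F$ both with and without $x$), the strictly larger set $Y \cup \{x\} \subseteq X$ would be shattered by $\cal F$, contradicting $\VCdim(\cal F) \le d$. Applying the inductive hypothesis to both families on the smaller ground set $X'$ gives
\[
  |\cal F_1| \le \sum_{i=0}^{d}\binom{|X|-1}{i}, \qquad
  |\cal F_2| \le \sum_{i=0}^{d-1}\binom{|X|-1}{i}.
\]
Summing and applying Pascal's identity $\binom{n}{i} = \binom{n-1}{i} + \binom{n-1}{i-1}$ termwise collapses the right-hand side to $\sum_{i=0}^{d}\binom{|X|}{i}$, completing the induction.

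The only conceptually delicate step is the VC-dimension bound on $\cal F_2$; the definition of $\cal F_2$ is precisely tailored so that any shattered set can be augmented by the distinguished element $x$, trading one unit of the VC bound for one unit of progress in the induction. Everything else is routine bookkeeping.
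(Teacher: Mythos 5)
Your proof is correct, and it is the standard double-induction (on $|X|$ and $d$) proof of the Sauer--Shelah--Perles lemma via the decomposition $|\cal F|=|\cal F_1|+|\cal F_2|$ with $\VCdim(\cal F_2)\le d-1$. The paper does not prove this statement at all -- it is quoted as a classical result with citations -- so there is nothing to compare against; your argument, including the sharper bound $|\cal F|\le\sum_{i=0}^{d}\binom{|X|}{i}$ and the Pascal-identity collapse, is complete and standard.
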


The \emph{VC-dimension} of a graph $G$, denoted $\VCdim(G)$, 
is defined as the VC-dimension of the set system $\bigl(V(G),\setof{N(v)}{v\in V(G)}\bigr)$.
More explicitly, $\VCdim(G)$ is the maximal size of a subset $X\subset V(G)$
such that $\setof{N(v)\cap X}{v\in V(G)}=2^X$.

Define the \emph{neighborhood complexity} of a graph $G$, as the function $\pi_G\from\N\to\N$ defined as
 $$\pi_G(s)\coloneqq\max\Bigl\{\bigl|\{N_G(v)\cap S\,:\,v\in V(G)\}\bigr|\,:\, S\subset V(G), |S|\le s\Bigr\} \quad\text{for all }s \in \N.$$
In particular, $\pi_G(s)\le 2^s$, for all graphs $G$ and all $s\in\N$.
From \Cref{lem:sauer-shelah-perles}, we get the following.
\begin{corollary}\label{cor:sauer-shelah}
    Let $G$ be a graph of VC-dimension $d$.
    Then $\pi_G(s)\le O(s^d)$, for all $s\in \N$.
\end{corollary}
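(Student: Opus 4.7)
The plan is to derive the corollary as a direct application of the Sauer--Shelah--Perles lemma (\Cref{lem:sauer-shelah-perles}). Fix $s \in \N$ and a subset $S \subseteq V(G)$ with $|S| \le s$ achieving the maximum in the definition of $\pi_G(s)$. Consider the set system $(S, \cal F)$ defined by
\[
  \cal F \coloneqq \bigl\{ N_G(v) \cap S : v \in V(G) - S \bigr\}.
\]
The key step will be to observe that $(S, \cal F)$ has VC-dimension at most $d$. Indeed, if some $Y \subseteq S$ is shattered by $\cal F$, then for every $Z \subseteq Y$ there is a vertex $v \in V(G) - S$ with $N_G(v) \cap Y = Z$, which witnesses that $Y$ is also shattered by $\{N_G(v) \cap Y : v \in V(G)\}$, so $|Y| \le \VCdim(G) = d$.

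Once this is in place, \Cref{lem:sauer-shelah-perles} yields $|\cal F| \le O(|S|^d) \le O(s^d)$. Combining with the trivial bound $|S| \le s$, we conclude that
\[
  \pi_G(s) = |S| + |\cal F| \le s + O(s^d) \le O(s^d),
\]
where the last inequality handles the $d=0$ case separately (in which $\cal F$ has at most one element and $G$ is essentially trivial, so $\pi_G(s) \le s+1$, and we absorb the constant into the $O$-notation accordingly). I do not anticipate a significant obstacle; the only subtlety is the restriction $v \in V(G) - S$ in the definition of $\pi_G$, which is harmless since restricting the range of $v$ can only shrink $\cal F$ and hence only lower the VC-dimension of $(S, \cal F)$ relative to that of $G$.
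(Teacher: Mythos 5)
Your proof is correct and is exactly the routine verification the paper leaves implicit (the paper derives the corollary directly from \Cref{lem:sauer-shelah-perles} with no further argument): restrict the neighborhood set system to $S$, observe that the VC-dimension cannot increase, and apply the Sauer--Shelah--Perles bound, with the additive $|S|\le s$ absorbed for $d\ge 1$. The only wrinkle is the degenerate case $d=0$, where $\pi_G(s)$ can equal $s+1$ and hence is genuinely not $O(s^0)$; this is a quirk of the statement itself (which is only ever used for $d\ge 1$) rather than a gap in your argument, so your parenthetical attempt to ``absorb'' it should simply be replaced by the remark that one may assume $d\ge 1$.
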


\subsection{Proof of \Cref{thm:nwd}}

To prove \Cref{thm:nwd}, we use the following result.
\begin{fact}[\cite{zhu2009colouring,sparsity-book}]\label{fact:nd-wcol}
    Let $\CC$ be a nowhere dense graph class, and fix $r\in\N$ and $\eps>0$.
    Then for every graph $G\in\CC$
    $$\wcol_r(G)\le O_{\CC,r,\eps}(|V(G)|^{\eps}).$$
\end{fact}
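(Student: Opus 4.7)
The plan is to derive the bound on $\wcol_r$ via the intermediate parameter of $r$-admissibility $\adm_r$, and then bound $\adm_r$ using the shallow minor characterization of nowhere dense classes. Recall that for a linear order $\le$ on $V(G)$, the \emph{$r$-admissibility} $\adm_r(G,\le,v)$ of a vertex $v$ is the maximum number of paths of length at most $r$ from $v$ to $\le$-smaller vertices that are pairwise internally vertex-disjoint and whose internal vertices are all $\ge v$; then $\adm_r(G) \coloneqq \min_{\le}\max_v \adm_r(G,\le,v)$.

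The first step is the classical inequality of Kierstead--Yang (in the form refined by Dvořák and by Grohe--Kreutzer--Siebertz):
\[
\wcol_r(G) \;\le\; \adm_r(G)^{\,r}.
\]
The proof is by a path-threading argument: given an order $\le$ realizing $\adm_r(G)$, one shows that for every vertex $v$, the set $\wreach_r(G,\le,v)$ can be ``covered'' by iteratively composing at most $r$ layers of admissibility-certifying internally disjoint paths, each layer multiplying the count by at most $\adm_r(G)$.

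The second step is to bound $\adm_r(G)$ for $G$ in a nowhere dense class $\CC$. I will invoke the central structural characterization of nowhere denseness due to \Nesetril and Ossona de Mendez: a class $\CC$ is nowhere dense if and only if for every $r\in\N$ and $\eps>0$, the top grad $\nabla_r$ (and equivalently the topological top grad $\widetilde\nabla_r$) satisfies
\[
\widetilde\nabla_r(G) \;\le\; O_{\CC,r,\eps}(|V(G)|^{\eps}) \qquad\text{for all } G\in\CC.
\]
Given this, a standard greedy-elimination argument (going back to Dvořák's work on admissibility) converts a bound on $\widetilde\nabla_r$ into a bound on $\adm_r$: iteratively remove a vertex of minimum ``$r$-back-degree'' with respect to the remaining graph; if at some stage every vertex had back-degree larger than $C\cdot\widetilde\nabla_r(G)$, a Menger/flow argument would extract an $r$-shallow topological minor of density exceeding $\widetilde\nabla_r(G)$, a contradiction. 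This yields $\adm_r(G) \le O(\widetilde\nabla_r(G))$.

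Combining the two steps, for every $\eps>0$ we pick $\eps'=\eps/r$ and obtain
\[
\wcol_r(G) \;\le\; \adm_r(G)^{\,r} \;\le\; \big(O_{\CC,r,\eps'}(|V(G)|^{\eps'})\big)^{r} \;=\; O_{\CC,r,\eps}(|V(G)|^{\eps}),
\]
as desired. The main obstacle, and the deepest ingredient, is the shallow topological minor density bound for nowhere dense classes; its proof rests on the dichotomy that either every graph arises as an $r$-shallow minor (dense case) or densities of shallow minors grow subpolynomially (sparse case), and is the central combinatorial pillar of Sparsity theory. Once that is granted, the remaining steps---the admissibility-from-density argument and the weak-coloring-from-admissibility argument---are standard greedy and path-threading calculations that do not require any additional structural input.
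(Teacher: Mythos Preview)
The paper does not prove this statement at all: it is labelled a \emph{Fact} and attributed to \cite{zhu2009colouring} and the Sparsity book, with no proof given. So there is no ``paper's own proof'' to compare against; the paper simply imports the result from the literature.

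Your sketch is a faithful outline of how this result is established in the cited sources. The route via $r$-admissibility---bounding $\wcol_r$ polynomially in $\adm_r$, then bounding $\adm_r$ by shallow (topological) minor densities via a greedy elimination / Menger argument, and finally invoking the \Nesetril--Ossona de Mendez density trichotomy for nowhere dense classes---is exactly the standard derivation. Two minor caveats: the precise inequality $\wcol_r(G)\le\adm_r(G)^r$ and the linear bound $\adm_r(G)\le O(\widetilde\nabla_r(G))$ are slightly loose as stated (the literature gives polynomial bounds with possibly different exponents and a shift in the radius parameter), but since any polynomial relationship suffices for the $|V(G)|^{\eps}$ conclusion after rescaling $\eps$, this does not affect the argument. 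Your identification of the density bound for shallow minors as the deep ingredient is correct.
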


\begin{lemma}\label{lem:nd-VC}
    Let $\CC$ be a nowhere dense graph class. 
    Then there is some $d\in \N$ such that all graphs $G\in \CC$ have VC-dimension less than $d$.
\end{lemma}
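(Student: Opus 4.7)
The plan is to combine two standard structural facts about nowhere dense classes: first, every nowhere dense class excludes some biclique $K_{t,t}$ as a subgraph, and second, the class of $K_{t,t}$-subgraph-free graphs has VC-dimension bounded by a function of~$t$. The conclusion of the lemma is then immediate with $d \coloneqq d_0(t(\CC))$.

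For the first fact, I would show that $K_{t,t}$ contains $K_t$ as a depth-$1$ shallow minor. Writing $\{u_1,\ldots,u_t\}$ and $\{v_1,\ldots,v_t\}$ for the two sides, pair each $u_i$ with $v_i$: each branch set $\{u_i,v_i\}$ has radius~$1$ (it consists of a single edge), and between any two pairs both crossing edges $u_i v_j$ and $u_j v_i$ survive, so the contracted graph is $K_t$. Hence a class containing arbitrarily large bicliques as subgraphs contains arbitrarily large cliques as depth-$1$ shallow minors, contradicting nowhere denseness. This yields $t = t(\CC)$ such that no $G \in \CC$ contains $K_{t,t}$ as a subgraph.

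For the second fact, suppose $G$ has VC-dimension~$\geq d$, witnessed by vertices $v_1,\ldots,v_d$ together with distinct vertices $w_S$ for $S \subseteq [d]$ with $N(w_S) \cap \{v_1,\ldots,v_d\} = \{v_i : i \in S\}$. Restricting to the $M \coloneqq \binom{d}{\lceil d/2\rceil}$ witnesses with $|S| = \lceil d/2 \rceil$, each has exactly $\lceil d/2\rceil$ neighbors in $\{v_1,\ldots,v_d\}$. Double-counting pairs $(T,S)$ with $|T|=t$ and $T \subseteq S$ shows that some $t$-element $T \subseteq [d]$ is contained in at least $M \binom{\lceil d/2\rceil}{t}/\binom{d}{t}$ such balanced $S$'s; as $d\to\infty$ with $t$ fixed, this ratio tends to $2^{-t}$, so the bound exceeds $t$ once $d$ is large enough in terms of~$t$. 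Picking any $t$ such $S$'s together with the indices in $T$ then produces $K_{t,t}$ as a subgraph of $G$. Thus $\VCdim(G) < d_0(t)$ whenever $G$ is $K_{t,t}$-subgraph-free, completing the proof.

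The argument is conceptually routine --- both ingredients are classical, and in the second step one could alternatively invoke the K{\H o}v{\'a}ri--S{\'o}s--Tur{\'a}n theorem in place of the direct double counting. The only point requiring care is the radius of the branch sets in the first step, which is immediate since each branch set consists of a single edge.
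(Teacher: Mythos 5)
Your proof is correct in substance but takes a genuinely different, more roundabout route than the paper's. The paper's argument is two lines: since $\CC$ is nowhere dense, there is some $d$ such that no $G\in\CC$ contains the $1$-subdivision of $K_d$ as a subgraph; and a shattered set $\{v_1,\ldots,v_d\}$ together with, for each pair $\{i,j\}$, a witness $w_{ij}$ whose neighborhood trace on the shattered set is exactly $\{v_i,v_j\}$ yields precisely such a $1$-subdivision as a subgraph, so $\VCdim(G)<d$. You instead pass through biclique exclusion: nowhere dense implies $K_{t,t}$-subgraph-free, and $K_{t,t}$-freeness bounds the VC-dimension via a K\H{o}v\'ari--S\'os--Tur\'an-style double count. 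Two remarks on your intermediate steps. First, your derivation of biclique-freeness uses the shallow-minor characterization of nowhere denseness, whereas the paper defines nowhere denseness via excluded subdivisions as subgraphs; with that definition it is more direct to observe that $K_{t,t}$ contains the $1$-subdivision of $K_k$ as a subgraph whenever $t\ge\binom{k}{2}$ (use one side for the principal vertices and distinct vertices of the other side as subdivision vertices). Second, both your argument and the paper's leave the same small technicality implicit: the witnesses $w_S$ may coincide with vertices of the shattered set, so the two sides of the claimed $K_{t,t}$ (respectively, the principal and subdivision vertices of the claimed subdivision) are not automatically disjoint. This is easily repaired --- at most $d$ witnesses can lie inside the shattered set, and your counting produces far more than $t+d$ candidates once $d$ is large --- so it does not affect correctness, but it is worth a sentence. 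Overall, your route is longer but makes explicit the biclique exclusion that the paper invokes elsewhere, while the paper's is a sharper one-step reduction tailored to its own definition of nowhere denseness.
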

\begin{proof}
    As $\CC$ is nowhere dense, there is some $d\in\N$ such that 
    no graph $G\in \CC$ contains the $1$-subdivision of the clique $K_d$, as a subgraph.
    It follows that every graph $G\in \CC$ has VC-dimension less than $d$.
\end{proof}

\begin{lemma}\label{lem:wcol-poly}
    For every $r\in\N\cup\set{\infty}$ and graph $G$, we have
    $$\mw_r(G)\le 3\cdot\pi_G(\wcol_{r+1}(G)).$$
  \end{lemma}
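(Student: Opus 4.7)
The plan is to recycle the merge sequence constructed in the proof of \Cref{lem:wcol}, and simply refine its radius bound by replacing the crude estimate $2^k$ by the atomic complexity $\pi_G(k)$. If $\wcol_{r+1}(G)=\infty$, then $\pi_G(\wcol_{r+1}(G))=\infty$ and there is nothing to prove; so assume $k\coloneqq \wcol_{r+1}(G)$ is finite and fix a total order $\le$ on $V(G)$ witnessing $|\wreach_{r+1}(G,\le,v)|\le k$ for all $v$. Exactly as in \Cref{lem:wcol}, let $L_t$ be the $t$ largest vertices, $S_t\coloneqq V-L_t$, let $\cal P_t$ partition $S_t$ into singletons and group vertices of $L_t$ by their atomic type over $S_t$, and set $R_t\coloneqq\{ab\in E(G):a,b\in L_t\}$. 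The verification that $(\cal P_1,R_1),\ldots,(\cal P_n,R_n)$ is a merge sequence is identical to the one already given and can be cited verbatim.

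The only new point is the radius-$r$ width estimate. Fix $t\in[n]$ and $v\in V$. If $v\in S_t$, then $v$ is isolated in $(V,R_t)$, so it reaches only the singleton part $\{v\}$. If $v\in L_t$, then every vertex $w$ reachable from $v$ by a path of length ${\le}r$ in $(V,R_t)$ lies in $L_t$ and, by the argument from \Cref{lem:wcol}, satisfies $N_G(w)\cap S_t\subseteq X$, where $X\coloneqq \wreach_{r+1}(G,\le,v)\subseteq S_t$ has size at most $k$. Since $N_G(w)\cap S_t$ determines the part of $\cal P_t$ containing $w$ and is itself determined by $N_G(w)\cap X$, the number of reachable parts of $\cal P_t$ is at most
\[
    |\{N_G(w)\cap X:w\in V(G)\}|\le |X|+|\{N_G(w)\cap X:w\in V(G)-X\}|\le \pi_G(|X|)\le \pi_G(k),
\]
using the definition of $\pi_G$. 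Thus $(\cal P_t,R_t)$ has radius-$r$ width at most $\pi_G(k)$, and this argument works uniformly for every $r\in\N\cup\set\infty$ (for $r=\infty$, simply interpret $\wreach_\infty$ in the usual way).

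Finally, passing from $\cal P_t$ to the finer partition $\cal P_{t-1}$ costs a factor of $3$: as observed in the proof of \Cref{lem:wcol}, each part of $\cal P_t$ is the union of at most three parts of $\cal P_{t-1}$, since the atomic type over $S_{t-1}=S_t\cup\{v\}$ refines that over $S_t$ by at most the three possible atomic types over $\{v\}$. Therefore the radius-$r$ width of $(\cal P_{t-1},R_t)$ is at most $3\pi_G(k)$, yielding $\mw_r(G)\le 3\pi_G(\wcol_{r+1}(G))$. I do not anticipate any real obstacle here: the combinatorial work was already done in \Cref{lem:wcol}; the only change is a bookkeeping refinement at the step that previously used $2^{|X|}$ in place of the sharper $\pi_G(|X|)$.
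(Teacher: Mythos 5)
Your proof is correct and follows exactly the paper's argument: reuse the merge sequence from \Cref{lem:wcol} verbatim and replace the $2^{k}$ count of reachable parts by the sharper bound $\pi_G(k)$ on the number of distinct traces $N_G(w)\cap X$, keeping the factor $3$ from the refinement $\cal P_{t-1}\to\cal P_t$. One trivial slip: $X=\wreach_{r+1}(G,\le,v)$ is not contained in $S_t$ (it contains $v\in L_t$ itself), but this is harmless since the only fact you use is $N_G(w)\cap S_t\subseteq X$, so $N_G(w)\cap S_t$ is still determined by $N_G(w)\cap X$.
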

  \begin{proof}[Proof sketch]
    We follow the proof of \Cref{lem:wcol}, and use the same notation. 
    Let \(k\coloneqq\wcol_{r+1}(G)\).
    Fix $t\in [n]$, and let $L_t,S_t,\cal P_t,R_t,\le$ be as defined in the proof.
    Fix $v\in L_t$.
    It was observed that for every $w\in V(G)$ which is reachable from $v$ by a path of length $r$
in the graph $(V,R_t)$, 
 the atomic type of $w$ over $S_t$ is uniquely 
determined by $N_G(w)\cap S_t \subseteq \wreach_{r+1}(G,\le,v)$,
and moreover, $|\wreach_{r+1}(G,\le,v)|\le k$.
Since $$\Big|\setof{N_G(w)\cap \wreach_{r+1}(G,\le,v)}{w\in V(G)}\Big|\le  \pi_G(k),$$
we conclude that are at most $\pi_G(k)$ parts of $\cal P_t$ 
that are reachable by a path of length at most $r+1$ from $v$.
Thus, the radius-$r$ width of $(\cal P_t,R_t)$ is at most
$\pi_G(k)$. The rest of the argument remains unchanged, yielding the final bound $\mw_r(G)\le 3\cdot \pi_G(k)\le 3\cdot \pi_G(\wcol_{r+1}(G))$.
  \end{proof}
  
  \Cref{thm:nwd} now follows by combining the previous insights.
  \begin{proof}[Proof of \Cref{thm:nwd}]
    By \Cref{lem:nd-VC}, there is some $d\in\N$ such that every $G\in\CC$ has VC-dimension at most $d$.
    Fix $r\in\N$ and $\eps>0$.
Then, for every graph $G\in\CC$, by \Cref{lem:wcol-poly} and \Cref{fact:nd-wcol}, we have 
$$\mw_r(G)\le 3\cdot \pi_G\bigl(\wcol_{r+1}(G)\bigr)\le  3\cdot \pi_G\bigl(O_{\CC,r,\eps}(|V(G)|^\eps)\bigr)\le O_{\CC,r,\eps}\bigl(|V(G)|^{\eps\cdot d}\bigr).$$
Since $\eps>0$ is arbitrary, this implies that $\CC$ has almost bounded merge-width.
  \end{proof}

  \subsection{Proof of \Cref{thm:abmw}}

We start by proving that hereditary classes of almost bounded merge-width have bounded VC-dimension. 
In fact, we prove a stronger result, expressed using the following notion.

\newcommand{\ntn}{\mathrm{ntn}}

A pair $u,v$ of distinct vertices in a graph $G$ is a pair of \emph{$k$-near-twins} if $|N_G(u)\triangle N_G(v)|\le k$.
 For a bipartite graph $G=(X,Y,E)$, let the near-twin number $\ntn(G)$ denote 
 the smallest number $k$ such that 
for all $X'\subset X,Y'\subset Y$ with $|X'|+|Y'|>2$,
the bipartite subgraph $G[X',Y']$ induced by $X'$ and $Y'$  in $G$
has a pair of $k$-near-twins contained either in $X'$, or in $Y'$.

\begin{lemma}\label{lem:twins-bip}
    Let $G=(X,Y,E)$ be a bipartite graph with $\mw_1(G)\le k$
    and with $|X|+|Y|>2$.
    Then $G$ contains a pair of $2k$-near-twins
     contained in a single part of $G$. In particular, $\ntn(G)\le 2k$.
\end{lemma}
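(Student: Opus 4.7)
The plan is to work with a construction sequence (equivalently, a merge sequence via \Cref{lem:fmw}) of $G$ of radius-$1$ width at most $k$, and to locate the first step $t^*$ at which the partition $\cal P_{t^*}$ contains a part with two vertices from the same side of the bipartition. Such a step exists: since $|X|+|Y|>2$ we have, WLOG, $|X|\ge 2$, and the final partition $\{V\}$ then contains two $X$-vertices in the same part. By minimality of $t^*$, every part of $\cal P_{t^*-1}$ contains at most one vertex from each side, hence has size at most two. The step $t^*$ must therefore merge two distinct parts $A_1, A_2 \in \cal P_{t^*-1}$ such that $A\coloneqq A_1\cup A_2$ contains two same-side vertices; WLOG pick $u \in A_1 \cap X$ and $v \in A_2 \cap X$.

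I will show that $u$ and $v$ form a pair of $2k$-near-twins in $X$. Since $G$ is bipartite and $u,v\in X$, we have $N_G(u)\triangle N_G(v)\subseteq Y$. For any $c$ in this symmetric difference, the pairs $\{u,c\}$ and $\{v,c\}$ have distinct edge status in $G$, so they cannot both be unresolved: they either both lie within $A$ (if $c \in A$) or both lie between $A$ and the part $C\in \cal P_{t^*}$ containing $c$ (if $c\notin A$). In either case, the homogeneity condition built into the definition of a merge sequence, applied to the pair $(A,A)$ or $(A,C)$, forces at least one of the two pairs to be in $R_{t^*}$. Therefore $N_G(u)\triangle N_G(v) \subseteq (S_u\cup S_v)\cap Y$, where $S_u\coloneqq\{c:\{u,c\}\in R_{t^*}\}$ and $S_v$ is defined analogously.

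To conclude I will bound $|S_u\cap Y|\le k$ using the radius-$1$ width hypothesis: the radius-$1$ width of $(\cal P_{t^*-1}, R_{t^*})$ is at most $k$, so at most $k$ parts of $\cal P_{t^*-1}$ are reachable from $u$ via a single edge in $(V, R_{t^*})$, and $S_u$ is contained in the union of these parts; since by minimality of $t^*$ each part of $\cal P_{t^*-1}$ contains at most one $Y$-vertex, the bound follows, and symmetrically $|S_v\cap Y|\le k$, yielding $|N_G(u)\triangle N_G(v)|\le 2k$. The key technical point is the minimality of $t^*$, which simultaneously keeps the parts of $\cal P_{t^*-1}$ small enough to convert the parts-based radius-$1$ bound into a vertex-based bound on $|S_u\cap Y|$, and guarantees that $A$ arises from exactly two parts each containing at most one $X$-vertex; the homogeneity-based case analysis is then short and uniform in $c\in A$ vs.\ $c\notin A$.
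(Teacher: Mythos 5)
Your proof is correct and follows essentially the same route as the paper's: locate the first step at which a part contains two vertices from the same side, use the homogeneity condition of the merge sequence to show the symmetric difference of their neighborhoods is covered by their resolved neighborhoods, and bound those via the radius-$1$ width. Your version is in fact slightly more careful than the paper's sketch, since by measuring reachability against $\cal P_{t^*-1}$ (whose parts still contain at most one $Y$-vertex each) you avoid any worry about the freshly merged part containing two $Y$-vertices.
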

\begin{proof}
    Fix a construction sequence of $G$ of radius-$1$ width $k$. Consider the first moment when some part $A$ of the current partition $\cal P$ contains two vertices $a,b$ that belong to the same part of the bipartition $\set{X,Y}$ of $G$. Suppose $a,b\in X$. Then $N_G(a)\triangle N_G(b)\subset (N_R(a)\cup N_R(b))\cap Y$,
    where $N_R(\cdot)$ denotes the neighborhood in the graph $(V,R)$  of currently resolved pairs.
    Note that $|N_R(a)\cap Y|\le k$,
    since no two vertices of $Y$ are in a single part of the current partition $\cal P$, and $N_R(a)$ is contained in at most $k$ parts of $\cal P$. Similarly, $|N_R(b)\cap Y|\le k$.
    It follows that $|N_G(a)\triangle N_G(b)|\le 2k$.
\end{proof}

The following lemma is implicit in \cite{flip-width}
(it follows from 
\cite[Lem. 5.25]{flip-width-arxiv}).
\begin{lemma}\label{lem:ntn}
    If $\CC$ is a hereditary class of bipartite graphs such that $\ntn(G)\le o(|V(G)|)$ for $G \in \CC$, then $\VCdim(\CC)<\infty$.
\end{lemma}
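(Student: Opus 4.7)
The plan is to argue by contrapositive: assuming $\VCdim(\CC)=\infty$, I will exhibit, for arbitrarily large $d$, a bipartite graph $H\in\CC$ on $O(d)$ vertices with $\ntn(H)=\Omega(|V(H)|)$, contradicting the hypothesis.

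Fix $d$ and pick $G=(X,Y,E)\in\CC$ of VC-dimension at least $d$. In a bipartite graph, any shattered set $T$ must lie entirely on one side: otherwise taking $S=T$ itself would require a vertex adjacent to all of $T$, but no vertex is adjacent to elements on both sides of the bipartition. So by symmetry I may assume there is $X'\subseteq X$ with $|X'|=d$ such that for every $S\subseteq X'$ there is a witness $y_S\in Y$ with $N_G(y_S)\cap X'=S$. The key combinatorial step is to select a subfamily $S_1,\ldots,S_m\subseteq X'$ of size $m=d$ with the following twin-separation properties:
\begin{enumerate}
\item $|S_i\triangle S_j|\ge d/3$ for all $i\ne j$;
\item for every pair $x\ne x'$ in $X'$, the number of indices $i$ for which $|S_i\cap\{x,x'\}|=1$ is at least $m/3$.
\end{enumerate}

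This is provided by the probabilistic method: let each $S_i$ be an independent, uniformly random subset of $X'$. Then $|S_i\triangle S_j|\sim\mathrm{Bin}(d,1/2)$, so (1) fails for a fixed pair with probability at most $e^{-cd}$ by Chernoff; the counter in (2) is distributed as $\mathrm{Bin}(m,1/2)$, so (2) fails for a fixed pair with probability at most $e^{-cm}$. Summing over the $\binom{m}{2}+\binom{d}{2}=O(d^2)$ relevant pairs gives positive success probability for $m=d$ and $d$ sufficiently large.

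With such a family in hand, set $H\coloneqq G[X'\cup\{y_{S_1},\ldots,y_{S_m}\}]$, which lies in $\CC$ by heredity. Condition (1) gives $|N_H(y_{S_i})\triangle N_H(y_{S_j})|=|S_i\triangle S_j|\ge d/3$ for $i\ne j$, and condition (2) gives $|N_H(x)\triangle N_H(x')|\ge m/3$ for $x\ne x'$ in $X'$. Hence no two vertices of $H$ on the same side of the bipartition are $k$-near-twins for $k<\lfloor d/3\rfloor$, and taking $H$ itself as the witness induced subgraph in the definition of $\ntn$ yields $\ntn(H)\ge\lfloor d/3\rfloor=\Omega(|V(H)|)$. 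Since $d$ is arbitrary, this contradicts $\ntn(G)=o(|V(G)|)$ on $\CC$, completing the argument. The only real obstacle is the probabilistic construction itself, requiring that the two separation conditions hold for the \emph{same} family $\{S_i\}$; the asymmetric concentration bounds used for (1) and (2) comfortably support the choice $m=d$, and a deterministic substitute such as a Hadamard code would also work.
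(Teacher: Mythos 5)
Your argument is correct, and it is worth noting that the paper itself does not prove this lemma at all: it is stated as ``implicit'' in the flip-width paper, with a pointer to Lemma~5.25 of its arXiv version, so there is no internal proof to compare against. Your self-contained contrapositive argument is therefore a genuinely independent route, and a reasonable one. The skeleton is sound: a shattered set of size $d\ge 2$ in a bipartite graph must lie on one side (a witness for the full trace would otherwise need neighbours on both sides); the probabilistic (or Hadamard-code) selection of $d$ traces $S_1,\dots,S_d$ so that both the rows ($|S_i\triangle S_j|\ge d/3$) and the columns (each pair $x,x'$ is separated by at least $d/3$ of the $S_i$) are far apart succeeds by Chernoff plus a union bound over $O(d^2)$ pairs; and the resulting $2d$-vertex induced subgraph $H$ has every same-side pair at symmetric difference at least $d/3$, so taking the full bipartition as the witness sets in the definition of $\ntn$ gives $\ntn(H)\ge d/3=\Omega(|V(H)|)$, contradicting the uniform sublinear bound on $\CC$. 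One small point you should patch: the witness $y_S$ of a trace $S$ is forced to lie in $Y$ only when $S\neq\emptyset$; the witness of the empty trace could a priori sit in $X$, which would perturb the bipartition of $H$ and hence the meaning of ``contained in a single side.'' This is harmless --- condition (1) already forces at most one $S_i$ to be empty, and you can either discard that index or add the event $S_i=\emptyset$ (probability $2^{-d}$ each) to the union bound --- but it should be said explicitly. With that one-line fix the proof is complete.
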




\begin{corollary}\label{lem:abmw-vc}
    If $\CC$ is a hereditary class of graphs such that $\mw_1(G)\le o(|V(G)|)$ for $G \in \CC$, then $\VCdim(\CC)<\infty$.
\end{corollary}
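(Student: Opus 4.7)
The plan is to reduce to the bipartite case, where the combination of \Cref{lem:twins-bip} and \Cref{lem:ntn} already gives the desired bound on $\VCdim$. For a graph $G$, let $\wh G$ denote the \emph{bipartite double cover}: the bipartite graph on $V(G)\times\{0,1\}$ with an edge between $(u,0)$ and $(v,1)$ iff $uv\in E(G)$. A preliminary observation I would verify is that $\wh G$ shatters $X\times\{0\}$ whenever $G$ shatters $X$: if $v_A\in V(G)$ realizes $N_G(v_A)\cap X=A$ for each $A\subset X$, then $(v_A,1)$ realizes $N_{\wh G}((v_A,1))\cap (X\times\{0\})=A\times\{0\}$. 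In particular, $\VCdim(\wh G)\ge \VCdim(G)$ for every graph $G$.

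Next, I would show that $\mw_1(\wh G)\le 1+\mw_1(G)$ for every graph $G$, by lifting a construction sequence of $G$ to one of $\wh G$ in the obvious two-copy fashion: each merge or resolve of $G$ is mirrored on both sides, where a resolve of parts $A,B$ in $G$ translates into resolves of the bipartite pairs $(A_0,B_1)$ and $(A_1,B_0)$ in $\wh G$; the within-side pairs, which are all non-edges, are handled with two final negative resolves once each side has collapsed to a single part, followed by one last merge. A vertex $(v,i)$ reaches at most its own part $A_i$ together with one part $B_{1-i}$ for each part $B$ that $v$ reaches through the resolved graph in $G$, giving the bound.

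With these ingredients, let $\DD$ be the class of all induced subgraphs of $\wh G$ for $G\in\CC$. This is hereditary and consists of bipartite graphs. For $H\in\DD$ on vertex set $S\subset V(\wh G)$, projecting $S$ onto $V(G)$ yields a set $\pi(S)\subset V(G)$ of size at most $|V(H)|$ with $H$ an induced subgraph of $\wh{G[\pi(S)]}$, and hereditariness of $\CC$ places $G[\pi(S)]$ in $\CC$. Combining gives
$$
\mw_1(H)\le 1+\mw_1(G[\pi(S)])\le 1+o(|\pi(S)|)=o(|V(H)|),
$$
after passing to the monotone envelope of the function bounding $\mw_1$ over $\CC$. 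Then \Cref{lem:twins-bip} provides $\ntn(H)\le 2\mw_1(H)=o(|V(H)|)$ for all $H\in\DD$ with $|V(H)|>2$, and \Cref{lem:ntn} yields $\VCdim(\DD)<\infty$. Since $\wh G\in\DD$ and $\VCdim(\wh G)\ge\VCdim(G)$ for every $G\in\CC$, we conclude $\VCdim(\CC)<\infty$. The main technical point, which I expect to be the only nontrivial step to nail down carefully, is the last display above: it is not automatic that a sublinear bound on $\mw_1$ for the full doubles $\wh G$ transfers to all their induced subgraphs $H$, and the projection argument providing a witness $G[\pi(S)]\in\CC$ of size at most $|V(H)|$ is precisely what bridges this gap.
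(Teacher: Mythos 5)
Your proof is correct and follows essentially the same route as the paper's: pass to the bipartite double cover $B(G)$, observe that a construction sequence of $G$ lifts to one of $B(G)$ with only an additive loss in radius-$1$ width and that $\VCdim(B(G))\ge\VCdim(G)$, then combine \Cref{lem:twins-bip} with \Cref{lem:ntn}. The one place you go beyond the paper's (terser) argument is the hereditariness step: since \Cref{lem:ntn} is stated for \emph{hereditary} classes, your passage to the induced-subgraph closure $\DD$ together with the projection $\pi(S)$ and the hereditariness of $\CC$ is exactly the right way to justify the sublinear $\ntn$ bound on all of $\DD$ (rather than only on the covers $B(G)$ themselves), a point the paper's proof leaves implicit; just make sure, when lifting the construction sequence, to resolve the diagonal non-edges $(v,0)(v,1)$ separately (e.g.\ negatively at the start), since mirroring a positive self-resolve of a part of $G$ would otherwise misclassify them.
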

    \begin{proof}
For a graph $G$ define the bipartite graph $B(G)$, with two parts of size $V(G)$,
representing the binary relation $E(G)\subset V(G)\times V(G)$.
Then $\mw_1(B(G))\le O (\mw_1(G))$, as a construction sequence of $G$ can
be easily converted to a construction sequence for $B(G)$.
In particular, $\mw_1(B(G))\le o(|V(G)|)$ for $G\in \CC$.
Moreover, $\VCdim(G)=\VCdim(B(G))$.
The conclusion follows from \Cref{lem:twins-bip}
and \Cref{lem:ntn}, applied to the class $\setof{B(G)}{G\in\CC}$.
    \end{proof}

    Recall that by the Sauer-Shelah-Perles Lemma (see \Cref{lem:sauer-shelah-perles}), 
    if $G$ is a graph of VC-dimension at most $d$,
    then $\pi_G(s)\le O(s^d)$ for all $s\in\N$. The following variant of \Cref{lem:fw}
     therefore gives -- for graphs of fixed VC-dimension -- a polynomial bound on the flip-width parameters, in terms of the merge-width parameters.

    \begin{lemma}\label{lem:fw-poly}Fix $r\ge 1$.
        For every graph $G$, if 
        $s=\mw_{2r-1}(G)$ then        
        $$\fw_r(G)\le s^2+\pi_G(s).$$
    \end{lemma}
\Cref{lem:fw-poly} strengthens 
 \Cref{lem:fw} by replacing the upper bound $4^s$ by $s^2+\pi_G(s)$.
    Then \Cref{lem:fw-poly} follows from the lemma below, 
    exactly in the same way as \Cref{lem:fw} follows from \Cref{lem:strategy}.

\begin{lemma}\label{lem:strategy-poly}
Fix \(r\ge 1\) and \(t\in[2,n]\), and let \(s\) be the radius-\((2r-1)\)
width of \((\cal P_t,R_{t-1})\). For every \(v\in V(G)\), there is an
\((s^2+\pi_G(s))\)-flip \(G_t'\) of \(G\) such that
\[
   B^r_{G_t'}(w)\subseteq B^r_{G_t}(w)
   \qquad\text{for every }w\in B^r_{G_{t-1}}(v).
\]
\end{lemma}

\begin{proof}
    Denote $V\coloneqq V(G)$ and 
fix \(v\in V\). Let $\cal N$ 
be the set of parts of \(\cal P_t\) which contain a vertex at distance at most
\(2r-1\) from \(v\) in the graph \((V,R_{t-1})\). 
By the assumption on the radius-\((2r-1)\) width of
\((\cal P_t,R_{t-1})\), we have
\[
   |\cal N|\le s.
\]

Define a graph \(G'=(V,E')\) as follows. For every two parts
\(A,B\in\cal P_t\), set
\[
E'\cap AB =
\begin{cases}
E(G)\cap AB, & \text{if } A,B\notin \cal N,\\
E(G_t)\cap AB, & \text{otherwise.}
\end{cases}
\tag{1}\label{eq:poly-Gprime-def}
\]
Note that $G'$ is exactly the same graph as constructed in the proof of \Cref{lem:strategy}. In particular, \Cref{cl:balls} implies that 
$$B^r_{G'}(w)\subset B^r_{G_t}(w)\quad\text{for every $w\in B^r_{G_{t-1}}(v)$.}$$
It therefore remains to show that \(G'\) is an \((s^2+\pi_G(s))\)-flip of \(G\).

For every \(A\in\cal N\), choose a vertex
\[
   a_A\in A\cap B^{2r-1}_{(V,R_{t-1})}(v),
\]
and put
\[
   S\coloneqq \{a_A:A\in\cal N\}.
\]
Then \(|S|=|\cal N|\le s\).

For \(A\in\cal N\), define
\[
   \mathcal X_A
   \coloneqq
   \{A\}\cup
   \{D\in\cal P_t:N_{G_t}(a_A)\cap D\neq\emptyset\}.
\]
Since \(E(G_t)\subseteq R_t\subseteq R_{t-1}\), every part in
\(\mathcal X_A\) intersects the radius-\(1\) ball of \(a_A\) in
\((V,R_{t-1})\). Since \(1\le 2r-1\), the radius-\((2r-1)\) width gives
\[
   |\mathcal X_A|\le s.
\]
Let
\[
   \mathcal X\coloneqq \bigcup_{A\in\cal N}\mathcal X_A.
\]
Then
\[
   \cal N\subseteq \mathcal X
   \qquad\text{and}\qquad
   |\mathcal X|\le |\cal N|\cdot s\le s^2.
\]

Let
\[
   U\coloneqq \bigcup \mathcal X.
\]
Let \(\mathcal R\) be the partition of \(V\setminus U\) in which two
vertices \(y,z\in V\setminus U\) are equivalent iff
\[
   N_G(y)\cap S=N_G(z)\cap S.
\]
Finally define
\[
   \mathcal Q\coloneqq \mathcal X\cup\mathcal R.
\]
Then \(\mathcal Q\) is a
partition of \(V\). Moreover,
\[
   |\mathcal Q|
   =
   |\mathcal X|+|\mathcal R|
   \le
   s^2+
   \bigl|\{N_G(y)\cap S:y\in V\}\bigr|
   \le
   s^2+\pi_G(s).
\tag{2}\label{eq:poly-Q-size}
\]

We claim that \(G'\) is a \(\mathcal Q\)-flip of \(G\). It is enough to show
that for every \(A,B\in\mathcal Q\),
\[
   E'\cap AB=E(G)\cap AB
   \qquad\text{or}\qquad
   E'\cap AB=AB\setminus E(G).
\tag{3}\label{eq:poly-flip-check}
\]

First suppose \(A,B\in\mathcal X\). Then \(A\) and \(B\) are parts of
\(\cal P_t\). By definition of \(G'\), the graph \(G'\) agrees on \(AB\) either
with \(G\) or with \(G_t\). Since \(G_t\) is a \(\cal P_t\)-flip of \(G\),
\eqref{eq:poly-flip-check} follows.

Next suppose \(A,B\in\mathcal R\). Every \(\cal P_t\)-part meeting \(A\cup B\)
lies outside \(\mathcal X\), hence outside \(\cal N\). Therefore
\eqref{eq:poly-Gprime-def} gives
\[
   E'\cap AB=E(G)\cap AB.
\]

Now suppose \(A\in\mathcal X\setminus\cal N\) and \(B\in\mathcal R\).
Again, every \(\cal P_t\)-part meeting \(B\) lies outside \(\mathcal X\), hence
outside \(\cal N\), and \(A\notin\cal N\). Therefore
\[
   E'\cap AB=E(G)\cap AB.
\]

It remains to consider the case \(A\in\cal N\) and \(B\in\mathcal R\).
Let \(D\in\cal P_t\) be a part with \(D\cap B\neq\emptyset\). Since
\(B\cap U=\emptyset\), we have \(D\notin\mathcal X\). In particular,
\(D\notin\mathcal X_A\), and therefore
\[
   N_{G_t}(a_A)\cap D=\emptyset.
\tag{4}\label{eq:no-Gt-neighbor}
\]
Since \(G_t\) is a \(\cal P_t\)-flip of \(G\), on the pair \(AD\) either
\(G_t\) agrees with \(G\), or \(G_t\) agrees with the complement $\overline G$ of \(G\). By
\eqref{eq:no-Gt-neighbor}, these two alternatives are distinguished by the
\(G\)-adjacency of vertices of \(D\) to \(a_A\):
\[
\begin{array}{ll}
G_t \text{ agrees with } G \text{ on } AD
   &\Longleftrightarrow
   a_A \text{ is anti-complete to } D \text{ in } G,\\[1mm]
G_t \text{ agrees with } \overline G \text{ on } AD
   &\Longleftrightarrow
   a_A \text{ is complete to } D \text{ in } G.
\end{array}
\]
But \(B\) is a part of \(\mathcal R\), so all vertices of \(B\) have the same
\(G\)-adjacency to \(a_A\), because \(a_A\in S\). Hence the same alternative,
agreement with \(G\) or agreement with \(\overline G\), occurs for every
\(\cal P_t\)-part \(D\) meeting \(B\). Since \(A\in\cal N\), we have
\[
   E'\cap AD=E(G_t)\cap AD
\]
for every such \(D\). Therefore \(E'\cap AB\) is either \(E(G)\cap AB\) or
\(AB\setminus E(G)\). This proves \eqref{eq:poly-flip-check}.

Thus \(G'\) is a \(\mathcal Q\)-flip of \(G\). By
\eqref{eq:poly-Q-size}, \(G'\) is an \((s^2+\pi_G(s))\)-flip of \(G\). Taking
\(G_t'\coloneqq G'\) completes the proof.
\end{proof}

   As mentioned, \Cref{lem:strategy-poly} implies \Cref{lem:fw-poly},
   analogously as in the proof of \Cref{lem:fw}.

\thmabmw*
\begin{proof}
    Let $\CC$ be a hereditary class of almost bounded merge-width.
    By \Cref{lem:abmw-vc}, $\CC$  has VC-dimension bounded by some $d\in\N$.
By \Cref{lem:sauer-shelah-perles}, we have that $\pi_G(s)\le O(s^d)$ for all $G\in\CC$ and $s\in\N$.

Fix $r\in\N$ and $\eps>0$. Let $\delta>0$ be sufficiently small.
Then, for every graph $G\in\CC$ with $n$ vertices and  $s=\mw_{2r-1}(G)$ 
we have $s\le O_{\CC,r,\delta}(n^\delta)$ and $\pi_G(s)\le O(s^d)$, so by \Cref{lem:fw-poly}:
$$\fw_r(G)\le s^2+\pi_G(s)\le  O(s^{d+2})\le  O_{\CC,r,\delta}(n^{(d+2)\delta}).$$
Setting $\delta=\eps/(d+2)$ gives $\fw_r(G)\le O_{\CC,r,\eps}(n^\eps)$. Thus $\CC$ has almost bounded flip-width.
\end{proof}

\printbibliography

@misc{locality-note,
      title={A Rank-Preserving Locality Theorem}, 
      author={Jan Dreier and Szymon Toruńczyk},
      year={2026},
      eprint={2606.23180},
      archivePrefix={arXiv},
      primaryClass={cs.LO},
      url={https://arxiv.org/abs/2606.23180}, 
}

@article{braunfeld2023existentialcharacterizationsmonadicnip,
      title={Existential characterizations of monadic NIP},
      author={Samuel Braunfeld and Michael C. Laskowski},
      year={2023},
      eprint={2209.05120},
	  eprinttype={arXiv},
	  journal={arXiv preprint},
      archivePrefix={arXiv}
}

@inproceedings{bonnet_et_al:LIPIcs.ICALP.2024.31,
	address = {Dagstuhl, Germany},
	annote = {Keywords: Adjacency labeling, degeneracy, monotone classes, small classes, factorial classes, implicit graph conjecture},
	author = {Bonnet, \'{E}douard and Duron, Julien and Sylvester, John and Zamaraev, Viktor and Zhukovskii, Maksim},
	booktitle = {51st International Colloquium on Automata, Languages, and Programming (ICALP 2024)},
	doi = {10.4230/LIPIcs.ICALP.2024.31},
	editor = {Bringmann, Karl and Grohe, Martin and Puppis, Gabriele and Svensson, Ola},
	isbn = {978-3-95977-322-5},
	issn = {1868-8969},
	pages = {31:1--31:20},
	publisher = {Schloss Dagstuhl -- Leibniz-Zentrum f{\"u}r Informatik},
	series = {Leibniz International Proceedings in Informatics (LIPIcs)},
	title = {{Tight Bounds on Adjacency Labels for Monotone Graph Classes}},
	url = {https://drops.dagstuhl.de/entities/document/10.4230/LIPIcs.ICALP.2024.31},
	urn = {urn:nbn:de:0030-drops-201741},
	volume = {297},
	year = {2024},
	bdsk-url-1 = {https://drops.dagstuhl.de/entities/document/10.4230/LIPIcs.ICALP.2024.31},
	bdsk-url-2 = {https://doi.org/10.4230/LIPIcs.ICALP.2024.31}}

@inproceedings{DBLP:conf/innovations/BonnetD0Z25,
	author = {{\'{E}}douard Bonnet and Julien Duron and John Sylvester and Viktor Zamaraev},
	bibsource = {dblp computer science bibliography, https://dblp.org},
	biburl = {https://dblp.org/rec/conf/innovations/BonnetD0Z25.bib},
	booktitle = {16th Innovations in Theoretical Computer Science Conference, {ITCS} 2025},
	doi = {10.4230/LIPICS.ITCS.2025.21},
	editor = {Raghu Meka},
	pages = {21:1--21:22},
	publisher = {Schloss Dagstuhl - Leibniz-Zentrum f{\"{u}}r Informatik},
	series = {LIPIcs},
	timestamp = {Tue, 11 Feb 2025 16:49:36 +0100},
	title = {Adjacency Labeling Schemes for Small Classes},
	url = {https://doi.org/10.4230/LIPIcs.ITCS.2025.21},
	volume = {325},
	year = {2025},
	bdsk-url-1 = {https://doi.org/10.4230/LIPIcs.ITCS.2025.21}}

@inproceedings{ms-mc1,
	author = {Jan Dreier and Nikolas M{\"{a}}hlmann and Sebastian Siebertz},
	bibsource = {dblp computer science bibliography, https://dblp.org},
	biburl = {https://dblp.org/rec/conf/stoc/DreierMS23.bib},
	booktitle = {Proceedings of the 55th Annual {ACM} Symposium on Theory of Computing, {STOC} 2023},
	doi = {10.1145/3564246.3585186},
	editor = {Barna Saha and Rocco A. Servedio},
	pages = {567--580},
	publisher = {{ACM}},
	timestamp = {Tue, 07 May 2024 20:08:57 +0200},
	title = {First-Order Model Checking on Structurally Sparse Graph Classes},
	url = {https://doi.org/10.1145/3564246.3585186},
	year = {2023},
	bdsk-url-1 = {https://doi.org/10.1145/3564246.3585186}}

@inproceedings{ms-mc2,
	author = {Jan Dreier and Ioannis Eleftheriadis and Nikolas M{\"{a}}hlmann and Rose McCarty and Micha{\l} Pilipczuk and Szymon Toru{\'n}czyk},
	booktitle = {65th {IEEE} Annual Symposium on Foundations of Computer Science, {FOCS} 2024},
	doi = {10.1109/FOCS61266.2024.00012},
	publisher = {{IEEE}},
	title = {First-Order Model Checking on Monadically Stable Graph Classes},
	year = {2024},
	bdsk-url-1 = {https://doi.org/10.1109/FOCS61266.2024.00012}}

@inproceedings{flip-width,
	author = {Szymon Toru{\'n}czyk},
	bibsource = {dblp computer science bibliography, https://dblp.org},
	biburl = {https://dblp.org/rec/conf/focs/Torunczyk23.bib},
	booktitle = {64th {IEEE} Annual Symposium on Foundations of Computer Science, {FOCS} 2023},
	doi = {10.1109/FOCS57990.2023.00045},
	pages = {663--700},
	publisher = {{IEEE}},
	timestamp = {Tue, 02 Jan 2024 15:09:54 +0100},
	title = {Flip-width: Cops and Robber on dense graphs},
	url = {https://doi.org/10.1109/FOCS57990.2023.00045},
	year = {2023},
	bdsk-url-1 = {https://doi.org/10.1109/FOCS57990.2023.00045}}

@article{flip-width-arxiv,
	author = {Szymon Toru{\'n}czyk},
	bibsource = {dblp computer science bibliography, https://dblp.org},
	eprint = {2302.00352},
	eprinttype={arXiv},
	journal = {arXiv preprint},
	timestamp = {Fri, 10 Feb 2023 12:26:38 +0100},
	title = {Flip-width: Cops and Robber on dense graphs},
	year = {2023}}

@article{clique-width,
	abstract = {We introduce the handle-rewriting hypergraph grammars (HH grammars), based on the replacement of handles, i.e., of subhypergraphs consisting of one hyperedge together with its incident vertices. This extends hyperedge replacement, where only the hyperedge is replaced. A HH grammar is separated (an S-HH grammar) if nonterminal handles do not overlap. The S-HH grammars are context-free, and the sets they generate can be characterized as the least solutions of certain systems of equations. They generate the same sets of graphs as the NLC-like vertex-rewriting C-edNCE graph grammars that are also context-free.},
	author = {Bruno Courcelle and Joost Engelfriet and Grzegorz Rozenberg},
	date-added = {2023-04-02 21:52:46 +0200},
	date-modified = {2023-04-02 21:52:46 +0200},
	doi = {10.1016/0022-0000(93)90004-G},
	issn = {0022-0000},
	journal = {Journal of Computer and System Sciences},
	number = {2},
	pages = {218-270},
	title = {Handle-rewriting hypergraph grammars},
	volume = {46},
	year = {1993},
	bdsk-url-1 = {https://www.sciencedirect.com/science/article/pii/002200009390004G},
	bdsk-url-2 = {https://doi.org/10.1016/0022-0000(93)90004-G}}

@article{robertson-seymour-tw,
	abstract = {The ``tree-width'' of a graph is defined and it is proved that for any fixed planar graph H, every planar graph with sufficiently large tree-width has a minor isomorphic to H. This result has several applications which are described in other papers in this series.},
	author = {Neil Robertson and Paul Seymour},
	date-added = {2023-04-02 21:52:08 +0200},
	date-modified = {2023-04-02 21:52:08 +0200},
	doi = {10.1016/0095-8956(84)90013-3},
	issn = {0095-8956},
	journal = {Journal of Combinatorial Theory, Series B},
	number = {1},
	pages = {49-64},
	title = {Graph minors. III. Planar tree-width},
	volume = {36},
	year = {1984},
	bdsk-url-1 = {https://www.sciencedirect.com/science/article/pii/0095895684900133},
	bdsk-url-2 = {https://doi.org/10.1016/0095-8956(84)90013-3}}

@inproceedings{tww3,
	address = {Dagstuhl, Germany},
	annote = {Keywords: Twin-width, Max Independent Set, Min Dominating Set, Coloring, Parameterized Algorithms, Approximation Algorithms, Exact Algorithms},
	author = {Bonnet, \'{E}douard and Geniet, Colin and Kim, Eun Jung and Thomass\'{e}, St\'{e}phan and Watrigant, R\'{e}mi},
	booktitle = {48th International Colloquium on Automata, Languages, and Programming (ICALP 2021)},
	date-added = {2023-04-02 21:10:49 +0200},
	date-modified = {2023-04-02 21:10:49 +0200},
	doi = {10.4230/LIPIcs.ICALP.2021.35},
	editor = {Bansal, Nikhil and Merelli, Emanuela and Worrell, James},
	isbn = {978-3-95977-195-5},
	issn = {1868-8969},
	pages = {35:1--35:20},
	publisher = {Schloss Dagstuhl -- Leibniz-Zentrum f{\"u}r Informatik},
	series = {Leibniz International Proceedings in Informatics (LIPIcs)},
	title = {{Twin-width III: Max Independent Set, Min Dominating Set, and Coloring}},
	urn = {urn:nbn:de:0030-drops-141044},
	volume = {198},
	year = {2021},
	bdsk-url-1 = {https://drops.dagstuhl.de/opus/volltexte/2021/14104},
	bdsk-url-2 = {https://doi.org/10.4230/LIPIcs.ICALP.2021.35}}

@inproceedings{tww6,
	author = {{\'{E}}douard Bonnet and Eun Jung Kim and Amadeus Reinald and St{\'{e}}phan Thomass{\'{e}}},
	bibsource = {dblp computer science bibliography, https://dblp.org},
	biburl = {https://dblp.org/rec/conf/soda/BonnetKRT22.bib},
	booktitle = {Proceedings of the 2022 {ACM-SIAM} Symposium on Discrete Algorithms, {SODA} 2022},
	date-added = {2023-04-02 21:10:49 +0200},
	date-modified = {2025-02-22 12:24:07 +0100},
	doi = {10.1137/1.9781611977073.45},
	editor = {Joseph (Seffi) Naor and Niv Buchbinder},
	location = {Virtual Conference / Alexandria, VA, USA},
	pages = {1036--1056},
	publisher = {{SIAM}},
	timestamp = {Tue, 12 Apr 2022 11:24:57 +0200},
	title = {Twin-width {VI:} the lens of contraction sequences},
	year = {2022},
	bdsk-url-1 = {https://doi.org/10.1137/1.9781611977073.45}}

@inproceedings{tww-types-icalp,
	author = {Jakub Gajarsk{\'{y}} and Micha{\l} Pilipczuk and Wojciech Przybyszewski and Szymon Toru{\'n}czyk},
	bibsource = {dblp computer science bibliography, https://dblp.org},
	biburl = {https://dblp.org/rec/conf/icalp/GajarskyPPT22.bib},
	booktitle = {49th International Colloquium on Automata, Languages, and Programming, {ICALP} 2022},
	doi = {10.4230/LIPIcs.ICALP.2022.123},
	editor = {Mikolaj Bojanczyk and Emanuela Merelli and David P. Woodruff},
	pages = {123:1--123:21},
	publisher = {Schloss Dagstuhl - Leibniz-Zentrum f{\"{u}}r Informatik},
	series = {LIPIcs},
	timestamp = {Wed, 07 Dec 2022 23:08:35 +0100},
	title = {Twin-Width and Types},
	volume = {229},
	year = {2022},
	bdsk-url-1 = {https://doi.org/10.4230/LIPIcs.ICALP.2022.123}}

@article{DvorakKT13-journal,
	author = {Zdenek Dvor{\'{a}}k and Daniel Kr{\'{a}}l and Robin Thomas},
	bibsource = {dblp computer science bibliography, https://dblp.org},
	biburl = {https://dblp.org/rec/journals/jacm/DvorakKT13.bib},
	doi = {10.1145/2499483},
	journal = {J. {ACM}},
	number = {5},
	pages = {36:1--36:24},
	timestamp = {Thu, 12 Sep 2019 08:30:26 +0200},
	title = {Testing first-order properties for subclasses of sparse graphs},
	volume = {60},
	year = {2013},
	bdsk-url-1 = {https://doi.org/10.1145/2499483}}

@inproceedings{flippers,
	author = {Jakub Gajarsk{\'{y}} and Nikolas M{\"{a}}hlmann and Rose McCarty and Pierre Ohlmann and Micha{\l} Pilipczuk and Wojciech Przybyszewski and Sebastian Siebertz and Marek Soko{\l}owski and Szymon Toru{\'n}czyk},
	bibsource = {dblp computer science bibliography, https://dblp.org},
	biburl = {https://dblp.org/rec/conf/icalp/GajarskyMMOPPSS23.bib},
	booktitle = {50th International Colloquium on Automata, Languages, and Programming, {ICALP} 2023},
	doi = {10.4230/LIPIcs.ICALP.2023.128},
	editor = {Kousha Etessami and Uriel Feige and Gabriele Puppis},
	pages = {128:1--128:16},
	publisher = {Schloss Dagstuhl - Leibniz-Zentrum f{\"{u}}r Informatik},
	series = {LIPIcs},
	timestamp = {Thu, 31 Aug 2023 14:56:44 +0200},
	title = {Flipper Games for Monadically Stable Graph Classes},
	volume = {261},
	year = {2023},
	bdsk-url-1 = {https://doi.org/10.4230/LIPIcs.ICALP.2023.128}}

@article{lsd-journal,
	author = {Jakub Gajarsk{\'{y}} and Stephan Kreutzer and Jaroslav Ne{\v s}et{\v r}il and Patrice Ossona de Mendez and Micha{\l} Pilipczuk and Sebastian Siebertz and Szymon Toru{\'n}czyk},
	bibsource = {dblp computer science bibliography, https://dblp.org},
	biburl = {https://dblp.org/rec/journals/tocl/GajarskyKNMPST20.bib},
	doi = {10.1145/3382093},
	journal = {{ACM} Trans. Comput. Log.},
	number = {4},
	pages = {29:1--29:41},
	timestamp = {Sun, 02 Oct 2022 15:51:45 +0200},
	title = {First-Order Interpretations of Bounded Expansion Classes},
	volume = {21},
	year = {2020},
	bdsk-url-1 = {https://doi.org/10.1145/3382093}}

@article{shelah-sauer-lemma,
	author = {Saharon Shelah},
	doi = {pjm/1102968432},
	journal = {Pacific Journal of Mathematics},
	number = {1},
	pages = {247 -- 261},
	publisher = {Pacific Journal of Mathematics, A Non-profit Corporation},
	title = {{A combinatorial problem; stability and order for models and theories in infinitary languages.}},
	volume = {41},
	year = {1972},
	bdsk-url-1 = {https://doi.org/},
	bdsk-url-2 = {https://doi.org/pjm/1102968432}}

@article{sauer,
	abstract = {If T is a family of sets and A some set we denote by T ∩ A the following family of subsets of A: T ∩ A = {F ∩ A; F ϵ T}. P. Erd{\"o}s (oral communication) transmitted to me in Nice the following question: Is it true that if T is a family of subsets of some infinite set S then either there exists to each number n a set A ⊂ S with |A| = n such that |T ∩ A| = 2n or there exists some number N such that |T ∩ A| ⩽ |A|c for each A ⊂ S with |A| ⩾ N and some constant c? In this paper we will answer this question in the affirmative by determining the exact upper bound. (Theorem 2).1},
	author = {Norbert Sauer},
	doi = {10.1016/0097-3165(72)90019-2},
	issn = {0097-3165},
	journal = {Journal of Combinatorial Theory, Series A},
	number = {1},
	pages = {145-147},
	title = {On the density of families of sets},
	volume = {13},
	year = {1972},
	bdsk-url-1 = {https://www.sciencedirect.com/science/article/pii/0097316572900192},
	bdsk-url-2 = {https://doi.org/10.1016/0097-3165(72)90019-2}}

@inproceedings{tww2,
	author = {{\'{E}}douard Bonnet and Colin Geniet and Eun Jung Kim and St{\'{e}}phan Thomass{\'{e}} and R{\'{e}}mi Watrigant},
	bibsource = {dblp computer science bibliography, https://dblp.org},
	biburl = {https://dblp.org/rec/conf/soda/BonnetGKTW21.bib},
	booktitle = {Proceedings of the 2021 {ACM-SIAM} Symposium on Discrete Algorithms, {SODA} 2021},
	date-modified = {2025-02-22 12:23:24 +0100},
	doi = {10.1137/1.9781611976465.118},
	editor = {D{\'{a}}niel Marx},
	location = {Virtual Conference},
	pages = {1977--1996},
	publisher = {{SIAM}},
	timestamp = {Thu, 15 Jul 2021 13:49:05 +0200},
	title = {Twin-width {II:} small classes},
	year = {2021},
	bdsk-url-1 = {https://doi.org/10.1137/1.9781611976465.118}}

@article{dvorak-admissibility,
	abstract = {The k-domination number of a graph is the minimum size of a set D such that every vertex of G is at distance at most k from D. We give a linear-time constant-factor algorithm for approximation of the k-domination number in classes of graphs with bounded expansion, which include e.g. proper minor-closed graph classes, proper classes closed on topological minors and classes of graphs that can be drawn on a fixed surface with bounded number of crossings on each edge. The algorithm is based on the following approximate min--max characterization. A subset A of vertices of a graph G is d-independent if the distance between each two vertices in A is greater than d. Note that the size of the largest 2k-independent set is a lower bound for the k-domination number. We show that every graph from a fixed class with bounded expansion contains a 2k-independent set A and a k-dominating set D such that |D|=O(|A|), and these sets can be found in linear time. For a fixed value of k, the assumptions on the class can be formulated more precisely in terms of generalized coloring numbers. In particular, for the domination number (k=1), the results hold for all graph classes with arrangeability bounded by a constant.},
	author = {Zden{\v e}k Dvo{\v r}{\'a}k},
	doi = {10.1016/j.ejc.2012.12.004},
	issn = {0195-6698},
	journal = {European Journal of Combinatorics},
	number = {5},
	pages = {833-840},
	title = {Constant-factor approximation of the domination number in sparse graphs},
	volume = {34},
	year = {2013},
	bdsk-url-1 = {https://www.sciencedirect.com/science/article/pii/S0195669812002028},
	bdsk-url-2 = {https://doi.org/10.1016/j.ejc.2012.12.004}}

@article{zhu2009colouring,
	author = {Zhu, Xuding},
	doi = {10.1016/j.disc.2008.03.024},
	journal = {Discrete Mathematics},
	number = {18},
	pages = {5562--5568},
	publisher = {Elsevier},
	title = {Colouring graphs with bounded generalized colouring number},
	volume = {309},
	year = {2009},
	bdsk-url-1 = {https://doi.org/10.1016/j.disc.2008.03.024}}

@inproceedings{flip-breakability,
	author = {Jan Dreier and Nikolas M{\"{a}}hlmann and Szymon Toru{\'n}czyk},
	bibsource = {dblp computer science bibliography, https://dblp.org},
	biburl = {https://dblp.org/rec/conf/stoc/DreierMT24.bib},
	booktitle = {Proceedings of the 56th Annual {ACM} Symposium on Theory of Computing, {STOC} 2024},
	doi = {10.1145/3618260.3649739},
	editor = {Bojan Mohar and Igor Shinkar and Ryan O'Donnell},
	pages = {1550--1560},
	publisher = {{ACM}},
	timestamp = {Tue, 18 Jun 2024 09:24:05 +0200},
	title = {Flip-Breakability: {A} Combinatorial Dichotomy for Monadically Dependent Graph Classes},
	url = {https://doi.org/10.1145/3618260.3649739},
	year = {2024},
	bdsk-url-1 = {https://doi.org/10.1145/3618260.3649739}}

@book{sparsity-book,
	author = {Jaroslav Ne{\v s}et{\v r}il and Patrice Ossona de Mendez},
	bibsource = {dblp computer science bibliography, https://dblp.org},
	biburl = {https://dblp.org/rec/books/daglib/0030491.bib},
	doi = {10.1007/978-3-642-27875-4},
	isbn = {978-3-642-27874-7},
	publisher = {Springer},
	series = {Algorithms and combinatorics},
	timestamp = {Wed, 14 Nov 2018 10:12:21 +0100},
	title = {Sparsity - Graphs, Structures, and Algorithms},
	volume = {28},
	year = {2012},
	bdsk-url-1 = {https://doi.org/10.1007/978-3-642-27875-4}}

@article{seymour-thomas-cops,
	abstract = {The tree-width of a graph G is the minimum k such that G may be decomposed into a "tree-structure" of pieces each with at most k + l vertices. We prove that this equals the maximum k such that there is a collection of connected subgraphs, pairwise intersecting or adjacent, such that no set of ≤ k vertices meets all of them. A corollary is an analogue of LaPaugh′s "monotone search" theorem for cops trapping a robber they can see (LaPaugh′s robber was invisible).},
	author = {Paul Seymour and Robin Thomas},
	doi = {10.1006/jctb.1993.1027},
	issn = {0095-8956},
	journal = {Journal of Combinatorial Theory, Series B},
	number = {1},
	pages = {22-33},
	title = {Graph Searching and a Min-Max Theorem for Tree-Width},
	volume = {58},
	year = {1993},
	bdsk-url-1 = {https://www.sciencedirect.com/science/article/pii/S0095895683710270},
	bdsk-url-2 = {https://doi.org/10.1006/jctb.1993.1027}}

@article{gks,
	author = {Martin Grohe and Stephan Kreutzer and Sebastian Siebertz},
	bibsource = {dblp computer science bibliography, https://dblp.org},
	biburl = {https://dblp.org/rec/journals/jacm/GroheKS17.bib},
	doi = {10.1145/3051095},
	journal = {J. {ACM}},
	number = {3},
	pages = {17:1--17:32},
	timestamp = {Sat, 08 Jan 2022 02:23:32 +0100},
	title = {Deciding First-Order Properties of Nowhere Dense Graphs},
	volume = {64},
	year = {2017},
	bdsk-url-1 = {https://doi.org/10.1145/3051095}}

@inproceedings{tww1,
	author = {{\'{E}}douard Bonnet and Eun Jung Kim and St{\'{e}}phan Thomass{\'{e}} and R{\'{e}}mi Watrigant},
	bibsource = {dblp computer science bibliography, https://dblp.org},
	biburl = {https://dblp.org/rec/conf/focs/Bonnet0TW20.bib},
	booktitle = {61st {IEEE} Annual Symposium on Foundations of Computer Science, {FOCS} 2020},
	doi = {10.1109/FOCS46700.2020.00062},
	editor = {Sandy Irani},
	pages = {601--612},
	publisher = {{IEEE}},
	timestamp = {Wed, 15 Dec 2021 10:49:16 +0100},
	title = {Twin-width {I:} tractable {FO} model checking},
	year = {2020},
	bdsk-url-1 = {https://doi.org/10.1109/FOCS46700.2020.00062}}

@inproceedings{tww4,
	abstract = {We establish a list of characterizations of bounded twin-width for hereditary classes of totally ordered graphs: as classes of at most exponential growth studied in enumerative combinatorics, as monadically NIP classes studied in model theory, as classes that do not transduce the class of all graphs studied in finite model theory, and as classes for which model checking first-order logic is fixed-parameter tractable studied in algorithmic graph theory. This has several consequences. First, it allows us to show that every hereditary class of ordered graphs either has at most exponential growth, or has at least factorial growth. This settles a question first asked by Balogh, Bollob\'{a}s, and Morris [Eur. J. Comb. '06] on the growth of hereditary classes of ordered graphs, generalizing the Stanley-Wilf conjecture/Marcus-Tardos theorem. Second, it gives a fixed-parameter approximation algorithm for twin-width on ordered graphs. Third, it yields a full classification of fixed-parameter tractable first-order model checking on hereditary classes of ordered binary structures. Fourth, it provides a model-theoretic characterization of classes with bounded twin-width. Finally, it settles our small conjecture [SODA '21] in the case of ordered graphs.},
	address = {New York, NY, USA},
	author = {Bonnet, \'{E}douard and Giocanti, Ugo and Ossona de Mendez, Patrice and Simon, Pierre and Thomass\'{e}, St\'{e}phan and Toru\'{n}czyk, Szymon},
	booktitle = {Proceedings of the 54th Annual ACM SIGACT Symposium on Theory of Computing, STOC 2022},
	doi = {10.1145/3519935.3520037},
	isbn = {9781450392648},
	keywords = {matrices, Ramsey theory, enumerative combinatorics, computational complexity, algorithms, ordered graphs, model theory, Twin-width},
	location = {Rome, Italy},
	numpages = {14},
	pages = {924--937},
	publisher = {ACM},
	title = {Twin-Width IV: Ordered Graphs and Matrices},
	year = {2022},
	bdsk-url-1 = {https://doi.org/10.1145/3519935.3520037}}

@article{Dvorak18,
	author = {Zdenek Dvo\v{r}{\'{a}}k},
	bibsource = {dblp computer science bibliography, https://dblp.org},
	biburl = {https://dblp.org/rec/journals/ejc/Dvorak18.bib},
	doi = {10.1016/j.ejc.2017.10.004},
	journal = {Eur. J. Comb.},
	pages = {143--148},
	timestamp = {Fri, 30 Nov 2018 13:28:36 +0100},
	title = {Induced subdivisions and bounded expansion},
	volume = {69},
	year = {2018},
	bdsk-url-1 = {https://doi.org/10.1016/j.ejc.2017.10.004}}

@article{NesetrilM11a,
	author = {Jaroslav Ne\v{s}et\v{r}il and Patrice {Ossona de Mendez}},
	bibsource = {dblp computer science bibliography, https://dblp.org},
	biburl = {https://dblp.org/rec/journals/ejc/NesetrilM11a.bib},
	doi = {10.1016/j.ejc.2011.01.006},
	journal = {Eur. J. Comb.},
	number = {4},
	pages = {600--617},
	timestamp = {Fri, 12 Feb 2021 13:41:34 +0100},
	title = {On nowhere dense graphs},
	volume = {32},
	year = {2011},
	bdsk-url-1 = {https://doi.org/10.1016/j.ejc.2011.01.006}}

@inbook{Shelah1986,
	abstract = {This is part of the classification developed in Baldwin Shelah [BSh]. The paper is divided into two parts. In part I we show that (T∞, 2nd)≨(T, mon) iff the Hanf number for the theory T in monadic logic is smaller than the Hanf number of second order logic.},
	address = {Berlin, Heidelberg},
	author = {Shelah, Saharon},
	booktitle = {Around Classification Theory of Models},
	doi = {10.1007/BFb0098511},
	isbn = {978-3-540-39788-5},
	pages = {203--223},
	publisher = {Springer Berlin Heidelberg},
	title = {Monadic logic: Hanf Numbers},
	year = {1986},
	bdsk-url-1 = {https://doi.org/10.1007/BFb0098511}}

@article{grad-and-bounded-expansion-Nesetril,
	abstract = {We introduce classes of graphs with bounded expansion as a generalization of both proper minor closed classes and degree bounded classes. Such classes are based on a new invariant, the greatest reduced average density (grad) of G with rank r, ∇r(G). For these classes we prove the existence of several partition results such as the existence of low tree-width and low tree-depth colorings. This generalizes and simplifies several earlier results (obtained for minor closed classes).},
	author = {Jaroslav Ne{\v s}et{\v r}il and Patrice {Ossona de Mendez}},
	doi = {10.1016/j.ejc.2006.07.013},
	issn = {0195-6698},
	journal = {European Journal of Combinatorics},
	number = {3},
	pages = {760-776},
	title = {Grad and classes with bounded expansion I. Decompositions},
	volume = {29},
	year = {2008},
	bdsk-url-1 = {https://www.sciencedirect.com/science/article/pii/S019566980700056X},
	bdsk-url-2 = {https://doi.org/10.1016/j.ejc.2006.07.013}}

\end{document}